\newcommand\blfootnote[1]{%
	\begingroup
	\renewcommand\thefootnote{}\footnote{#1}%
	\addtocounter{footnote}{-1}%
	\endgroup
}
\newtheorem{theorem}{Theorem}[section]
\newtheorem{proposition}[theorem]{Proposition}
\newtheorem{lemma}[theorem]{Lemma}
\newtheorem{corrolary}[theorem]{Corollary}
\theoremstyle{remark}
\newtheorem*{remark}{Remark}
\theoremstyle{definition}
\newtheorem{definition}{Definition}
\numberwithin{equation}{section}
\DeclarePairedDelimiter\abs{\lvert}{\rvert}
\DeclarePairedDelimiter\norm{\lVert}{\rVert}
\newcommand{\normB}[1]{\norm{#1}_{\dot{\mathcal{B}}}}
\newcommand{\normH}[2]{\norm{#2}_{\dot{H}_{#1}}}
\newcommand{\normN}[1]{\norm{#1}_{L_\infty}}
\renewcommand{\S}{Section}
\DeclareMathOperator{\tr}{Tr}
\DeclareMathOperator{\supp}{supp}
\DeclareMathOperator{\conf}{Conf}
\newcommand{\R}{\mathbb{R}}
\newcommand{\N}{\mathbb{N}}
\newcommand{\C}{\mathbb{C}}
\begin{document}
\title{Speed of convergence in the Central Limit Theorem for the determinantal point process with the Bessel kernel}
\date{}
\author{Sergei M. Gorbunov\blfootnote{Moscow Institute of Physics and Technology, Dolgoprudny, Russia}\footnote{Institute for System Programming of the Russian Academy of Sciences, Moscow, Russia}}
\maketitle
\begin{abstract}
	We consider a family of linear operators, diagonalized by the Hankel transform. The Fredholm determinants of these operators, restricted to $L_2[0, R]$, are expressed in a convenient form for asymptotic analysis as $R\to\infty$. The result is an identity, in which the determinant is equal to the leading asymptotic multiplied by an asymptotically small factor, for which an explicit formula is derived. We apply the result to the determinantal point process with the Bessel kernel, calculating the speed of the convergence of additive functionals with respect to the Kolmogorov-Smirnov metric.
\end{abstract}

\section{Introduction}
For $f\in L_\infty(\R_+)\cap L_1(\R_+)$ consider the kernel 
\begin{equation}\label{1:bess_def}
	B_f(x, y) = \int_{\R_+}t\sqrt{xy}J_\nu(xt)J_\nu(yt)f(t)dt,
\end{equation}
where $J_\nu$ is a Bessel function of order $\nu$. Let us fix $\nu> -1$. 
The kernel \eqref{1:bess_def} induces a bounded linear operator on 
$L_2(\R_+)$, which we will also denote by $B_f$. In addition, we let $B_1$ 
be the identity operator. We will refer to the kernel \eqref{1:bess_def} 
and the operator $B_f$ as the Bessel kernel and the Bessel operator.

For any $h\in L_\infty(X)$ we also let $h$ stand for the operator 
of pointwise multiplication on $L_2(X)$. Let $\chi_A(x)$ be the 
characteristic function of the subset $A$. 

In the following work we consider the Fredholm determinant 
$\det(I + \chi_{[0, R]}B_{f}\chi_{[0, R]})$ for $R>0$ and 
sufficiently smooth function $f$.
The determinant gives an exact expression for the Laplace 
transform of additive functionals in the determinantal point 
process with the Bessel kernel (see~\cite{Tracy_1994}). We 
describe the relation between the determinantal point process 
and the operator in \S~5. Briefly, the operator 
$B_{\chi_{[0, 1]}}$ induces a probability measure 
$\mathbb{P}_{J_\nu}$ on countable subsets of $\R_+$ without 
accumulation points. The subsets, called configurations, are 
endowed with certain $\sigma$-algebra (see~\cite{Soshnikov_2000}). 
For any measurable function $b$ on $\R_+$ the additive 
functional is defined to be a measurable function on configurations 
by the formula $S_b(X) = \sum_{x\in X}b(x)$. The probability
 measure induces the random variable $S_b$. For 
 $b\in L_1(\R_+)\cap L_\infty(\R_+)$ we have that the 
 Laplace transform of $S_b$ is expressed by the formula
\[
	\mathbb{E}_{J_\nu}e^{\lambda S_b} = 
	\det(\chi_{[0, 1]}B_{e^{\lambda b}}\chi_{[0, 1]}).
\]

For the function $b$ we consider the additive
functional $S_b^R = S_{b(x/R)}$ as $R\to\infty$. 
For the dilated function we have 
\[
	\det(\chi_{[0, 1]}B_{e^{\lambda b(x/R)}}\chi_{[0, 1]}) 
	= \det(\chi_{[0, R]}B_{e^{\lambda b}}\chi_{[0, R]}).
\]
We conclude that the limit distribution of $S_b^R$ 
can be derived via asymptotic of the determinant.

Let us recall an analogous problem for the sine 
process $\mathbb{P}_{\mathcal{S}}$. Unlike the Bessel
 kernel determinantal point process, it is a measure 
 on configurations on the real line $\R$. The measure 
 is induced by an operator on $L_2(\R)$ with the integral 
 kernel $K_{\mathcal{S}}(x, y) = \frac{\sin \pi(x-y)}{\pi(x-y)}$. 
 The formula for the Laplace transform for 
 $b\in L_1(\R)\cap L_\infty(\R)$ in this case is
\[
	\mathbb{E}_{\mathcal{S}}e^{\lambda S_b^R} = 
	\det(\chi_{[0, 2\pi R]}W_{e^{\lambda b}}\chi_{[0, 2\pi R]}),
\]
where $W_{e^{\lambda b}}$ is a Wiener-Hopf operator with 
symbol $e^{\lambda b}$, which we define in the following section. 
For these determinants we recall two results. First one is the 
Kac-Akhiezer formula \cite[Sect.~10.13]{Bttcher1990}, which 
states that under certain conditions on $b$ we have
\[
	\det(\chi_{[0, 2\pi R]}W_{e^{\lambda b}}\chi_{[0, 2\pi R]}) = 
	\exp(\lambda R c_1^{\mathcal{S}}(b) + \lambda^2 
	c_2^{\mathcal{S}}(b))Q^{\mathcal{S}}_R(\lambda b),
\]
where $ Q^{\mathcal{S}}_R(\lambda b)\to 1$ as $R\to\infty$. See Theorem \ref{2:BO_identity} for the values of $c_1^{\mathcal{S}}(b),
 c_2^{\mathcal{S}}(b)$. We note that the distribution approaches Gaussian 
 if $c^{\mathcal{S}}_1(b)=0$. The second result is an exact formula for 
 the remainder term $Q^{\mathcal{S}}_R(\lambda b)$, see again Theorem 
 \ref{2:BO_identity}. These results are continuous analogues of the 
 Strong Szeg\"{o} Limit Theorem and the Borodin-Okounkov identity for 
 Toeplitz matrices (see~\cite[Sect.~10.4]{Bttcher1990} for the former 
 and~\cite{borodin1999fredholm} for the latter result). Borodin and 
 Okounkov first derived the expression for determinants of Toeplitz 
 matrices via the Gessel theorem and Schur measures. Several different 
 proves under less restrictive assertions were given later 
 \cite{BasorWidom2000}, \cite{boettcher2001determinant}. 
 These approaches may be applied in the continuous case, 
 it was done by Basor and Chen in \cite{BChen2003}. Another proof in
 the continuous case under weaker assumptions was given by Bufetov in \cite{Buf2024}.

Having derived a formula for $Q^{\mathcal{S}}_R(\lambda b)$, it 
is possible to estimate the speed of the convergence of $S_b^R$, 
which was done by Bufetov in \cite[Lemma~3.10]{bufetov2019sineprocess}. 
Theorem \ref{1:KS_estimate} gives a similar estimate for the Bessel 
kernel point process.

An analogue of the Kac-Akhiezer formula in case of the Bessel kernel 
is derived by Basor and Ehrhardt in \cite{Basor_2003}. We state it in Theorem  \ref{5:bessel_asymp}.
Again, the distribution of additive functionals approaches Gaussian 
if $c_1^{\mathcal{B}}(b)=0$. In this paper we will derive an exact 
expression for $Q^{\mathcal{B}}_R(\lambda b)$, therefore proving an 
analogue of the exact identity, and estimate its speed of convergence. 

The result has two special cases $\nu=\pm 1/2$. In these cases 
the Bessel operator \eqref{1:bess_def} is known to be the sum 
of the Wiener-Hopf and Hankel operators:
\[
B_f = W_f \pm H_f, \quad \text{if }\nu=\mp 1/2,
\]
where $H_f$ is a Hankel integral operator with kernel 
$\chi_{[0, \infty)^2}(x, y)\hat{f}(x+y)$. The value of $Q^{\mathcal{B}}_R(b)$ 
for these cases was derived by Basor, Ehrhardt and Widom in \cite{BEW_2003}. 
Our result --- Theorem \ref{1:BO_bessel} --- reproduces their formula. 
It is notable that in these cases $B_f$ has a discrete analogue: a sum of 
Toeplitz and Hankel matrices. An analogue of the Szeg\"{o} theorem for the 
latter has been proved by Johansson \cite{Joh1997}. Furthermore, there is
 a counterpart of the Borodin-Okounkov formula for these matrices, derived 
 by Basor and Ehrhardt in \cite{BE_2008} via operator-theoretic methods. 
 Another proof, similar to the one of Borodin and Okounkov for Toeplitz 
 matrices, has been given by Betea \cite{Betea_2018}, who used symplectic 
 and orthogonal Schur measures.

Lastly, let us recall that an analogous problem of rate of convergence 
of additive functionals has already been considered for random matrix 
ensembles. E. g., classical compact groups were studied by Johansson 
in \cite{Joh1997}. In \cite{KMS2010} authors considered traces of 
random Haar-distributed matrices multiplied by a deterministic one. 
Lambert, Ledoux and Webb studied the speed of the convergence in 
$\beta$-ensembles with respect to the Wasserstein-2 distance in 
\cite{LLW2019}. Gaussian Laguerre and Jacobi ensembles were 
studied by Bufetov and Berezin in \cite{BerBuf2021} via Deift-Zhou
 asymptotic analysis of Riemann-Hilbert problem \cite{RH}. The authors of \cite{Zhig2021} studied convergence of additive functionals in Wigner ensembles. However, 
 the mentioned ensembles are \\finite-dimensional, with number of 
 points growing in the considered limit. On the other hand, the 
 Bessel kernel point process is a measure on infinite configurations. 
 Therefore, we pursue the operator-theoretic approach used in 
 \cite{BChen2003} and \cite{Basor_2003}.

\section{Statement of the result}
For a function $f\in L_1(\R)\cap L_\infty(\R)$ 
define the Fourier transform by the formula
\[
	\hat{f}(\lambda) = \frac{1}{2\pi}\int_\R e^{-i\lambda x}f(x)dx.
\]
Here and subsequently if $f$ is defined on $\R_+$ we extend it evenly. 
In this case the Fourier transform coincides with the cosine transform.
Define the Sobolev $p$-seminorm of a function $f$ on $\R_+$ as follows
\[
	\normH{p}{f}^2 = \int_{\R_+} \abs{\hat{f}(\lambda)}^2
	\abs{\lambda}^{2p}d\lambda.
\]
For an integer $p$ it is equal to $(2\pi)^{-1/2}\norm{f^{(p)}}_{L_2(\R_+)}$ 
by the Parseval theorem. Denote $\norm{f}_{H_p} = \norm{f}_{L_2(\R_+)} + 
\normH{p}{f}$ and define the Sobolev space $H_p(\R_+) = \{f\in L_2(\R_+): 
\norm{f}_{H_p}<+\infty\}$ to be a Banach space with the norm 
$\norm{\cdot}_{H_p}$. The Sobolev space $H_p(\R)$ on the real line 
is defined similarly, we keep the same notation for norms.

Define the following seminorm for a function $f$ on $\R_+$
\[
	\normB{f} = \normH{1}{f} + \normH{3}{f} + 
	\normH{2}{xf(x)} + \normH{3}{x^2f(x)}.
\]
Again denote $\norm{f}_{\mathcal{B}} = \norm{f}_{L_1(\R_+)} + 
\norm{f}_{L_2(\R_+)} + \norm{xf(x)}_{L_\infty(\R_+)} + \normB{f}$ 
and define the space 
\[
\mathcal{B} = \{f\in L_2(\R_+):\norm{f}_{\mathcal{B}}<+\infty\}
\]
to be a Banach space with the norm $\norm{\cdot}_{\mathcal{B}}$.

For a function $f \in L_\infty(\R)$ the Wiener-Hopf operator 
on $L_2(\R_+)$ is defined by the formula
\begin{equation*}
	W_f = \chi_{\R_+}\mathcal{F}f\mathcal{F}^{-1}\chi_{\R_+},
\end{equation*}
where $\mathcal{F}$ is the unitary Fourier transform on $L_2(\R)$: 
$\mathcal{F}h = \sqrt{2\pi}\hat{h}$, $h\in L_2(\R)$. Again, if $f$ is defined 
on $\R_+$, extend it evenly.

Denote 
\[
P_\pm = \mathcal{F}^{-1}\chi_{\R_\pm}\mathcal{F},\quad H_1^\pm(\R) = P_\pm(H_1(\R)),
\]
Then $H_1(\R) = H_1^+(\R)\oplus H_1^-(\R)$. For any function $b\in H_1(\R)$ 
denote $b_\pm = P_\pm b$, $b = b_+ + b_-$. The introduced space $\mathcal{B}$ 
is clearly a subspace of $H_1(\R)$ with embedding given by even continuation, 
so the decomposition is well defined on $\mathcal{B}$, although the components 
may fail to remain in $\mathcal{B}$. We will refer to such decomposition 
as to Wiener-Hopf factorization.

The main result is then stated as follows.
\begin{theorem}\label{1:BO_bessel}
	Let $b \in \mathcal{B}$. We have for any $R>0$
	\begin{equation}\label{1:bessel_ident}
		\det(\chi_{[0, R]}B_{e^{b}}\chi_{[0, R]}) = 
		\exp(Rc_1^{\mathcal{B}}(b) + c_2^{\mathcal{B}}(b) 
		+ c_3^{\mathcal{B}}(b))Q_R^{\mathcal{B}}(b),
	\end{equation}
	where
	\begin{align*}
		&c_1^{\mathcal{B}}(b) = \hat{b}(0),\\
		&c_2^{\mathcal{B}}(b) =  - \frac{\nu}{2}b(0),\\
		&c_3^{\mathcal{B}}(b) = \frac{1}{2}\int_{\mathbb{R}_+}x(\hat{b}(x))^2dx,\\
		&Q^{\mathcal{B}}_R(b) = \det(\chi_{[R, \infty)} 
		W_{e^{b_-}}B_{e^{-b}}W_{e^{b_+}}\chi_{[R, \infty)}).
	\end{align*}
	There exists a constant $C>0$ such that for any 
	$R\ge 1$ and $b\in\mathcal{B}$ the following estimate holds
	\begin{equation}\label{1:bessel_speed}
		\abs{Q^{\mathcal{B}}_R(b) - 1} \le 
		\frac{Ce^{4\normN{b_+}}}{\sqrt{R}}L(b)\exp\left (
		\frac{Ce^{4\normN{b_+}}}{\sqrt{R}}L(b)\right),
	\end{equation}
	where
	\[
	L(b) = (1 + \normN{xb'(x)}^2 + \normB{b}^2)\normB{b}.
	\]
\end{theorem}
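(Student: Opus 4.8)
The backbone of the argument is the algebraic structure of the Bessel operator. Writing $\mathcal{H}_\nu$ for the unitary, involutive Hankel transform that diagonalizes \eqref{1:bess_def}, one has $B_f=\mathcal{H}_\nu f\mathcal{H}_\nu$, so that $B_fB_g=B_{fg}$; in particular $B_{e^b}$ is positive with inverse $B_{e^{-b}}$, and it factors multiplicatively as $B_{e^b}=B_{e^{b_-}}B_{e^{b_+}}$. This is to be played against the Wiener-Hopf factors $W_{e^{b_\pm}}$, which are invertible with $W_{e^{b_+}}^{-1}=W_{e^{-b_+}}$ and $W_{e^{b_-}}^{-1}=W_{e^{-b_-}}$, and which are triangular with respect to the cut at $R$: since $e^{\pm b_+}$ is analytic in the upper half-plane, its Wiener-Hopf operator has kernel supported on $\{x\le y\}$, whence $\chi_{[R,\infty)}W_{e^{\pm b_+}}\chi_{[0,R]}=0$, and dually $\chi_{[0,R]}W_{e^{\pm b_-}}\chi_{[R,\infty)}=0$.

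Set $T=W_{e^{b_-}}B_{e^{-b}}W_{e^{b_+}}$, so that $T^{-1}=W_{e^{-b_+}}B_{e^b}W_{e^{-b_-}}$ and $Q_R^{\mathcal{B}}(b)=\det(\chi_{[R,\infty)}T\chi_{[R,\infty)})$. The plan is to first show that $T-I$ is trace class. Using multiplicativity I rewrite $T=W_{e^{b_-}}B_{e^{-b_-}}\cdot B_{e^{-b_+}}W_{e^{b_+}}$ and compare each Bessel factor $B_{e^{\mp b_\pm}}$ with the corresponding triangular Wiener-Hopf operator $W_{e^{\mp b_\pm}}$; since the paired minus-type and plus-type Wiener-Hopf operators multiply to the identity, the difference $T-I$ is expressed through the discrepancies $B_{e^{\mp b_\pm}}-W_{e^{\mp b_\pm}}$, which are trace class and concentrated near the origin. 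Granting this, Jacobi's complementary minor identity applied to $T$ gives $\det(\chi_{[R,\infty)}T\chi_{[R,\infty)})=\det(T)\,\det(\chi_{[0,R]}T^{-1}\chi_{[0,R]})$, and the triangularity relations reduce the outer Wiener-Hopf factors inside $\chi_{[0,R]}T^{-1}\chi_{[0,R]}=\chi_{[0,R]}W_{e^{-b_+}}B_{e^b}W_{e^{-b_-}}\chi_{[0,R]}$, expressing $\det(\chi_{[0,R]}B_{e^b}\chi_{[0,R]})$ as $\det(\chi_{[R,\infty)}T\chi_{[R,\infty)})$ multiplied by a factor that depends only on the symbol and on $R$.

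That remaining factor is identified with $\exp(Rc_1^{\mathcal{B}}(b)+c_2^{\mathcal{B}}(b)+c_3^{\mathcal{B}}(b))$ by letting $R\to\infty$: in this limit $\chi_{[R,\infty)}(T-I)\chi_{[R,\infty)}\to0$ in trace norm, so $Q_R^{\mathcal{B}}(b)\to1$, and matching with the Kac-Akhiezer asymptotic of Theorem~\ref{5:bessel_asymp} pins down the three constants. They come out as the volume term $c_1^{\mathcal{B}}(b)=\hat b(0)$, the Bessel boundary term $c_2^{\mathcal{B}}(b)=-\tfrac{\nu}{2}b(0)$ produced by the index-$\nu$ behaviour of $J_\nu$ at the origin, and the Szeg\"{o} quadratic term $c_3^{\mathcal{B}}(b)=\tfrac12\int_{\mathbb{R}_+}x\,\hat b(x)^2\,dx$; this establishes the identity \eqref{1:bessel_ident}.

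It remains to prove the speed estimate \eqref{1:bessel_speed}. Writing $Q_R^{\mathcal{B}}(b)=\det(I+K_R)$ with $K_R=\chi_{[R,\infty)}(T-I)\chi_{[R,\infty)}$ and using $\abs{\det(I+K_R)-1}\le\norm{K_R}_1\exp(\norm{K_R}_1)$, everything reduces to the single bound $\norm{K_R}_1\le C R^{-1/2}e^{4\normN{b_+}}L(b)$. The exponential prefactor is the price of passing the Wiener-Hopf factors through in operator norm, $\norm{W_{e^{b_\pm}}}\le e^{\normN{b_\pm}}$ with $\normN{b_-}=\normN{b_+}$ for real symbols, while the decisive gain of $R^{-1/2}$ comes from compressing the discrepancy kernels to $[R,\infty)^2$ and using the large-argument asymptotics $J_\nu(s)\sim\sqrt{2/(\pi s)}\,\cos(s-\tfrac{\nu\pi}{2}-\tfrac{\pi}{4})$, whose $s^{-1/2}$ amplitude, squared and integrated over $[R,\infty)$, yields the rate. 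I expect this trace-norm bound to be the main obstacle. The difficulty is that the discrepancies $B_{e^{\mp b_\pm}}-W_{e^{\mp b_\pm}}$ must be shown to be trace class after compression, with a norm controlled explicitly by $L(b)=(1+\normN{xb'(x)}^2+\normB{b}^2)\normB{b}$; this is exactly why the seminorm $\normB{b}=\normH{1}{b}+\normH{3}{b}+\normH{2}{xb(x)}+\normH{3}{x^2b(x)}$ bundles those particular weighted Sobolev norms, and why the estimate must be tracked uniformly in $b\in\mathcal{B}$ and $R\ge1$ rather than absorbed into an unnamed constant.
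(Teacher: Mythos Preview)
Your overall architecture matches the paper's, but there is a genuine gap at the step where you claim that $T-I$ is globally trace class. You factor $B_{e^{-b}}=B_{e^{-b_-}}B_{e^{-b_+}}$ and propose to compare each $B_{e^{\mp b_\pm}}$ with the triangular Wiener--Hopf operator $W_{e^{\mp b_\pm}}$. This does not work as stated, for two intertwined reasons. First, the discrepancy estimate of Lemma~\ref{2:bess_diff_est} compares $B_f$ with $W_f$ for a symbol $f$ on $\R_+$ extended \emph{evenly} to $\R$; the Wiener--Hopf factors $e^{\mp b_\pm}$ are not even, and the operator $W_{e^{-b_+}}$ built from the genuine plus-type symbol on $\R$ is not the operator produced by that lemma. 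Second, even if one restricts $b_\pm$ to $\R_+$ and re-extends evenly, the paper explicitly warns that the components $b_\pm$ need not lie in $\mathcal{B}$, so the hypotheses of the discrepancy lemma are unavailable for them. The paper in fact states outright that it is not known whether $W_{e^{-b_+}}B_{e^b}W_{e^{-b_-}}-I$ is trace class, and since $T-I$ is trace class if and only if $T^{-1}-I$ is, the same obstruction blocks your route to the ordinary Jacobi--Dodgson identity.

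The paper circumvents this by never asserting global trace class. It proves a variant of Jacobi--Dodgson (Corollary~\ref{5:JD_ext}) requiring only that $A-I$ be \emph{compact} and that the four truncated determinants exist separately; compactness of $B_{e^{-b}}-W_{e^{-b}}$ is established directly (it is Hilbert--Schmidt). The well-definedness and decay of $Q_R^{\mathcal{B}}(b)$ come not from your factored comparison but from the identity
\[
\chi_{[R,\infty)}T\chi_{[R,\infty)}=\chi_{[R,\infty)}+\chi_{[R,\infty)}W_{e^{b_-}}\chi_{[R,\infty)}\bigl(B_{e^{-b}}-W_{e^{-b}}\bigr)\chi_{[R,\infty)}W_{e^{b_+}}\chi_{[R,\infty)},
\]
obtained from triangularity together with $W_{e^{b_-}}W_{e^{-b}}W_{e^{b_+}}=I$, which keeps the \emph{even} symbol $e^{-b}$ intact. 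Your speed estimate is essentially correct once rerouted through this formula: Lemma~\ref{2:bess_diff_est} bounds the trace norm of $\chi_{[R,\infty)}(B_{e^{-b}}-W_{e^{-b}})\chi_{[R,\infty)}$ by $C\normB{e^{-b}}/\sqrt{R}$, and Lemma~\ref{5:B_space_prop_pl} converts $\normB{e^{-b}}$ into $Ce^{\normN{b}}L(b)$; combined with $\norm{W_{e^{b_\pm}}}\le e^{\normN{b_\pm}}$ and $\normN{b}\le 2\normN{b_+}$ this yields the factor $e^{4\normN{b_+}}L(b)$.
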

\begin{remark}
	Let us again mention cases $\nu=\pm 1/2$. Substituting 
	$B_b = W_b\mp H_b$ into the formula for $Q_R^{\mathcal{B}}(b)$ 
	we obtain that it is equal to
	\begin{equation}\label{1:pmot_sp}
		Q_R^{\mathcal{B}}(b) = \det(I \mp \chi_{[R, \infty)}
		H_{e^{b_--b_+}}\chi_{[R, \infty)}).
	\end{equation}
	We have used the following facts:
	\begin{align*}
		&W_{e^{-b}} = W_{e^{-b_-}}W_{e^{-b_+}}, 
		\quad W_{e^{b_{\pm}}} = e^{W_{b_\pm}}\\
		&W_{e^{b_-}}H_{e^{-b_+}}W_{e^{b_+}} = H_{e^{b_--b_+}}.
	\end{align*}
	For the first two identities see \S~6. 
	For the last see \cite[Section II]{BEW_2003}. 
	Indeed, the expression for remainder \eqref{1:pmot_sp} coincides with 
	the one obtained by Basor, Ehrhardt and Widom in \cite[Formula (3)]{BEW_2003}
\end{remark}

We now proceed to the corollary for the determinanal point process 
with the Bessel kernel $\mathbb{P}_{J_\nu}$ (see \S~5). 
Let $\overline{S_f^R} = S_f^R - \mathbb{E}_{J_\nu}S_f^R$. 
By $F_{R, f}$ and $F_{\mathcal{N}}$ we denote cumulative distribution 
functions of $\overline{S_f^R}$ and the standard Gaussian respectively.
\begin{theorem}\label{1:KS_estimate}
	Let $b\in\mathcal{B}$ be a real-valued function, 
	satisfying $c_3^{\mathcal{B}}(b)=1/2$. Then there exists 
	a constant $C=C\left(L(b), \normN{b_+}\right)$ providing 
	the following estimate for the Kolmogorov-Smirnov distance for any $R\ge 1$
	\begin{equation}
		\sup_x\abs{F_{R, b}(x) - F_{\mathcal{N}}(x)} \le \frac{C}{\ln R}.
	\end{equation}
\end{theorem}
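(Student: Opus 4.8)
The plan is to pass from the exact formula for the Laplace transform to the characteristic function of the centered functional and then apply a smoothing (Esseen-type) inequality. First I would set $\lambda = it$ in the identity of Theorem~\ref{1:BO_bessel}, which is legitimate because both sides of \eqref{1:bessel_ident} are entire in $\lambda$ (the Fredholm determinant and $Q_R^{\mathcal{B}}$ depend analytically on the multiplier $e^{\lambda b}$), so the real-variable identity extends by analytic continuation, while the seminorms defining $\mathcal{B}$ make sense verbatim for the complex-valued function $itb$. Using $\mathbb{E}_{J_\nu}e^{it S_b^R} = \det(\chi_{[0,R]}B_{e^{itb}}\chi_{[0,R]})$ together with the linearity of $c_1^{\mathcal{B}}, c_2^{\mathcal{B}}$ in $b$, the quadratic homogeneity $c_3^{\mathcal{B}}(itb) = -t^2 c_3^{\mathcal{B}}(b) = -t^2/2$, and the centering $\overline{S_b^R} = S_b^R - \mathbb{E}_{J_\nu}S_b^R$, the characteristic function factors as
\[
\phi_R(t) := \mathbb{E}_{J_\nu}e^{it\overline{S_b^R}} = e^{-t^2/2}\,e^{g_R(t)}, \qquad e^{g_R(t)} = e^{-it q_R'(0)}\,Q_R^{\mathcal{B}}(itb),
\]
where $q_R'(0) = \frac{d}{ds}\big|_{s=0}\ln Q_R^{\mathcal{B}}(sb)$ is real and records the first-order correction to the mean. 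In particular $\mathrm{Var}(\overline{S_b^R}) = 1 + o(1)$, which is why the hypothesis $c_3^{\mathcal{B}}(b) = 1/2$ normalizes the limit to the standard Gaussian.

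Next I would invoke Esseen's inequality: for every $T > 0$,
\[
\sup_x\abs{F_{R,b}(x) - F_{\mathcal{N}}(x)} \le \frac{1}{\pi}\int_{-T}^{T}\abs*{\frac{\phi_R(t) - e^{-t^2/2}}{t}}dt + \frac{C_0}{T},
\]
with $C_0$ an absolute constant governed by $\sup_x F_{\mathcal{N}}'(x) = (2\pi)^{-1/2}$. Since $\phi_R(t) - e^{-t^2/2} = e^{-t^2/2}(e^{g_R(t)} - 1)$, the whole task reduces to bounding $e^{-t^2/2}\abs{e^{g_R(t)}-1}/\abs{t}$ on $[-T, T]$. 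Here I would control $g_R(t)$ by two inputs: the estimate \eqref{1:bessel_speed} applied to the complex argument $itb$, which gives $\abs{Q_R^{\mathcal{B}}(itb) - 1}\lesssim R^{-1/2}e^{4\abs{t}\normN{b_+}}L(itb)$ with $L(itb)$ a fixed polynomial in $\abs{t}$ times the $\mathcal{B}$-seminorms, and a direct bound $\abs{q_R'(0)}\lesssim R^{-1/2}\normB{b}$ obtained by differentiating \eqref{1:bessel_speed} at $s=0$. Choosing $T = c\ln R$ with $c = c(\normN{b_+})$ small enough that $4c\normN{b_+}\le \tfrac14$, the right-hand side of \eqref{1:bessel_speed} stays $\lesssim (\ln R)^3 R^{-1/4}\to 0$ uniformly on $\abs{t}\le T$, so $\abs{g_R(t)}\le \tfrac12$ there and $\abs{e^{g_R(t)}-1}\le 2\abs{g_R(t)}$.

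It then remains to integrate. Using the Gaussian weight to absorb the growth in $t$,
\[
\int_{-T}^{T}e^{-t^2/2}\frac{\abs{e^{g_R(t)}-1}}{\abs{t}}\,dt \lesssim \frac{1}{\sqrt R}\int_{\R}e^{-t^2/2}e^{4\abs{t}\normN{b_+}}\left(\frac{L(itb)}{\abs{t}} + \normB{b}\right)dt \lesssim \frac{C(b)}{\sqrt R},
\]
the last integral being finite because the Gaussian factor dominates $e^{4\abs{t}\normN{b_+}}$ times any polynomial in $t$; this produces a constant depending only on $\normN{b_+}$ and the seminorms entering $L(b)$. Combining with the Esseen tail $C_0/T = C_0/(c\ln R)$, the smaller $R^{-1/2}$ contribution is dominated and the bound $\sup_x\abs{F_{R,b} - F_{\mathcal{N}}}\le C/\ln R$ follows, with $C = C(L(b), \normN{b_+})$. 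The main obstacle is the restricted range of validity of \eqref{1:bessel_speed}: the factor $e^{4\abs{t}\normN{b_+}}$ forces $T$ to be only logarithmic in $R$, and it is exactly this logarithmic cutoff --- rather than the genuinely $R^{-1/2}$-small integral --- that the Esseen tail term converts into the final $1/\ln R$ rate. Care is also needed to confirm that \eqref{1:bessel_speed} continues to hold for the purely imaginary multiplier $itb$, equivalently that the proof of Theorem~\ref{1:BO_bessel} used only seminorm bounds valid for complex-valued $b$.
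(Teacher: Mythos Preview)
Your approach is essentially the paper's: apply the Feller--Esseen smoothing inequality, insert the identity \eqref{1:bessel_ident} with $\lambda=it$, use \eqref{1:bessel_speed} to control $Q_R^{\mathcal{B}}(itb)-1$, and choose $T=c\ln R$ with $c$ small enough that the factor $e^{4|t|\normN{b_+}}$ grows only like a small power of $R$. The one substantive difference is in how the mean correction is handled. The paper proves a separate estimate (Lemma~\ref{5:exp_conv}),
\[
\bigl|\mathbb{E}_{J_\nu}S_b^R-\bigl(R\hat b(0)-\tfrac{\nu}{2}b(0)\bigr)\bigr|\le \frac{C\normB{b}}{\sqrt R},
\]
by a direct computation with the diagonal asymptotics of the Bessel kernel. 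You instead read off $|q_R'(0)|\lesssim R^{-1/2}\normB{b}$ from \eqref{1:bessel_speed} itself. Your phrasing ``differentiating \eqref{1:bessel_speed}'' is imprecise (one cannot differentiate an inequality), but the intended argument is valid: since $L(sb)=|s|\,\normB{b}\,(1+O(s^2))$ and $Q_R^{\mathcal{B}}(0)=1$ with $s\mapsto Q_R^{\mathcal{B}}(sb)$ analytic, dividing \eqref{1:bessel_speed} by $|s|$ and letting $s\to0$ gives the derivative bound. This bypasses the Bessel--diagonal computation entirely and is a genuine economy. A second, more cosmetic, difference is that you retain the Gaussian weight $e^{-t^2/2}$ in the Esseen integrand and thereby bound the integral by $C(b)/\sqrt R$ uniformly in $T$, whereas the paper drops that factor and gets $C(b)(\ln R)^3/\sqrt R$; both are $o(1/\ln R)$, so the tail term $C_0/T\asymp 1/\ln R$ dominates in either case.
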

\newpage
\section{Structure of the paper}
Let us first recall the result of Basor and Chen.

\begin{theorem}\label{2:BO_identity}
	For $f\in H_1(\R)\cap L_1(\R)$ we have
	\[
		\det(\chi_{[0, R]}W_{e^f}\chi_{[0, R]}) 
		= \exp(Rc_1^{\mathcal{S}}(f) + c_2^{\mathcal{S}}(f))Q_R^{\mathcal{S}}(f),
	\]
	where
	\begin{align*}
		&c_1^{\mathcal{S}}(f) = \hat{f}(0),\\
		&c_2^{\mathcal{S}}(f) = \frac{1}{2}\int_{\R_+} 
		x\hat{f}(x)\hat{f}(-x)dx,\\
		&Q_R^{\mathcal{S}}(f) = \det(\chi_{[R, \infty)}
		W_{e^{f_-}}W_{e^{-f_+}}W_{e^{-f_-}}W_{e^{f_+}}\chi_{[R, \infty)}).
	\end{align*}
\end{theorem}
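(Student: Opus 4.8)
The plan is to prove the identity by the operator-theoretic route, the continuous analogue of Böttcher's localization proof of the Borodin--Okounkov formula, rather than through Schur measures. The starting point is the Wiener--Hopf factorization $e^f=a_-a_+$ with $a_\pm=e^{f_\pm}$, $f_\pm=P_\pm f$. Since $\widehat{f_+}$ is supported on $\R_+$ and $\widehat{f_-}$ on $\R_-$, the factors $W_{a_\pm}$ are invertible with $W_{a_\pm}^{-1}=W_{a_\pm^{-1}}$, they exponentiate exactly, $W_{a_\pm}=e^{W_{f_\pm}}$ (no Hankel correction appears, since $H_{f_-}=0$ and symmetrically for the $+$ part), and the convolution kernel of $W_{a_+}$ is supported on $\{x\ge y\}$ while that of $W_{a_-}$ is supported on $\{x\le y\}$. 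With $P=\chi_{[0,R]}$ and $Q=\chi_{[R,\infty)}$ this yields the block-triangularity relations $PW_{a_+}Q=0$ and $QW_{a_-}P=0$, together with their counterparts for the inverses. These four structural facts are the only analytic input needed about the factors, and I would record them first as a lemma.

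Next I would split the finite-interval determinant. Writing $W_{e^f}=W_{a_-}W_{a_+}$ in $2\times2$ block form with respect to $L_2[0,R]\oplus L_2[R,\infty)$, triangularity makes $W_{a_-}$ block-upper- and $W_{a_+}$ block-lower-triangular, so the $(P,P)$-block of $W_{e^f}$ is $(PW_{a_-}P)(PW_{a_+}P)+(PW_{a_-}Q)(QW_{a_+}P)$. Factoring out the diagonal blocks and applying Sylvester's identity $\det(I+UV)=\det(I+VU)$ moves the off-diagonal correction onto the tail space $L_2[R,\infty)$. The diagonal contributions $\det(PW_{a_\pm}P)$ are computed from $W_{a_\pm}=e^{W_{f_\pm}}$: triangularity gives $PW_{a_\pm}P=e^{PW_{f_\pm}P}$, hence $\det(PW_{a_\pm}P)=\exp(\tr PW_{f_\pm}P)$, and the trace is $R$ times the diagonal value of the convolution kernel, namely $R\,\widehat{f_\pm}(0)$; summing gives the bulk factor $\exp(R\hat{f}(0))=\exp(Rc_1^{\mathcal S})$.

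It then remains to identify the tail determinant. Here I would use the Hankel calculus $W_bW_c=W_{bc}+H_bH_{\tilde c}$ together with $H_{a_-}=H_{a_-^{-1}}=0$ and $H_{\widetilde{a_+}}=H_{\widetilde{a_+^{-1}}}=0$ to collapse the four-factor product, $W_{a_-}W_{a_+}^{-1}W_{a_-}^{-1}W_{a_+}=I+H_{a_-/a_+}H_{\widetilde{a_+/a_-}}$, which is $I$ plus a product of two Hilbert--Schmidt Hankel operators and hence $I$ plus a trace-class operator; this is exactly the combination inside $Q_R^{\mathcal S}(f)$, and it makes $\det(Q\cdots Q)$ well defined with limit $1$ as $R\to\infty$. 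The constant $c_2^{\mathcal S}(f)=\tfrac{1}{2}\int_{\R_+}x\hat{f}(x)\hat{f}(-x)\,dx$ is the continuous strong Szeg\H{o} (Akhiezer--Kac) constant, which I would obtain as $\log\det(W_{e^{-f}}W_{e^f})$ via the Helton--Howe/Pincus trace formula (equivalently, by expanding $\log\det$ of the same Hankel product over the whole half-line); the same computation shows this constant factors out of the tail determinant, leaving precisely $Q_R^{\mathcal S}(f)$.

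The trace-class bookkeeping throughout is supplied by the hypothesis $f\in H_1(\R)\cap L_1(\R)$: the $H_1$-regularity makes $H_{a_-/a_+}$ Hilbert--Schmidt, so the relevant products are trace class, while $L_1$ controls the convolution kernels of the truncated operators. The step I expect to be the main obstacle is the clean separation of the $R$-linear bulk factor $\exp(Rc_1^{\mathcal S})$ from the $R$-independent Szeg\H{o} constant $\exp(c_2^{\mathcal S})$: the determinant of $W_{e^f}$ over the whole half-line is divergent, and this divergence is precisely the bulk growth, so Jacobi's complementary-minor identity cannot be applied naively and one must run the block-triangular regularization above while tracking every trace-class correction. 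This is the continuous counterpart of the most delicate point in Böttcher's argument, and it is where the exact values of $c_1^{\mathcal S}$ and $c_2^{\mathcal S}$, together with the exactness of the identity for every finite $R$, are pinned down simultaneously.
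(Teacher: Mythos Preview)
Your approach is correct and follows a route closely related to, but organized differently from, the paper's. The paper proceeds in three crisp steps: (1) use the triangularity relations $PW_{e^{f_+}}=PW_{e^{f_+}}P$, $W_{e^{f_-}}P=PW_{e^{f_-}}P$ to write $\det(PW_{e^f}P)=e^{R\hat f(0)}\det(PAP)$ with $A=W_{e^{-f_+}}W_{e^f}W_{e^{-f_-}}$; (2) observe that $A-I$ is trace class, so the Jacobi--Dodgson identity $\det(PAP)=\det(A)\det(QA^{-1}Q)$ applies \emph{directly}, and $A^{-1}=W_{e^{f_-}}W_{e^{-f_+}}W_{e^{-f_-}}W_{e^{f_+}}$ gives the tail determinant $Q_R^{\mathcal S}(f)$ on the nose; (3) compute $\det(A)=e^{c_2^{\mathcal S}}$ via the Widom trace formula (equivalently Helton--Howe). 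Your route instead decomposes $PW_{a_-}W_{a_+}P$ block-wise, factors out the diagonal blocks, and uses Sylvester to move the correction to $L_2[R,\infty)$, then invokes the Hankel identity $W_bW_c=W_{bc}+H_bH_{\tilde c}$ to recognize the four-factor product as $I+H_{a_-/a_+}H_{\widetilde{a_+/a_-}}$.

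The main contrast is that your anticipated obstacle --- that Jacobi--Dodgson cannot be applied naively because the full-line determinant diverges --- is precisely what the paper's Step~1 dissolves: once $e^{Rc_1^{\mathcal S}}$ is peeled off via triangularity, the residual operator $A$ \emph{is} determinant class, and the complementary-minor identity applies without further regularization. This makes the paper's argument shorter and spares the algebra of tracking the Sylvester transfer and then separately factoring out $e^{c_2^{\mathcal S}}$ from the resulting tail expression (the one place where your sketch is vague). Your block-triangular/Hankel route is essentially the B\"ottcher localization argument and has the advantage of making the trace-class structure of the tail operator explicit as a product of two Hilbert--Schmidt Hankels; the paper's Jacobi--Dodgson route buys a cleaner separation of the three constants and generalizes more readily to the Bessel case treated later.
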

\begin{remark}
	We state Theorem \ref{2:BO_identity} under different conditions 
	than in \cite{BChen2003} for a clearer outline of proof 
	of Theorem \ref{1:BO_bessel}.
\end{remark}
The proof consists of three main steps.

\textbf{Step 1.} Wiener-Hopf factorization properties give the following identity
\begin{equation}\label{2:BO_first}
	\det(\chi_{[0, R]}W_{e^f}\chi_{[0, R]}) = e^{Rc_1^{\mathcal{S}}(f)}
	\det(\chi_{[0, R]}W_{e^{-f_+}}W_{e^f}W_{e^{-f_-}}\chi_{[0, R]}).
\end{equation}

\textbf{Step 2.} It will be shown that the operator 
$W_{e^{-f_+}}W_{e^f}W_{e^{-f_-}}$ is of determinant class. 
Apply the Jacobi-Dodgson identity: $\det(PAP) = \det(A)\det(QA^{-1}Q)$ 
for orthogonal projections $P, Q$, satisfying $P+Q=I$, and determinant 
class invertible $A$. We have
\begin{equation}\label{2:BO_second}
	\frac{\det(\chi_{[0, R]}W_{e^{-f_+}}W_{e^f}W_{e^{-f_-}}
		\chi_{[0, R]})}{ \det(\chi_{[R, \infty)}W_{e^{f_-}}
		W_{e^{-f_+}}W_{e^{-f_-}}W_{e^{f_+}}\chi_{[R, \infty)})}= 
		\det(W_{e^{-f_+}}W_{e^f}W_{e^{-f_-}}),
\end{equation}
where $W_{e^f}^{-1} = W_{e^{-f_+}}W_{e^{-f_-}}$, as follows 
from the properties of the Wiener-Hopf factorization.

\textbf{Step 3.} From properties of the Fredholm determinant 
and Wiener-Hopf factorization we have $\det(W_{e^{-f_+}}W_{e^f}W_{e^{-f_-}}) =
 \det(W_{e^{f_-}}W_{e^{f_+}}W_{e^{-f_-}}W_{e^{-f_+}})$. 
 The Widom formula (see \cite{Widom1982-qq}) states that 
 the last determinant is well-defined and its value is
\begin{equation}\label{2:BO_last}
	\det(W_{e^{f_-}}W_{e^{f_+}}W_{e^{-f_-}}W_{e^{-f_+}}) = e^{c_2^{\mathcal{S}}(f)}.
\end{equation}
This finishes the proof.

We now pass to the proof of our main result, Theorem \ref{1:BO_bessel}.
\begin{lemma}\label{2:bessel_div}
	For $b\in H_1(\R_+)\cap L_1(\R_+)$ we have
	\begin{equation}\label{2:bessel_div_formula}
		\det(\chi_{[0, R]}B_{e^b}\chi_{[0, R]}) =
		 e^{Rc_1^{\mathcal{B}}(b)}\det(\chi_{[0, R]}W_{e^{-b_+}}
		 B_{e^{b}}W_{e^{-b_-}}\chi_{[0, R]}).
	\end{equation}
\end{lemma}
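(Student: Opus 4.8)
The plan is to imitate Step~1 of the proof of Theorem~\ref{2:BO_identity}, inserting the triangular Wiener--Hopf factors around the Bessel operator $B_{e^b}$ rather than around $W_{e^f}$. Write $Q = \chi_{[0,R]}$ and $Q^\perp = \chi_{[R,\infty)}$ for the complementary projections on $L_2(\R_+)$. I will freely use the Wiener--Hopf factorization properties recalled above and established in Section~6: the operators $W_{e^{\pm b_\pm}}$ are bounded and invertible with $W_{e^{b_\pm}}^{-1} = W_{e^{-b_\pm}}$ and $W_{e^{b_\pm}} = e^{W_{b_\pm}}$, together with the triangularity relations, valid for any $+$ symbol $g$ and any $-$ symbol $h$,
\[
Q\,W_g = Q\,W_g\,Q, \qquad W_h\,Q = Q\,W_h\,Q,
\]
which I apply both to $g = e^{b_+}$, $h = e^{b_-}$ and to $g = b_+$, $h = b_-$. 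Inserting the identities $W_{e^{b_+}}W_{e^{-b_+}} = I$ and $W_{e^{-b_-}}W_{e^{b_-}} = I$ on the two sides of $B_{e^b}$ gives, purely algebraically,
\[
Q\,B_{e^b}\,Q = Q\,W_{e^{b_+}}\bigl(W_{e^{-b_+}}B_{e^b}W_{e^{-b_-}}\bigr)W_{e^{b_-}}\,Q.
\]

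Next I would push the outer factors against the projections using triangularity. Since $Q W_{e^{b_+}} = Q W_{e^{b_+}} Q$ and $W_{e^{b_-}} Q = Q W_{e^{b_-}} Q$, a copy of $Q$ may be inserted after $W_{e^{b_+}}$ and before $W_{e^{b_-}}$, yielding on $L_2[0,R]$ the factorization
\[
Q\,B_{e^b}\,Q = \bigl(Q W_{e^{b_+}} Q\bigr)\bigl(Q W_{e^{-b_+}}B_{e^b}W_{e^{-b_-}} Q\bigr)\bigl(Q W_{e^{b_-}} Q\bigr).
\]
The two outer factors are invertible on $L_2[0,R]$, with inverses $Q W_{e^{-b_\pm}} Q$, so once each factor is shown to be an identity-plus-trace-class operator the multiplicativity of the Fredholm determinant applies and
\[
\det\bigl(Q B_{e^b} Q\bigr) = \det\bigl(Q W_{e^{b_+}} Q\bigr)\,\det\bigl(Q W_{e^{-b_+}}B_{e^b}W_{e^{-b_-}} Q\bigr)\,\det\bigl(Q W_{e^{b_-}} Q\bigr).
\]

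It then remains to evaluate the two outer determinants. Using $W_{e^{b_\pm}} = e^{W_{b_\pm}}$ together with the triangularity of $W_{b_\pm}$, one checks by induction that $Q W_{b_\pm}^{\,n} Q = (Q W_{b_\pm} Q)^n$, whence $Q W_{e^{b_\pm}} Q = e^{Q W_{b_\pm} Q}$ and therefore $\det(Q W_{e^{b_\pm}} Q) = \exp\tr(Q W_{b_\pm} Q)$. Since $W_{b_\pm}$ is convolution with a kernel depending only on $x-y$, its truncated trace equals $R$ times the value of that kernel at the origin, and adding the two contributions gives $\tr(Q W_{b_+} Q) + \tr(Q W_{b_-} Q) = \tr(Q W_b Q) = R\,\hat b(0) = R\,c_1^{\mathcal{B}}(b)$, producing the prefactor $e^{R c_1^{\mathcal{B}}(b)}$ and finishing the proof. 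The main obstacle is not the algebra but the analytic bookkeeping: under the hypothesis $b \in H_1(\R_+)\cap L_1(\R_+)$ alone one must verify that $Q B_{e^b} Q$, the truncated Wiener--Hopf factors $Q W_{e^{b_\pm}} Q$, and the middle operator $Q W_{e^{-b_+}}B_{e^b}W_{e^{-b_-}} Q$ are all of determinant class on $L_2[0,R]$, so that every determinant above is defined and the multiplicativity is legitimate. This is where the smoothness encoded in $H_1$ enters, guaranteeing in particular that the convolution kernels of $W_{b_\pm}$ are continuous at the origin and that the relevant truncations are trace class.
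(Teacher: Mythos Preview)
Your algebra is right, but the analytic bookkeeping you flag at the end is not a technicality --- it actually fails. The individual outer factors $QW_{e^{b_\pm}}Q$ are \emph{not} of determinant class on $L_2[0,R]$, so the three-factor splitting of the determinant is illegitimate. Concretely, by triangularity $QW_{e^{b_+}}Q-Q=e^{QW_{b_+}Q}-I$, and for a Hilbert--Schmidt operator $A$ one has $e^A-I-A\in\mathcal J_1$, so $e^{QW_{b_+}Q}-I$ is trace class iff $QW_{b_+}Q$ is. But the kernel of $QW_{b_+}Q$ is $\chi_{[0,R]^2}(x,y)\,\chi_{\R_+}(x-y)\,\hat b(x-y)$, which near the diagonal equals $\hat b(0)\,\chi_{\{x>y\}}$ plus a remainder vanishing on the diagonal; the first piece is $\hat b(0)$ times the Volterra operator on $[0,R]$, whose singular values decay like $R/(\pi n)$ and hence are not summable. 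Thus $QW_{b_+}Q\notin\mathcal J_1$ whenever $\hat b(0)\neq 0$, the trace $\tr(QW_{b_+}Q)$ in your final step is undefined, and $\det(QW_{e^{b_+}}Q)$ does not exist as a Fredholm determinant.

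The paper avoids this by never separating the two triangular factors. It writes
\[
\det(QB_{e^b}Q)=\det(QW_{e^{-b_+}}B_{e^b}W_{e^{-b_-}}Q)\,\det\bigl(e^{QW_{b_-}Q}e^{QW_{b_+}Q}\bigr),
\]
and the point is that the \emph{product} $e^{QW_{b_-}Q}e^{QW_{b_+}Q}-I$ \emph{is} trace class: modulo $\mathcal J_1$ it equals $QW_{b_-}Q+QW_{b_+}Q=QW_bQ$, and $QW_bQ\in\mathcal J_1$ by Mercer since $b\in L_1\cap L_\infty$ gives a continuous symmetric kernel $\hat b(x-y)$. The value of that determinant is then obtained from the Helton--Howe formula (Theorem~\ref{3_0:HH_th}): since $QW_{b_\pm}Q$ are Hilbert--Schmidt their commutator is trace class with trace zero, so $\det(e^{QW_{b_-}Q}e^{QW_{b_+}Q}e^{-QW_bQ})=1$ and hence $\det(e^{QW_{b_-}Q}e^{QW_{b_+}Q})=e^{\tr(QW_bQ)}=e^{R\hat b(0)}$. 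Your argument can be repaired exactly along these lines: drop the claim that each outer factor is separately determinant class, keep them paired, and invoke Helton--Howe instead of the formula $\det(e^A)=e^{\tr A}$ for the individual (non-trace-class) $A=QW_{b_\pm}Q$.
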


It is not known if $W_{e^{-b_+}}B_{e^{b}}W_{e^{-b_-}}-I$ is trace class, 
so the Jacobi-Dodgson identity cannot be applied directly. 
However, one can observe that the equality \eqref{2:BO_second} 
means that the ratio of the determinants in the left-hand side 
does not depend on $R$. In our case we can use an analogue of 
the Jacobi-Dodgson identity (Corollary \ref{5:JD_ext}) to show 
the independence of the ratio from $R$.

\begin{lemma}\label{2:det_relation}
	Let $b\in H_1(\R_+)\cap L_1(\R_+)$ and $\chi_{[R, \infty)}(B_{e^{-b}} - 
	W_{e^{-b}})\chi_{[R, \infty)}$ be a trace class operator for any $R>0$. 
	Then there exists $Z(b)$ such that for any $R>0$ we have
	\begin{equation}\label{2:det_relation_eq}
		\frac{\det(\chi_{[0, R]}W_{e^{-b_+}}B_{e^{b}}W_{e^{-b_-}}\chi_{[0, R]})}
		{\det(\chi_{[R, \infty)}W_{e^{b_-}}B_{e^{-b}}W_{e^{b_+}}\chi_{[R, \infty)})} = Z(b).
	\end{equation}
\end{lemma}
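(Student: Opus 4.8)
The plan is to prove that the ratio in \eqref{2:det_relation_eq} is independent of $R$ by showing that its logarithmic derivative in $R$ vanishes, or equivalently by establishing the analogue of the Jacobi-Dodgson identity announced as Corollary \ref{5:JD_ext}. The key difficulty, as the authors emphasize, is that $W_{e^{-b_+}}B_{e^{b}}W_{e^{-b_-}} - I$ is not known to be trace class, so the classical identity $\det(PAP) = \det(A)\det(QA^{-1}Q)$ cannot be invoked directly. The natural strategy is therefore to avoid asserting that either determinant equals a global Fredholm determinant of $A = W_{e^{-b_+}}B_{e^{b}}W_{e^{-b_-}}$, and instead to prove directly that the product of the numerator with the reciprocal of the denominator is a quantity $Z(b)$ that does not vary with $R$.

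First I would set $A = W_{e^{-b_+}}B_{e^{b}}W_{e^{-b_-}}$, let $P = \chi_{[0,R]}$ and $Q = \chi_{[R,\infty)}$, and identify the formal inverse $A^{-1} = W_{e^{b_-}}B_{e^{-b}}W_{e^{b_+}}$ using the Wiener-Hopf factorization properties $W_{e^{b_\pm}}W_{e^{-b_\pm}} = I$ together with $B_{e^b}B_{e^{-b}} = I$ that follow from the multiplicativity recorded in Lemma \ref{2:bessel_div} and the surrounding identities. Thus the denominator is $\det(Q A^{-1} Q)$. The hypothesis that $\chi_{[R,\infty)}(B_{e^{-b}} - W_{e^{-b}})\chi_{[R,\infty)}$ is trace class is exactly what guarantees that $QA^{-1}Q - Q$ is trace class on the range of $Q$, since conjugation by the Wiener-Hopf factors $W_{e^{b_\pm}}$ supported near the half-line produces the Wiener-Hopf piece whose restriction to $[R,\infty)$ cancels against $Q$, leaving the trace-class remainder; this makes the denominator a genuine Fredholm determinant.

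The core of the argument is to differentiate the logarithm of the ratio with respect to $R$ and show it is zero. I would regularize by considering $\det(\chi_{[0,R]} A \chi_{[0,R]})$ and $\det(\chi_{[R,\infty)} A^{-1} \chi_{[R,\infty)})$ as functions of $R$, and compute $\frac{d}{dR}\log$ of each using the formula $\frac{d}{dR}\log\det(P_R M P_R) = \tr\big((P_R M P_R)^{-1} \frac{d}{dR}(P_R M P_R)\big)$, where the derivative of the projection $\chi_{[0,R]}$ in $R$ is evaluation against the kernel at the endpoint $x=R$ or $y=R$. The operator $A$ and its inverse contribute boundary terms at $R$; the crucial cancellation is that the endpoint contribution from the numerator determinant matches, with opposite sign, the endpoint contribution from the denominator determinant, precisely because $A A^{-1} = I$ relates the two boundary kernels. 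The role of the trace-class hypothesis is to ensure these boundary traces are finite and the formal manipulations are justified, so that the two $R$-derivatives are genuinely equal and cancel.

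The main obstacle I anticipate is making the differentiation-in-$R$ rigorous without trace-class control on the full operator $A - I$: one must justify that the Fredholm determinants in the numerator and denominator are differentiable in $R$ and that the boundary-term traces are well-defined, using only the localized trace-class assumption on $\chi_{[R,\infty)}(B_{e^{-b}} - W_{e^{-b}})\chi_{[R,\infty)}$. I would handle this by working with the framework of Corollary \ref{5:JD_ext}, which presumably extends the Jacobi-Dodgson identity to the setting where only $QA^{-1}Q - Q$ (rather than $A - I$ globally) is trace class; invoking that corollary directly would let me conclude that the ratio equals a fixed constant $Z(b)$ for all $R>0$, the value being read off in any convenient limit such as $R\to\infty$. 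The verification that the localized operator difference is indeed trace class, which is assumed as a hypothesis here, is where the smoothness encoded in the space $\mathcal{B}$ will later be essential, but for this lemma it is granted.
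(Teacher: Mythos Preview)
Your proposal wavers between two strategies, and the one you call ``the core'' --- differentiating the log of the ratio in $R$ and showing the derivative vanishes --- is not the paper's route and, as sketched, has a genuine gap. To carry it out you would need $R\mapsto\det(\chi_{[0,R]}A\chi_{[0,R]})$ to be differentiable and the ``boundary traces'' at $(R,R)$ to make sense, but the hypotheses of the lemma give no pointwise regularity of the kernel of $A$ near the diagonal, and without trace-class control on $A-I$ the standard variation formula for $\log\det$ is unavailable. The paper bypasses all of this: Corollary~\ref{5:JD_ext} compares two \emph{discrete} projection pairs $P_i=\chi_{[0,R_i]}$, $i=1,2$, and its proof is a finite-rank approximation, not a continuity or differentiability argument. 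One picks a joint eigenbasis of $P_1,P_2$, lets $R_n$ project onto the span of the first $n$ vectors, observes that compactness of $A-I$ forces $R_n(A-I)R_n\to A-I$ in norm so that $R_nAR_n$ is eventually invertible, applies the classical Jacobi--Dodgson identity on the finite-dimensional space $R_n\mathcal{H}$, and passes to the limit using that each of the four determinants is assumed to exist.

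You also misstate the hypothesis of Corollary~\ref{5:JD_ext}: it requires $A-I$ to be \emph{compact globally}, not merely $QA^{-1}Q-Q$ to be trace class. In the paper this is secured by a separate argument just before the proof: $\mathcal{R}_{e^{-b}}=B_{e^{-b}}-W_{e^{-b}}$ is Hilbert--Schmidt, since its $\chi_{[1,\infty)}$-block is trace class by hypothesis, its $\chi_{[0,1]}$-block is trace class by Mercer, and the off-diagonal blocks are Hilbert--Schmidt by the Basor--Ehrhardt estimate that $\chi_{[0,1]}B_b$ and $\chi_{[0,1]}W_b$ are Hilbert--Schmidt for $b\in H_1\cap L_1$; compactness of $A-I$ then follows. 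Your observation that the triangularity of $W_{e^{b_\pm}}$ reduces $Q A^{-1}Q$ to $Q+QW_{e^{b_-}}Q\,\mathcal{R}_{e^{-b}}\,QW_{e^{b_+}}Q$ is correct and is precisely formula~\eqref{5:denominator_expr}; this verifies one of the ``all determinants well defined'' hypotheses of the corollary, but it is not a substitute for the global compactness that drives the finite-rank approximation.
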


Observe that the denominator in \eqref{2:det_relation_eq} 
equals $Q_R^{\mathcal{B}}(b)$. The central step in the proof of 
Theorem \ref{1:BO_bessel} is the following.

\begin{lemma}\label{2:bess_diff_est}
	For $b\in L_1(\R_+)\cap L_\infty(\R_+)$ such that $\normB{b}< \infty$ 
	and $z\in\C$ we have that $\chi_{[R, \infty)}(B_{b+z} - 
	W_{b+z})\chi_{[R, \infty)}$ is trace class for any $R>0$. 
	There exists a constant $C$ such that for any $R\ge1$, $z\in\C$ 
	and $b$ satisfying conditions above we have 
	\begin{equation}
		\norm{\chi_{[R, \infty)}(B_{b+z} - W_{b+z})
			\chi_{[R, \infty)}}_{\mathcal{J}_1} \le \frac{C}{\sqrt{R}}\normB{b}.
	\end{equation}
\end{lemma}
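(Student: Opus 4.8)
The plan is to reduce everything to the single operator $B_b-W_b$ and then to estimate its trace norm after restriction to $[R,\infty)$ by means of the large-argument asymptotics of the Bessel function. First I would record the elementary but essential reduction: both $f\mapsto B_f$ and $f\mapsto W_f$ are linear in the symbol, and for a constant symbol one has $B_z=W_z=zI$ (the former being the completeness relation $\int_0^\infty tJ_\nu(xt)J_\nu(yt)\,dt=\delta(x-y)/x$ for the Hankel transform, the latter immediate from $W_z=z\chi_{\R_+}$). Hence $B_{b+z}-W_{b+z}=B_b-W_b$ and the parameter $z$ disappears; it is retained only so that the statement may later be applied with symbol $e^{-b}=(e^{-b}-1)+1$, taking $z=1$. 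Thus it suffices to prove the bound for $B_b-W_b$.

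Next I would write the kernel of $B_b-W_b$ explicitly. The Wiener--Hopf kernel is $W_b(x,y)=\frac1\pi\int_0^\infty b(t)\cos((x-y)t)\,dt$, while inserting the uniform expansion $\sqrt{x}\,J_\nu(xt)=\sqrt{2/\pi t}\,\cos(xt-\phi)+O((xt)^{-3/2}\sqrt x)$, with $\phi=\tfrac{\nu\pi}2+\tfrac\pi4$, into $B_b(x,y)=\int_0^\infty t\sqrt{xy}\,J_\nu(xt)J_\nu(yt)b(t)\,dt$ produces, to leading order, $\frac1\pi\int_0^\infty b(t)\big[\cos((x-y)t)+\cos((x+y)t-2\phi)\big]dt$. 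The diagonal piece cancels $W_b$ exactly, leaving a Hankel term $h(x+y)$ with $h(s)=\frac1\pi\int_0^\infty b(t)\cos(st-2\phi)\,dt$, plus a remainder coming from the error in the Bessel asymptotics and from the region $xt\le1$ or $yt\le1$ where that asymptotics is invalid.

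The delicate point --- and the main obstacle --- is that $h$ decays only like $b(0)/(x+y)$: integrating by parts in $t$ gives $h(s)\sim\frac{\sin 2\phi}{\pi}\,\frac{b(0)}{s}=\frac{\cos\nu\pi}{\pi}\,\frac{b(0)}{s}$, so the leading Hankel term is a Carleman-type kernel sitting exactly at the borderline of the trace class and, taken alone, failing to be trace class for $\nu\ne\pm\tfrac12$. (For $\nu=\pm\tfrac12$ one has $\cos\nu\pi=0$, the tail is absent, and $B_b-W_b=\mp H_b$ with the rapidly decaying kernel $\hat b(x+y)$, recovering the situation of Basor--Ehrhardt--Widom.) For general $\nu$ this slowly decaying tail is a small-$t$ artifact of the term-by-term expansion and must be cancelled against the contribution of the non-asymptotic region $t\lesssim 1/R$, where $b(t)\approx b(0)$. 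I would therefore not split $h$ off naively, but keep the small-$t$ part of the exact kernel together with the Hankel term, so that the borderline $1/(x+y)$ tails cancel and the combined kernel is genuinely of trace class; the residual borderline behaviour of this term is what fixes the rate at $R^{-1/2}$.

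For the quantitative bound I would factor the resulting (regularized) operator through Hilbert--Schmidt operators adapted to the variable $x+y$. Writing the oscillatory part as $\mathcal C^{\mathsf T}M_b\,\mathcal C$, where $(\mathcal Cf)(t)=\int_R^\infty e^{ixt}f(x)\,dx$ and $M_b$ is multiplication by $b$, and integrating by parts in $t$ to trade each factor $e^{ixt}$ for a factor $x^{-1}$ together with a derivative falling on $b$, turns these formal factors into genuine Hilbert--Schmidt operators whose norms are computed by integrals $\int_R^\infty x^{-2}\,dx\sim R^{-1}$. The derivatives and the accompanying powers $t^k$ (which act on $b$ through $\widehat{t^kb}\propto\hat b^{(k)}$) are precisely what produce the four seminorms in $\normB{b}=\normH1{b}+\normH3{b}+\normH2{xb}+\normH3{x^2b}$, and $\norm{\cdot}_{\mathcal J_1}\le\norm{\cdot}_{\mathcal J_2}\norm{\cdot}_{\mathcal J_2}$ then yields the factor $R^{-1/2}\normB{b}$. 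Finally I would estimate the higher-order terms of the Bessel expansion and the remaining small-$t$ contribution directly in trace norm, checking that each is $O(R^{-1/2}\normB{b})$ or smaller, uniformly in $z$; collecting these gives the claimed inequality.
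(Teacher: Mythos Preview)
Your reduction to $z=0$ and your identification of the central obstacle---the $\dfrac{\sin 2\phi_\nu}{\pi}\,\dfrac{b(0)}{x+y}$ tail that sits exactly on the boundary of the trace class---are both correct and match the paper. Where your sketch parts company with the paper, and where I think it remains genuinely incomplete, is in the mechanism for killing that tail.

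You propose to ``keep the small-$t$ part of the exact kernel together with the Hankel term'' and rely on a cancellation localized near $t\lesssim 1/R$. But the exact integrand $\mathfrak J(xt)\mathfrak J(yt)-\tfrac1\pi\cos((x-y)t)$ does \emph{not} vanish at $t=0$ (it equals $-1/\pi$ there), and splitting the $t$-axis at a scale depending on $R$ would introduce boundary terms and $R$-dependent cutoffs whose trace-norm control you have not described. The paper avoids this entirely by using the exact improper integral
\[
\int_0^\infty\Big(\mathfrak J(xt)\mathfrak J(yt)-\tfrac{2}{\pi}\cos(xt-\phi_\nu)\cos(yt-\phi_\nu)\Big)\,dt=-\frac{\sin 2\phi_\nu}{\pi(x+y)},
\]
which lets one replace the offending $\dfrac{\sin 2\phi_\nu\, b(0)}{\pi(x+y)}$ by the same Bessel-minus-cosine integrand integrated against the constant $b(0)$. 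The net effect is that one works with $b_0(t)=b(t)-b(0)$ in the ``Bessel versus cosine'' piece, and the extra vanishing $b_0(0)=0$ is exactly what makes all the remaining integrals estimable. This exact identity is the missing ingredient in your plan.

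On the quantitative side, your Hilbert--Schmidt factorization $\mathcal C^{\mathsf T}M_b\mathcal C$ is morally the paper's separable-kernel bound $\|K\|_{\mathcal J_1}\le\int|a(t)|\,\|h_1(\cdot,t)\|_{L_2[R,\infty)}\|h_2(\cdot,t)\|_{L_2[R,\infty)}\,dt$, so that part is fine. But the paper also needs a second, different trace-norm estimate---for kernels with pointwise decay $|K|,|\partial_yK|\le A/(x+y)^2$ one gets $\|\chi_{[R,\infty)}K\chi_{[R,\infty)}\|_{\mathcal J_1}\lesssim A/\sqrt{R}$ via a Volterra-operator trick---to handle the piece coming from integrating the Hankel term by parts. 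The appearance of $\normH{3}{b}$, $\normH{2}{xb}$, $\normH{3}{x^2b}$ in $\normB{b}$ is tied precisely to bounding $\|b''\|_{L_1}$ and $\|tb'''\|_{L_1}$ in that piece; your remark that the seminorms arise from ``derivatives and accompanying powers $t^k$'' does not yet account for this.
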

The operator $Q_R^{\mathcal{B}}(b)$ is related to the operator 
in \eqref{2:bess_diff_est} by the formula \eqref{5:denominator_expr} below. 
This allows to obtain the estimate \eqref{1:bessel_speed}. Further, the 
asymptotic for the numerator in \eqref{2:det_relation_eq} follows from 
Lemma \ref{2:bessel_div} and the asymptotic result of Basor and Ehrhardt 
(see Theorem \ref{5:bessel_asymp}). This proves that 
$Z(b) = \exp(c_2^{\mathcal{B}}(b) + c_3^{\mathcal{B}}(b))$.

The rest of the paper has the following structure. 
In \S~4 we recall some facts and notation for 
trace class and Hilbert-Schmidt operators. 
Then in \S~5 we explain relation between the Fredholm 
determinant in Theorem \ref{1:BO_bessel} and determinantal 
point process with the Bessel kernel. In \S~6 we prove 
properties of the Wiener-Hopf factorization and deduce Lemma 
\ref{2:bessel_div}. \S~7 presents the proof of Lemma 
\ref{2:bess_diff_est}. We conclude the proof of Theorems 
\ref{1:BO_bessel} and \ref{1:KS_estimate} in \S~8.

\section{Trace class and Hilbert-Schmidt operators}
In this section we recall certain theorems for symmetrically-normed 
ideals. Here and below by $\mathcal{J}_1$ and $\mathcal{J}_2$ we 
denote the ideals of trace class and Hilbert-Schmidt operators 
respectively. For the basic definitions we refer the reader to 
\cite{Simon2010} and \cite{Simon2015}. Let $E$ be $\R$ or $\R_+ = [0, \infty)$.
\begin{definition}
	The operator $K$ on $L_2(E)$ is locally trace class 
	if for any bounded measurable subset $B\subset E$ 
	the operator $\chi_B K \chi_B$ is trace class. 
	Let $\mathcal{J}_1^{loc}(L_2(E))$ stand for the 
	space of locally trace class operators.
\end{definition}

Recall that for $K\in \mathcal{J}_1$ the Fredholm 
determinant $\det (I+K)$ is well defined. It is 
continuous with respect to the trace norm. We also 
define the regularized determinant as follows
\[
	\det{}_2 (I + K) = \exp(-\tr(K))\det(I+K).
\]
The introduced function is continuous with 
respect to the Hilbert-Schmidt norm. It is then 
extended by continuity to all Hilbert-Schmidt operators.

The following theorem implies that for a function 
$f\in L_1(\R_+)\cap L_\infty(\R_+)$ the determinants 
$\det (I + \chi_{[0, R]}B_f\chi_{[0, R]})$, $\det 
(I + \chi_{[0, R]}W_f\chi_{[0, R]})$ are well defined.
\begin{theorem}[
	{\cite[Theorem~3.11.9]{Simon2015}}]\label{3_0:mercer_th}
	Let $K$ be a continuous kernel on $[a, b]^2$ 
	and induce a positive self-adjoint operator 
	$K$ on $L_2[a, b]$. Then the operator $K$ is 
	trace-class. In addition, for any trace class 
	$K$ with continuous kernel we have
	\begin{equation}\label{3_0:diagonal_tr}
		\tr(K) = \int_a^bK(x, x)dx.
	\end{equation}
\end{theorem}
We will refer to formula \eqref{3_0:diagonal_tr} 
as to calculation of trace over the diagonal. 

Recall that the kernel of an integral operator 
is defined almost everywhere on $[a, b]^2$. Thus, 
if we change a kernel, satisfying conditions of Theorem 
\ref{3_0:mercer_th} on the diagonal, the formula 
\eqref{3_0:diagonal_tr} may fail. This explains the 
continuity requirement. Let us now extend the class 
of kernels, for which the formula \eqref{3_0:diagonal_tr} holds.

Let $K$ be a self-adjoint trace class operator. 
By the Hilbert-Schmidt theorem we have the spectral 
decomposition, that is its kernel can be chosen in the form
\begin{equation}\label{3_0_eq:spec_decomp}
	K(x, y) = \sum_{i\in\N}\lambda_i\phi_i(x)\phi^*_i(y),
\end{equation}
where $\{\phi_i\}_{i\in\N}$ forms an orthonormal 
basis. Now by definition the trace is equal to
\[
	\tr(K) = \int_a^b \sum_{i\in\N}\lambda_i\phi_i(x)\phi^*_i(x)dx,
\]
which is equal to $\int_a^b K(x, x)dx$ if we 
choose $K$ as in \eqref{3_0_eq:spec_decomp}. Observe 
that this does not require the kernel to be continuous, 
but the choice of the kernel holds up to values on 
subsets of $[a, b]^2$ having zero measure projections. 
Also observe that for a function $f\in L_\infty[a, b]$ 
the kernel of the operator $fK$ can be chosen as
\[
	(fK)(x, y) = \sum_{i\in\N}\lambda_i (f(x)\phi_i(x))\phi_i^*(y)
\]
and its trace equals $\int_a^b f(x)K(x, x)$. We will 
use these observations in the following section.

We will also employ the following theorem.
\begin{theorem}[{\cite[Theorem~2.2]{article}}]\label{3_0:HH_th}
	For $A, B \in \mathfrak{B}(\mathcal{H})$ having 
	a trace class commutator we have that $e^Ae^Be^{-A-B}- I$ 
	is trace class. For the determinant we have
	\begin{equation}
		\ln\det (e^{A}e^{B}e^{-A-B}) = \frac{1}{2}\tr([A, B]).
	\end{equation}
\end{theorem}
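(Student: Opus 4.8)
The plan is to prove the identity by a one-parameter deformation, differentiating $\log\det$ along the path $G(s) = e^{sA}e^{sB}e^{-s(A+B)}$, $s\in[0,1]$, which satisfies $G(0)=I$ and $G(1)=e^Ae^Be^{-A-B}$. The conceptual starting point is that although $A$ and $B$ are only bounded, any expression built from them that contains the commutator $[A,B]$ is trace class, since $\mathcal{J}_1$ is a two-sided ideal. Concretely, $\Gamma(s):=e^{-sB}Ae^{sB}-A$ satisfies $\Gamma(0)=0$ and $\Gamma'(s)=e^{-sB}[A,B]e^{sB}$, so $\Gamma(s)=\int_0^s e^{-uB}[A,B]e^{uB}\,du$ is trace class with $\|\Gamma(s)\|_{\mathcal{J}_1}\le s\,\|[A,B]\|_{\mathcal{J}_1}$. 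Because conjugation by the invertible bounded operator $e^{uB}$ preserves the trace of a trace-class operator (cyclicity: $\tr(e^{-uB}Ye^{uB})=\tr(Y)$ for $Y\in\mathcal{J}_1$), we get $\tr\Gamma(s)=\int_0^s\tr([A,B])\,du=s\,\tr([A,B])$.

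The next step is to compute $G(s)^{-1}G'(s)$ explicitly, with $G(s)^{-1}=e^{s(A+B)}e^{-sB}e^{-sA}$. Using the commutation relations $e^{-sA}Ae^{sA}=A$ and $e^{\mp sB}Be^{\pm sB}=B$, the three terms of $G'(s)$ collapse after cancellation to $G(s)^{-1}G'(s)=e^{s(A+B)}\bigl[\,e^{-sB}Ae^{sB}+B\,\bigr]e^{-s(A+B)}-(A+B)$. Writing $e^{-sB}Ae^{sB}+B=(A+B)+\Gamma(s)$ and noting that $A+B$ commutes with $e^{\pm s(A+B)}$, the $(A+B)$-terms cancel and one is left with $G(s)^{-1}G'(s)=e^{s(A+B)}\Gamma(s)e^{-s(A+B)}$. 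Hence $G(s)^{-1}G'(s)\in\mathcal{J}_1$, and by trace-invariance of conjugation once more, $\tr\!\bigl(G(s)^{-1}G'(s)\bigr)=\tr\Gamma(s)=s\,\tr([A,B])$.

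To give $\det G(s)$ meaning I would first establish that $G(s)-I\in\mathcal{J}_1$. Setting $K(s)=G(s)^{-1}G'(s)$, the path solves the linear equation $G'(s)=G(s)K(s)$ with $G(0)=I$, so $G(s)-I=\int_0^s G(u)K(u)\,du$; since $u\mapsto G(u)$ is continuous in operator norm (and uniformly bounded on $[0,1]$) and $u\mapsto K(u)$ is continuous in $\mathcal{J}_1$, the integrand $G(u)K(u)$ is a continuous $\mathcal{J}_1$-valued function and the integral lies in $\mathcal{J}_1$. As $G(s)$ is a product of invertible operators, $\det G(s)$ is well defined and nonzero, and the standard derivative formula for Fredholm determinants (see \cite{Simon2010}) gives $\tfrac{d}{ds}\log\det G(s)=\tr\!\bigl(G(s)^{-1}G'(s)\bigr)=s\,\tr([A,B])$. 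Integrating over $[0,1]$ yields $\log\det\bigl(e^Ae^Be^{-A-B}\bigr)=\int_0^1 s\,\tr([A,B])\,ds=\tfrac12\tr([A,B])$, which is the claim; trace-class-ness of $e^Ae^Be^{-A-B}-I$ is the special case $s=1$ of the preceding paragraph.

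The main obstacle is conceptual rather than computational: the entire content lies in the fact that $\tr([A,B])$ need not vanish even though $[A,B]\in\mathcal{J}_1$, precisely because $A$ and $B$ are individually only bounded (were they trace class, the trace of the commutator would vanish and the right-hand side would be zero). The one point demanding genuine care is therefore the justification that $\tr([A,B])$ is well defined and that conjugation leaves it invariant, $\tr(e^{-uB}[A,B]e^{uB})=\tr([A,B])$, which rests only on the ideal property of $\mathcal{J}_1$ and on $\tr(XY)=\tr(YX)$ for $X$ bounded and $Y\in\mathcal{J}_1$. Once this is in place, the explicit cancellation in $G^{-1}G'$, the trace-class-ness of $G-I$ via the integral equation, and the derivative-of-$\log\det$ formula are all routine.
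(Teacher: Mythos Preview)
The paper does not supply a proof of this statement; it is quoted from \cite[Theorem~2.2]{article} (Ehrhardt) and used as a black box. Your argument is correct and is essentially the standard proof: the deformation $G(s)=e^{sA}e^{sB}e^{-s(A+B)}$, the computation $G(s)^{-1}G'(s)=e^{s(A+B)}\Gamma(s)e^{-s(A+B)}$ with $\Gamma(s)=\int_0^s e^{-uB}[A,B]e^{uB}\,du$, and the integration of $\tfrac{d}{ds}\log\det G(s)=s\,\tr[A,B]$ are all sound, and your justification that $G(s)-I\in\mathcal{J}_1$ via the integral equation $G(s)-I=\int_0^s G(u)K(u)\,du$ is the right way to make the determinant well defined.
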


\section{Application to the determinantal point process with the Bessel kernel}
Let $E$ be $\R$ or $\R_+ = [0, \infty)$.
Recall that a configuration on $E$ is a not more than 
countable subset of $E$ without accumulation points. We 
denote the set of configurations by $\conf(E)$. 
It is endowed with certain $\sigma$-algebra of measurable 
subsets $\mathfrak{X}$ (see \cite{Soshnikov_2000}). A point 
process $\mathbb{P}$ is defined to be a probability measure 
on $(\conf(E), \mathfrak{X})$.

For a configuration $X\in \conf(E)$ and a 
measurable function $f:E\to\C$ define additive 
and multiplicative functionals by the following 
formulae respectively
\[
	S_f(X) = \sum_{x\in X}f(x),
\]
\[
	\Psi_{1+f}(X) = \prod_{x\in X}(1+f(x)).
\]
Point processes induce respective random variables.
\begin{definition}
	A point process $\mathbb{P}_K$ on $\conf(E)$ 
	is determinantal if there exists an operator 
	$K\in \mathcal{J}_1^{loc}(L_2(E))$, satisfying 
	the following condition for any bounded measurable 
	function $f$ with compact support $B=\supp f$
	\begin{equation}\label{6:dpp_def_eq}
		\mathbb{E}_K \Psi_{1+f} = \det (I + fK\chi_B).
	\end{equation}
\end{definition}

We consider the limit distribution of $S_f^R = S_{f(x/R)}$ 
as $R\to\infty$ for the determinantal point process with 
the Bessel kernel $\mathbb{P}_{J_\nu}$ \cite{Tracy_1994} 
on $\R_+$, induced by operator $B_{\chi_{[0, 1]}}$.

Theorem \ref{2:BO_identity} describes the considered 
limit for the sine process $\mathbb{P}_{\mathcal{S}}$ 
on $\R$, induced by operator $\mathcal{F}^{-1}\chi_{[-\pi, \pi]}
\mathcal{F}$ on $L_2(\R)$. To be precise, we formulate the following statement.

\begin{proposition}\label{6:lap_det}
	We have for $f\in L_1(E)\cap L_\infty(E)$
	\begin{equation}
		\mathbb{E}_{J_\nu}\Psi^R_{1+f} = 
		\det(I + \chi_{[0, R]}B_f\chi_{[0, R]}),
	\end{equation}
	\begin{equation}
		\mathbb{E}_{\mathcal{S}}\Psi_{1+f}^R = 
		\det(I + \chi_{[0, 2\pi R]}W_f\chi_{[0, 2\pi R]}).
	\end{equation}
\end{proposition}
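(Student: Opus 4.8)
The plan is to establish both identities by connecting the defining property of a determinantal point process, equation \eqref{6:dpp_def_eq}, to the dilated multiplicative functional $\Psi^R_{1+f}(X) = \prod_{x\in X}(1+f(x/R))$. The key observation is that $\Psi^R_{1+f} = \Psi_{1+g}$ where $g(x) = f(x/R)$, so applying \eqref{6:dpp_def_eq} directly gives $\mathbb{E}_K \Psi^R_{1+f} = \det(I + g K \chi_B)$ with $B = \supp g = R\cdot\supp f$. The entire task then reduces to computing how the integral kernel of $g K \chi_B$ transforms under dilation for the two specific kernels at hand, and recognizing the result as the restricted Bessel (respectively Wiener-Hopf) operator.

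First I would treat the Bessel case, where $K = B_{\chi_{[0,1]}}$. The natural device is the unitary dilation operator $(U_R h)(x) = R^{-1/2} h(x/R)$ on $L_2(\R_+)$. I would compute the kernel of $U_R^{-1} B_{\chi_{[0,1]}} U_R$ by a change of variables $t \mapsto t/R$ inside the integral \eqref{1:bess_def}; the factor $t\sqrt{xy}$ and the scaling of the Bessel arguments should conspire, because of the homogeneity built into \eqref{1:bess_def}, to convert the cutoff $\chi_{[0,1]}(t)$ into $\chi_{[0,R]}(t)$, yielding $U_R^{-1} B_{\chi_{[0,1]}} U_R = B_{\chi_{[0,R]}}$. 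Conjugation by the unitary $U_R$ preserves Fredholm determinants, and it intertwines the multiplication by $f(x/R)$ with multiplication by $f(x)$ and the projection $\chi_{[0,R]}$ with $\chi_{[0,1]}$. Combining these, $\det(I + f(x/R) B_{\chi_{[0,1]}} \chi_{[0,R]})$ becomes $\det(I + f\, B_{\chi_{[0,R]}} \chi_{[0,1]})$ after conjugation; I would then use the semigroup-type identity $B_{\chi_{[0,R]}} = B_{\chi_{[0,1]}\circ(\cdot/R)}$ consistent with the definition, together with the fact that $B_f$ depends on $f$ only through its restriction so that the spectral cutoff $\chi_{[0,1]}$ in the $t$-variable matches the spatial cutoff $\chi_{[0,R]}$. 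The cleanest route is to verify at the level of kernels that $f(x)\chi_{[0,R]}(x)\, B_{\chi_{[0,R]}}(x,y)\,\chi_{[0,R]}(y) = \chi_{[0,R]}(x)\big(\chi_{[0,R]} B_f \chi_{[0,R]}\big)(x,y)\chi_{[0,R]}(y)$ modulo the identity contribution, so that the Fredholm determinant equals $\det(I + \chi_{[0,R]} B_f \chi_{[0,R]})$ as claimed.

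The sine-process case is structurally identical but cleaner, since $K = \mathcal{F}^{-1}\chi_{[-\pi,\pi]}\mathcal{F}$ is a genuine convolution (Wiener-Hopf-type) kernel $K(x,y) = \sin\pi(x-y)/\pi(x-y)$, and the dilation $U_{2\pi R}$ rescales the spectral cutoff $\chi_{[-\pi,\pi]}$ to $\chi_{[-2\pi R \cdot \pi, \, \ldots]}$ in a way that, after matching the normalization $W_f = \chi_{\R_+}\mathcal{F}^{-1} f \mathcal{F}\chi_{\R_+}$, produces the operator $\chi_{[0,2\pi R]} W_f \chi_{[0,2\pi R]}$; the factor $2\pi$ in the window length is exactly the one absorbed by passing from the spectral cutoff at $\pm\pi$ to the Fourier variable used in the definition of $W_f$. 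I would also note that all the operators involved are locally trace class by Mercer's theorem (Theorem \ref{3_0:mercer_th}), since the Bessel and sine kernels are continuous and the cutoffs restrict to bounded intervals, so every Fredholm determinant above is well defined and the manipulations are legitimate.

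The main obstacle I anticipate is purely bookkeeping: pinning down the correct normalization constants and verifying the dilation covariance of the Bessel kernel \eqref{1:bess_def} exactly, i.e.\ checking that the Jacobian from the change of variables, the $\sqrt{xy}$ prefactor, and the scaling of the Bessel-function arguments combine to leave the kernel form invariant while transporting the cutoff correctly. The sine case hides the same subtlety in the appearance of the factor $2\pi$, which must be tracked through the definition $\mathcal{F}h = \sqrt{2\pi}\,\hat h$ and the spectral window $[-\pi,\pi]$. Once the dilation identity $U_R^{-1} B_{\chi_{[0,1]}} U_R = B_{\chi_{[0,R]}}$ (and its sine analogue) is verified, the rest follows immediately from the definition \eqref{6:dpp_def_eq} and unitary invariance of the determinant.
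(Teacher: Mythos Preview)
Your dilation-and-swap strategy is essentially the same idea the paper uses (the paper conjugates by the Hankel transform $H_\nu$ and the dilation $U_R$ in the Bessel case, and by translation plus Fourier transform in the sine case), so the geometric core of your argument is correct.

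However, there is a genuine gap at the very first step. You write that applying \eqref{6:dpp_def_eq} to $g(x)=f(x/R)$ gives $\mathbb{E}_K\Psi_{1+g}=\det(I+gK\chi_B)$ with $B=\supp g$. But \eqref{6:dpp_def_eq} is stated only for bounded measurable $g$ with \emph{compact support}; a generic $f\in L_1\cap L_\infty$ need not be compactly supported, and then neither is $g$. Moreover, even granting the formula, the operator $g\,B_{\chi_{[0,1]}}$ (or $g\,\mathcal{F}^{-1}\chi_{[-\pi,\pi]}\mathcal{F}$) is in general only Hilbert--Schmidt, not trace class, so the Fredholm determinant $\det(I+g\Pi)$ is not a priori defined. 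Your appeal to Mercer's theorem at the end only gives \emph{local} trace class, which is not enough here.

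The paper closes this gap with the intermediate proposition preceding the proof: it introduces the \emph{extended} determinant \eqref{6:reg_det_def} (regularized determinant times $\exp$ of the diagonal integral), proves that $\mathbb{E}_\Pi\Psi_{1+f}=\det(I+f\Pi)$ in this extended sense for all $f\in L_1(E,d\mu_\Pi)\cap L_\infty(E,d\mu_\Pi)$ by continuity from the compactly supported case, and then shows separately that (i) the diagonal integrals and (ii) the regularized determinants of $f\Pi$ and of $\chi_{[0,R]}B_f\chi_{[0,R]}$ agree. Step (ii) is exactly your unitary-equivalence argument, but it is applied to $\det_2$, where it is legitimate. Without this extension step your argument only proves the proposition for compactly supported $f$.
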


The Laplace transform of additive 
functionals is expressed via multiplicative 
functionals by the following formula
\[
	\mathbb{E}e^{\lambda S_b^R} = \mathbb{E}\Psi^R_{e^{\lambda b}}.
\]
Therefore Proposition \ref{6:lap_det} implies that it 
is equal to the Fredholm determinant $\det(\chi_{[0, R]}
B_{e^{\lambda b}}\chi_{[0, R]})$ for the Bessel kernel 
determinantal point process and $\det(\chi_{[0, 2\pi R]}
W_{e^{\lambda b}}\chi_{[0, 2\pi R]})$ for the sine process.

In order to prove Proposition \ref{6:lap_det} we follow Bufetov 
\cite[Section~2.9]{bufetov2019sineprocess} and extend the 
formula \eqref{6:dpp_def_eq}. Let $K$ be a Hilbert-Schmidt 
self-adjoint operator on $L_2(E)$. Choose the kernel $K$ 
to coincide with its spectral decomposition (see \S~4). 
Define the extended Fredholm determinant as follows
\begin{equation}\label{6:reg_det_def}
	\det (I+K) = \exp\left( \int_E K(x, x)dx \right)\det{}_2(I+K)
\end{equation}
if the integral of the diagonal exists. Observe, that 
the definition does not requre $K$ to be trace class, 
though it coincides in this case, so we keep the same notation.

Let $\Pi$ be a locally trace class orthogonal projectior 
on $L_2(E)$. By the Macchi-Soshnikov \cite{Macchi1975}, 
\cite{Soshnikov_2000} theorem $\Pi$ defines a determinantal 
measure $\mathbb{P}_\Pi$. Choose the kernel $\Pi(x, y)$ to 
coincide with the spectral decomposition of $\chi_B \Pi \chi_B$ 
for every compact $B\subset E$. Introduce the measure 
$d\mu_\Pi(x) = \Pi(x, x)dx$ on $E$.
\begin{proposition}
	If $f\in L_1(E, d\mu_\Pi)\cap L_\infty(E, d\mu_\Pi)$ 
	we have that $\Psi_{1+f}\in L_1(\conf(E), 
	\mathbb{P}_\Pi)$ and
	\begin{equation}\label{6:reg_expr_mf}
		\mathbb{E}_\Pi\Psi_{1+f} = \det(I+f\Pi).
	\end{equation}
	In other words, we have that multiplicative functionals 
	define a continuous mapping 
	\[
	\Psi_{1+(-)}: L_\infty(E, d\mu_\Pi)\cap 
	L_1(E, d\mu_\Pi)\to L_1(\conf(E), \mathbb{P}_\Pi).
	\]
\end{proposition}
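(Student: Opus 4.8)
The plan is to reduce the statement to the defining identity \eqref{6:dpp_def_eq} by exhausting $E$ with compact sets, and then to pass to the limit on both sides. First I would dispose of the case of bounded $f$ with compact support $B=\supp f$: here $f\Pi$ is trace class, and since $f=f\chi_B$ the cyclicity $\det(I+AB)=\det(I+BA)$ turns the right-hand side $\det(I+f\Pi\chi_B)$ of \eqref{6:dpp_def_eq} into $\det(I+f\Pi)$. Computing the trace over the diagonal (Section 4), $\tr(f\Pi)=\int_E f\,d\mu_\Pi$, so this ordinary determinant coincides with the extended one of \eqref{6:reg_det_def}. Thus \eqref{6:reg_expr_mf} holds for compactly supported bounded $f$, and everything reduces to an approximation argument.

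Fix compacts $B_n\uparrow E$ and put $f_n=f\chi_{B_n}$. The identity driving both limits is the reproducing property $\int_E\abs{\Pi(x,y)}^2\,dy=\Pi(x,x)$, which holds since $\Pi=\Pi^*=\Pi^2$. It gives $\norm{(f-f_n)\Pi}_{\mathcal{J}_2}^2=\int_E\abs{f-f_n}^2\,d\mu_\Pi$, and this tends to $0$ because $f\in L_2(E,d\mu_\Pi)$ (indeed $\abs{f}^2\le\normN{f}\abs{f}$ is $d\mu_\Pi$-integrable); likewise $\int_E f_n\,d\mu_\Pi\to\int_E f\,d\mu_\Pi$. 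Continuity of $\det{}_2$ in the Hilbert--Schmidt norm together with \eqref{6:reg_det_def} then yields $\det(I+f_n\Pi)\to\det(I+f\Pi)$, settling the right-hand side.

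For the left-hand side I would first manufacture an integrable majorant. Applying the compactly supported case to $\abs{f_n}=\abs{f}\chi_{B_n}$ gives $\mathbb{E}_\Pi\Psi_{1+\abs{f_n}}=\det(I+\abs{f_n}\Pi)$; as $\Psi_{1+\abs{f_n}}$ increases with $n$, monotone convergence on the left and the limit of the preceding paragraph on the right give $\mathbb{E}_\Pi\Psi_{1+\abs{f}}=\det(I+\abs{f}\Pi)<\infty$, so $\Psi_{1+\abs{f}}\in L_1(\conf(E),\mathbb{P}_\Pi)$. In particular $\Psi_{1+\abs{f}}<\infty$ almost surely, whence $\sum_{x\in X}\abs{f(x)}<\infty$ almost surely, the product defining $\Psi_{1+f}(X)$ converges absolutely, and $\Psi_{1+f_n}(X)\to\Psi_{1+f}(X)$ pointwise. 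Since $\abs{\Psi_{1+f_n}}\le\Psi_{1+\abs{f_n}}\le\Psi_{1+\abs{f}}$, dominated convergence gives $\Psi_{1+f}\in L_1$ and $\mathbb{E}_\Pi\Psi_{1+f_n}\to\mathbb{E}_\Pi\Psi_{1+f}$; combined with the preceding paragraph this is \eqref{6:reg_expr_mf}. I expect this step to be the crux: the whole argument hinges on upgrading the determinant identity for $\abs{f_n}$ into finiteness of $\mathbb{E}_\Pi\Psi_{1+\abs{f}}$ via monotone convergence, which is exactly what licenses the dominated passage to the limit.

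Finally, for continuity I would use the telescoping estimate for products, $\abs{\Psi_{1+f}(X)-\Psi_{1+g}(X)}\le\Psi_{1+\abs{f}+\abs{g}}(X)\,S_{\abs{f-g}}(X)$, and Cauchy--Schwarz. With $h=\abs{f}+\abs{g}$ one has $\mathbb{E}_\Pi\Psi_{1+h}^2=\mathbb{E}_\Pi\Psi_{1+(2h+h^2)}=\det(I+(2h+h^2)\Pi)$ by \eqref{6:reg_expr_mf} (note $2h+h^2\in L_1(E,d\mu_\Pi)\cap L_\infty(E,d\mu_\Pi)$), which stays bounded for $f,g$ in a bounded set; and the second moment of the linear statistic satisfies $\mathbb{E}_\Pi S_{\abs{f-g}}^2\le\norm{f-g}_{L_1(d\mu_\Pi)}^2+\norm{f-g}_{L_2(d\mu_\Pi)}^2$, using the two-point function $\rho_2(x,y)=\Pi(x,x)\Pi(y,y)-\abs{\Pi(x,y)}^2\le\Pi(x,x)\Pi(y,y)$. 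The right-hand side vanishes as $g\to f$ in $L_1(E,d\mu_\Pi)\cap L_\infty(E,d\mu_\Pi)$, which proves the asserted continuity.
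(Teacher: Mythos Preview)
Your argument is correct and proceeds along the same lines as the paper: establish the identity for compactly supported $f$, then pass to the limit using the $L_1(d\mu_\Pi)$-continuity of the diagonal integral and the $L_2(d\mu_\Pi)$-continuity (via $\norm{f\Pi}_{\mathcal{J}_2}^2=\int\abs{f}^2\,d\mu_\Pi$) of $\det{}_2$. The paper's own proof is much terser---it records exactly these two continuity facts and then simply asserts that the formula extends by density---whereas you supply the monotone/dominated convergence argument for $\Psi_{1+f_n}\to\Psi_{1+f}$ and the telescoping Cauchy--Schwarz estimate for continuity; these are precisely the details the paper leaves implicit.
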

\begin{proof}
	The formula holds for $f\in L_\infty(E)$ with 
	compact support due to the choice of the kernel 
	$\Pi(x, y)$. Express the extended determinant in 
	the statement by the definition to obtain two factors. 
	The exponent of the integral of the diagonal is 
	continuous with respect to $L_1(E, d\mu_\Pi)$ norm. 
	The regularized determinant is continuous with respect 
	to the Hilbert-Schmidt norm, which is equal to
	\[
		\norm{f\Pi}_{\mathcal{J}_2}^2 = \int_E\abs{f(x)\Pi(x, y)}^2dxdy 
		= \int_E\abs{f(x)}^2\Pi(x, y)\Pi(y, x)dxdy = \int_E\abs{f(x)}^2d\mu_\Pi(x)
	\]
	and is continuous with respect to $L_2(E, d\mu_\Pi)$ norm. 
	Therefore multiplicative functionals define a continuous 
	mapping on the dense subset of $L_1(E, d\mu_\Pi)\cap L_\infty(E, d\mu_\Pi)$ 
	 and, therefore, on all space. The extention of the continuous mapping coincides with the expectation of multiplicative functionals by the Beppo Levi Theorem, since one can monotonously approximate any multiplicative functional $\Psi_{1+\abs{f}}$, $f\in L_1(E, d\mu_\Pi)\cap L_\infty(E, d\mu_\Pi)$ by $\Psi_{1+\abs{f_n}}$, $\abs{f_n}=\chi_{[-n, n]}\abs{f}$. The formula \eqref{6:reg_expr_mf} 
	is extention by continuity of the formula \eqref{6:dpp_def_eq}.
\end{proof}
\begin{proof}[Proof of Proposition \ref{6:lap_det}]
	The proof for the sine process case may be found 
	in \cite[Lemma~3.3]{bufetov2019sineprocess}. 
	We proceed differently by proving the equality 
	between the extended determinants of $I + 
	f\mathcal{F}^{-1}\chi_{[-\pi, \pi]}\mathcal{F}$, 
	$I + fB_{\chi_{[0, 1]}}$ and the Fredholm determinants 
	of $I + \chi_{[0, 2\pi]}W_f\chi_{[0, 2\pi]}$, $I + 
	\chi_{[0, 1]}B_f\chi_{[0, 1]}$ respectively. Observe, 
	that our choice of kernels for the considered processes 
	coincides with the spectral decomposition by Theorem \ref{3_0:mercer_th}. 
	It is straightforward in both cases, that the integrals of the diagonals coincide.
	
	The regularized determinant is unitarily invariant. 
	Recall that $B_{\chi_{[0,1]}}$ is diagonalized by 
	the Hankel transfom $H_\nu$ (see \S~8 and 
	formula \eqref{5:bess_def_inf}). We use formula 
	\eqref{5:bess_def_inf} in order to establish 
	the following unitary equivalence
	\[
		fH_\nu \chi_{[0, 1]}H_\nu \sim H_\nu f H_\nu\chi_{[0, 1]},
	\]
	where the last operator has the same regularized 
	determinant as $\chi_{[0, 1]}B_f\chi_{[0, 1]}$. 
	Notably, the obtained operator is trace class 
	and the Fredholm determinant is well defined.
	
	For the sine kernel use the translation 
	operator $T_tf(x) = f(x+t)$ to proceed as follows
	\[
		\mathcal{F}^*\chi_{[-\pi, \pi]}\mathcal{F}f = 
		\mathcal{F}^*T_{-\pi}\chi_{[0, 2\pi]}T_{\pi}
		\mathcal{F}f = e^{-i\pi x}\mathcal{F}^* 
		\chi_{[0, 2\pi]}\mathcal{F}fe^{i\pi x}
		\sim \mathcal{F}^* \chi_{[0, 2\pi]}\mathcal{F}f
	\]
	to obtain a unitary equivalence. The Fourier 
	transform gives the following unitary equivalence
	\[
		\mathcal{F}^* \chi_{[0, 2\pi]}\mathcal{F}f
		\sim \chi_{[0, 2\pi]}\mathcal{F}f\mathcal{F}^*,
	\]
	where the determinant of the obtained operator 
	coincides with the one for $\chi_{[0, 2\pi]}W_f\chi_{[0, 2\pi]}$. 
	Again, the obtained operator is trace class. Lastly, it is 
	left to observe that the Fourier and Hankel transforms 
	commute with the dilation unitary operator $U_R f(x) = R^{-1/2}f(x/R)$.
\end{proof}

\section{Wiener-Hopf factorization on $H_1(\R)$}

Recall that in general Wiener-Hopf operator as a 
mapping $\mathbf{W}:f\mapsto W_f$, $L_\infty(\R)\to
\mathfrak{B}(L_2(\R_+))$ is not a Banach algebra 
homomorphism. But it can be restricted to certain 
subalgebras, on which it preserves multiplication.

The space $H_1(\R)$ is a Banach algebra. 
The spaces
\[
H_1^\pm(\R) = \{f\in H_1(\R):\supp\hat{f}\subset\R_\pm\}
\]
are Banach subalgebras since $\supp(\hat{f}*\hat{g}) 
\subset \supp\hat{f} + \supp\hat{g}$. 
It is clear that operators $P_\pm = \mathcal{F}^{-1}
\chi_{\R_\pm}\mathcal{F}$ map $H_1(\R)$ into itself 
and $P_\pm H_1(\R)=H_1^\pm(\R)$.
Then $\mathbf{W}$ restricted to $H_1^\pm(\R)$ is a 
Banach algebra homomorphism. As before, we consider 
$H_1(\R_+)$ to be embedded into $H_1(\R)$ by even 
continuation, which defines the decomposition for 
functions on $\R_+$.

We also let $H_1^\pm(\R)\oplus \C = \{f+c:f\in H_1^\pm(\R), 
c\in\C\}$ be the Banach algebras $H_1^\pm(\R)$ with adjoined unit. 
In these algebras an element $\Phi(b)\in H_1^\pm(\R)\oplus\C$ is 
well defined for $b\in H_1^\pm\oplus\C$ and entire $\Phi$ as a 
converging in norm power series from $b$.
The following statement shows that Wiener-Hopf operators 
$W_f$ have kernels for $f\in H_1(\R)$.
\begin{lemma}[{\cite[Proposition 5.2]{Basor_2003}}]
	Let $a$ be a function on $\R$ such that 
	$\norm{a}_{H_{1/2}} + \norm{\hat{a}}_{L_1}<\infty$. 
	Then $\mathcal{F}^{-1}a\mathcal{F}$ is an integral 
	operator on $L_2(\R)$ with the kernel $k(x, y) = \hat{a}(x-y)$.
	\label{3:WH_kernels}
\end{lemma}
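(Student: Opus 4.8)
The plan is to compute the action of the Fourier multiplier $\mathcal{F}^{-1}a\mathcal{F}$ on a dense class of regular functions, read off the kernel, and then extend to all of $L_2(\R)$ by continuity. The first observation is that $\norm{\hat{a}}_{L_1}<\infty$ guarantees, by Fourier inversion, that $a$ (modified on a null set) is continuous and bounded, with the pointwise formula $a(\xi)=\int_\R e^{it\xi}\hat{a}(t)\,dt$ holding for every $\xi$; in particular $\normN{a}\le\norm{\hat{a}}_{L_1}$, so that multiplication by $a$, and hence $\mathcal{F}^{-1}a\mathcal{F}$, is bounded on $L_2(\R)$. The condition $\norm{a}_{H_{1/2}}<\infty$ supplies $a\in L_2(\R)$, which ensures that $\hat{a}$ is a genuine function and that the Plancherel and pointwise notions of the transform coincide.

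Next I would take $h\in L_2(\R)$ with $\mathcal{F}h\in L_1(\R)$, for instance $h$ in the Schwartz class, which is dense in $L_2(\R)$. Writing $\mathcal{F}h=\sqrt{2\pi}\,\hat{h}$ and inserting the inversion formula for $a$ gives
\[
	(\mathcal{F}^{-1}a\mathcal{F}h)(y) = \frac{1}{\sqrt{2\pi}}\int_\R e^{iy\xi}\left(\int_\R e^{it\xi}\hat{a}(t)\,dt\right)(\mathcal{F}h)(\xi)\,d\xi.
\]
The crux of the argument is an application of Fubini's theorem, which is legitimate here precisely because $\hat{a}\in L_1(\R)$ and $\mathcal{F}h\in L_1(\R)$ render the double integrand absolutely integrable; note that $a$ itself need not lie in $L_1(\R)$, which is exactly why the computation must be routed through $\hat{a}$ rather than through $a$ directly. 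This is the step I expect to be the main obstacle, since a naive interchange against the symbol $a$ is unjustified. Interchanging the order of integration and recognizing the inner $\xi$-integral as $(\mathcal{F}^{-1}\mathcal{F}h)(y+t)=h(y+t)$ yields
\[
	(\mathcal{F}^{-1}a\mathcal{F}h)(y) = \int_\R \hat{a}(t)\,h(y+t)\,dt = \int_\R \hat{a}(x-y)\,h(x)\,dx,
\]
after the substitution $x=y+t$, which is exactly the asserted integral operator with kernel $k(x,y)=\hat{a}(x-y)$.

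Finally I would extend this identity from the dense class to all of $L_2(\R)$. The left-hand operator is bounded by the remark above, and the right-hand side is the convolution of $h$ with the $L_1$-function $t\mapsto\hat{a}(-t)$, so Young's inequality bounds it by $\norm{\hat{a}}_{L_1}\norm{h}_{L_2}$ and, moreover, guarantees that the defining integral $\int_\R\hat{a}(x-y)h(x)\,dx$ converges for almost every $y$ and in fact for every $h\in L_2(\R)$. Since the two bounded operators agree on a dense subspace, they coincide on $L_2(\R)$, which completes the proof.
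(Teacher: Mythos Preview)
The paper does not supply a proof of this lemma at all; it is quoted verbatim from \cite[Proposition~5.2]{Basor_2003} and used as a black box in the subsequent proof of Proposition~\ref{3:WH_properties}. So there is no ``paper's own proof'' to compare against.

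On its own merits your argument is correct and is the standard one: represent $a$ pointwise via the inversion formula $a(\xi)=\int_\R e^{it\xi}\hat a(t)\,dt$ (legitimate since $\hat a\in L_1$), compute the action of the multiplier on a Schwartz function by Fubini, recognise the inner integral as Fourier inversion, and then extend to $L_2$ using that both the multiplier (by $\normN{a}\le\norm{\hat a}_{L_1}$) and the convolution operator (by Young's inequality) are bounded. The role you assign to the hypothesis $a\in H_{1/2}$ is also the right one: it places $a$ in $L_2$, so that $\hat a$ is an honest function and the $L_2$- and pointwise notions of the transform agree.

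One cosmetic point: in your displayed computation you use $y$ as the output variable and $x$ as the integration variable, arriving at $(\mathcal{F}^{-1}a\mathcal{F}h)(y)=\int_\R\hat a(x-y)h(x)\,dx$. In the paper's convention the kernel is written so that the \emph{first} argument is the output variable, i.e.\ $(Kh)(x)=\int k(x,y)h(y)\,dy$; with that convention your formula reads $k(x,y)=\hat a(y-x)$ rather than $\hat a(x-y)$. This is only a labelling issue and does not affect the substance of the proof, but it is worth aligning the variables with the paper's usage so that the subsequent kernel manipulations in Proposition~\ref{3:WH_properties} match.
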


\begin{proposition}\label{3:WH_properties}
	
	1. The map $\boldsymbol{W}\big|_{H_1^\pm(\R)}: f \mapsto W_f$ 
	defines homomorphisms of the Banach algebras 
	$H_1^\pm(\R)\oplus \C\to \mathfrak{B}(L_2(\R_+))$.
	
	2. We have for $b_{\pm}\in H_1^\pm(\R)\oplus \C$
	\begin{align*}
		&\chi_{[0, R]}W_{b_+} = 
		\chi_{[0, R]}W_{b_+}\chi_{[0, R]} 
		&W_{b_+}\chi_{[R, \infty)} = 
		\chi_{[R, \infty)}W_{b_+}\chi_{[R, \infty)}\\
		&W_{b_-}\chi_{[0, R]} = 
		\chi_{[0, R]}W_{b_-}\chi_{[0, R]}
		 &\chi_{[R, \infty)}W_{b_-} = 
		 \chi_{[R, \infty)}W_{b_-}\chi_{[R, \infty)}.
	\end{align*}
	
	3. We have that $W_{b_-}W_{b_+}=W_{b_-b_+}$ 
	for $b_\pm\in H_1^\pm(\R)\oplus\C$.
	
	In particular, we have $W_{e^{b_\pm}} = e^{W_{b_\pm}}$ 
	and $W_{e^b} = e^{W_{b_-}}e^{W_{b_+}}$.
\end{proposition}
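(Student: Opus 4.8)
The plan is to reduce all three parts to the one-sided (triangular) structure of the convolution operators $M_{b_\pm}:=\mathcal{F}^{-1}b_\pm\mathcal{F}$ on $L_2(\R)$. First I would invoke Lemma \ref{3:WH_kernels}: its hypotheses hold for $b_\pm\in H_1^\pm(\R)$, since $H_1(\R)\subset H_{1/2}(\R)$ and $\norm{\hat b_\pm}_{L_1}\le C\norm{b_\pm}_{H_1}$ by Cauchy--Schwarz, so $M_{b_+}$ has integral kernel $\hat b_+(x-y)$. Because $\supp\hat b_+\subset\R_+$, this kernel vanishes unless $x\ge y$, so $(M_{b_+}g)(x)$ depends only on $\{g(y):y\le x\}$; dually $M_{b_-}$ has kernel $\hat b_-(x-y)$ supported on $x\le y$. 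Equivalently, writing $\chi_\pm=\chi_{\R_\pm}$, the operator $M_{b_+}$ maps $L_2[a,\infty)$ into itself and $M_{b_-}$ maps $L_2(-\infty,a]$ into itself for every $a$; in projection form $\chi_{(-\infty,a)}M_{b_+}\chi_{[a,\infty)}=0$ and $\chi_{(a,\infty)}M_{b_-}\chi_{(-\infty,a]}=0$. Constants adjoined to the algebras contribute multiples of the identity, $W_c=cI$, carried along by linearity.

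With $W_f=\chi_+M_f\chi_+$, part 2 is then read off directly. For instance $\chi_{[0,R]}W_{b_+}=\chi_{[0,R]}M_{b_+}\chi_+$, and splitting $\chi_+=\chi_{[0,R]}+\chi_{(R,\infty)}$ the second piece is annihilated by causality of $M_{b_+}$, giving $\chi_{[0,R]}W_{b_+}=\chi_{[0,R]}W_{b_+}\chi_{[0,R]}$; likewise $M_{b_+}\chi_{[R,\infty)}$ is supported in $[R,\infty)$, so the outer $\chi_+$ in $W_{b_+}\chi_{[R,\infty)}$ collapses to $\chi_{[R,\infty)}$. The two identities for $b_-$ follow symmetrically from anti-causality.

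For part 3 I would use the product formula: inserting $\chi_++\chi_-=I$ between the two factors and using that multiplication operators in the Fourier variable multiply, $M_{b_-}M_{b_+}=M_{b_-b_+}$,
\[
	W_{b_-}W_{b_+}=W_{b_-b_+}-\chi_+M_{b_-}\chi_-M_{b_+}\chi_+ .
\]
The correction vanishes because $\chi_-M_{b_+}\chi_+=0$, proving $W_{b_-}W_{b_+}=W_{b_-b_+}$. The identical computation with both symbols in one half-algebra gives $W_{b_+}W_{c_+}=W_{b_+c_+}$ and $W_{b_-}W_{c_-}=W_{b_-c_-}$, the corrections being killed by $\chi_-M_{c_+}\chi_+=0$ and $\chi_+M_{b_-}\chi_-=0$ respectively; with $W_1=I$ this establishes the unital homomorphisms of part 1. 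For the exponential identities, I note that $\norm{W_f}\le\norm{f}_{L_\infty}\le C\norm{f}_{H_1}$ by the Sobolev embedding $H_1(\R)\hookrightarrow L_\infty(\R)$, so $\boldsymbol{W}$ is bounded and commutes with the norm-convergent series of the functional calculus; applying the homomorphism termwise yields $W_{e^{b_\pm}}=\sum_n W_{b_\pm}^n/n!=e^{W_{b_\pm}}$, and factoring $e^b=e^{b_-}e^{b_+}$ and invoking part 3 gives $W_{e^b}=e^{W_{b_-}}e^{W_{b_+}}$.

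The hard part is the asymmetry hidden in part 3: the Hankel-type correction $\chi_+M_f\chi_-M_g\chi_+$ vanishes exactly when the inner projection $\chi_-$ annihilates the relevant one-sided convolution, which it does for the ordered product $W_{b_-}W_{b_+}$ but not for $W_{b_+}W_{b_-}$. So the main points to get right are respecting this ordering throughout and verifying that Lemma \ref{3:WH_kernels} genuinely applies, which makes the causal/anti-causal kernel description rigorous rather than merely formal; the remaining algebra and the exchange of $\boldsymbol{W}$ with the power series are then routine.
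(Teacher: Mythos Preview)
Your proof is correct and follows essentially the same approach as the paper: both arguments rest on the fact that $\supp\hat b_\pm\subset\R_\pm$ forces the convolution kernel $\hat b_\pm(x-y)$ to be one-sided, and then read off the triangularity/homomorphism properties from this. The only cosmetic difference is that the paper carries out the argument at the level of explicit kernel identities with characteristic functions (e.g.\ $\chi_{[0,R]}(x)\chi_{\R_+}(x-y)=\chi_{[0,R]}(x)\chi_{\R_+}(x-y)\chi_{[0,R]}(y)$), whereas you phrase the same computations operator-theoretically via causality of $M_{b_\pm}$ and the Wiener--Hopf/Hankel splitting $W_fW_g=W_{fg}-\chi_+M_f\chi_-M_g\chi_+$.
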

\begin{proof}
	It is enough to prove the statements for 
	$b_\pm\in H_1^\pm(\R)$. We give proofs 
	for $H_1^+(\R)$, the case of 
	$H_1^-(\R)$ is proved similarly.
	
	1. By Lemma \ref{3:WH_kernels} the Wiener-Hopf 
	operators have kernels. Obviously, we have 
	$\hat{b}_\pm(x) = \chi_{\R_\pm}(x)\hat{b}_\pm(x)$. 
	Let $a, b$ be functions from $H_1^+(\R)$.
	The kernel of $W_{ab}$ equals
	\begin{multline*}
		W_{ab}(x, y) = \chi_{\R_+^2}(x, y)\int_\R 
		\hat{a}(x-y-t)\chi_{\R_+}(t)\hat{b}(t)\chi_{\R_\pm}(t)dt =\\
		= \chi_{\R_+^2}(x, y)\int_\R\hat{a}(x-t)
		\chi_{\R_+}(t-y)\hat{b}(t-y)dt = \\
		= \chi_{\R_+^2}(x, y)\int_\R\hat{a}(x-t)
		\chi_{\R_+}(t)\hat{b}(t-y)dt = (W_a W_b)(x, y),
	\end{multline*}
	where the identity $\chi_{\R_+}(y)\chi_{\R_+}(t-y) = 
	\chi_{\R_+}(y)\chi_{\R_+}(t-y)\chi_{\R_+}(t)$ was used.
	
	2. Since $\chi_{[0, R]}(x)\chi_{\R_+}(x-y) = 
	\chi_{[0, R]}(x)\chi_{\R_+}(x-y)\chi_{[0, R]}(y)$ 
	for all $(x, y) \in \R_+$, one can write for the 
	kernel of $(\chi_{[0, R]}W_{b_+})(x, y)$
	\begin{multline}
		(\chi_{[0, R]}W_{b_+})(x, y) = \chi_{[0, R]}(x)
		\hat{b}_+(x-y)\chi_{\R_+}(y) =\\= \chi_{[0, R]^2}(x, y)\hat{b}_+(x-y)
		= (\chi_{[0, R]}W_{b_+}\chi_{[0, R]})(x, y).
	\end{multline}
	The proof for $W_{b_+}\chi_{[R, +\infty)}$ similarly follows from 
	\[
	\chi_{[R, \infty)}(y)\chi_{\R_+}(x-y)\chi_{\R_+}(x) = 
	\chi_{[R, \infty)}(y)\chi_{\R_+}(x-y)\chi_{[R, \infty)}(x).
	\]
	
	3. Again write the kernel
	\[
		W_{b_-b_+}(x, y) = \chi_{\R_+^2}(x, y)
		\int_\R\hat{b}_-(x - y - t)\hat{b}_+(t)\chi_{\R_+}(t)dt.
	\]
	As for the first part, the statement follows from 
	$\chi_{\R_+}(y)\chi_{\R_+}(t-y) = \chi_{\R_+}(y)
	\chi_{\R_+}(t-y)\chi_{\R_+}(t)$.
\end{proof}

\begin{proof}[Proof of Lemma \ref{2:bessel_div}]
	Properties of Fredholm determinants and the 
	first and second statements of Proposition 
	\ref{3:WH_properties} yield the following expression
	\begin{multline*}
		\det (\chi_{[0, R]}B_{e^b}\chi_{[0, R]}) = 
		\det(\chi_{[0, R]}W_{e^{b_+}}W_{e^{-b_+}}B_{e^b}
		W_{e^{-b_-}}W_{e^{b_-}}\chi_{[0, R]}) =\\
		= \det (\chi_{[0, R]}W_{e^{-b_+}}B_{e^b}
		W_{e^{-b_-}}\chi_{[0, R]})\det (e^{\chi_{[0, R]}W_{b_-}
			\chi_{[0, R]}}e^{\chi_{[0, R]}W_{b_+}\chi_{[0, R]}}).
	\end{multline*}
	It remains to prove that
	\begin{equation}\label{3:div_expr}
		\det (e^{\chi_{[0, R]}W_{b_-}\chi_{[0, R]}}
		e^{\chi_{[0, R]}W_{b_+}\chi_{[0, R]}}) = e^{R\hat{b}(0)}.
	\end{equation}
	By Theorem \ref{3_0:mercer_th} the operator 
	$\chi_{[0, R]}W_b\chi_{[0, R]}$ is trace class for 
	$b\in L_1(\R_+)\cap L_\infty(\R_+)$, so the left-hand 
	side of \eqref{3:div_expr} is equal to
	\[
		\det (e^{\chi_{[0, R]}W_{b_-}\chi_{[0, R]}}
		e^{\chi_{[0, R]}W_{b_+}\chi_{[0, R]}}e^{-\chi_{[0, R]}
			W_b\chi_{[0, R]}})e^{\tr(\chi_{[0, R]}W_b\chi_{[0, R]})},
	\]
	where $\tr(\chi_{[0, R]}W_b\chi_{[0, R]}) = 
	R\hat{b}(0)$ is calculated over the diagonal.
	A direct calculation shows that the operator 
	$\chi_{[0, R]}W_f\chi_{[0, R]}$ is Hilbert-Schmidt 
	for $f\in H_1(\R_+)$. Hence the commutator $[\chi_{[0, R]}
	W_{b_-}\chi_{[0, R]}, \chi_{[0,R]}W_{b_+}\chi_{[0, R]}]$ 
	is trace class and its trace is equal to zero. 
	Theorem \ref{3_0:HH_th} gives
	\[
		\det (e^{\chi_{[0, R]}W_{b_-}\chi_{[0, R]}}
		e^{\chi_{[0, R]}W_{b_+}\chi_{[0, R]}}
		e^{-\chi_{[0, R]}W_b\chi_{[0, R]}}) = 1
	\]
	This finishes the proof of \eqref{3:div_expr}.
\end{proof}

\section{Proof of Lemma \ref{2:bess_diff_est}}
In this section Lemma \ref{2:bess_diff_est} 
is established. Our calculations follow Basor 
and Ehrhardt \cite[Lemmata~2.6, 2.7, 2.8]{Basor_2003}. The section goes as follows. In Propositions \ref{4:est_separable}, \ref{4:trunc_trace} and Corollary \ref{4:trunc_corr} we establish estimates on trace norms of integral operators with kernels of certain forms. Then we recall necessary properties of Bessel functions. Lastly, we proceed to the proof of Lemma \ref{2:bess_diff_est} after proving properties of functions in the introduced space $\mathcal{B}$ in Lemma \ref{4:B_space_prop}.
\begin{proposition}\label{4:est_separable}
	Let $K$ be an integral operator on 
	$L_2[R, \infty)$ with the following kernel
	\[
		K(x, y) = \int_{\R_+} a(t)h_1(x, t)h_2(y, t)dt,
	\]
	where $h_1, h_2, a$ are some measurable functions.
	Then there is an estimate
	\begin{equation}\label{4:est_separable_eq}
		\norm{K}_{\mathcal{J}_1}\le\int_0^\infty 
		\abs{a(t)}\left(\int_R^\infty \abs{h_1(x, t)}^2
		dx\right )^{1/2}\left (\int_R^\infty 
		\abs{h_2(y, t)}^2dy\right)^{1/2}dt
	\end{equation}
	if the right-hand side is finite.
\end{proposition}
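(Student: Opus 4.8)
The plan is to exploit the fact that, for each fixed value of the parameter $t$, the kernel $a(t)h_1(x,t)h_2(y,t)$ is that of a rank-one operator, so that $K$ is a continuous superposition over $t\in\R_+$ of rank-one operators, and the claimed bound becomes the triangle inequality for this superposition combined with the elementary trace-norm formula for a rank-one operator. Concretely, I would first record that the rank-one integral operator $A_t$ on $L_2[R,\infty)$ with kernel $a(t)h_1(x,t)h_2(y,t)$ has trace norm $\norm{A_t}_{\mathcal{J}_1}=\abs{a(t)}\,\norm{h_1(\cdot,t)}_{L_2[R,\infty)}\norm{h_2(\cdot,t)}_{L_2[R,\infty)}$, since a rank-one operator $u\otimes v$ satisfies $\norm{u\otimes v}_{\mathcal{J}_1}=\norm{u}\,\norm{v}$ and complex conjugation preserves the $L_2$ norm. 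The right-hand side of the assertion is then exactly $\int_{\R_+}\norm{A_t}_{\mathcal{J}_1}\,dt$, whose finiteness is the standing hypothesis.

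To turn this into a proof I would use the variational characterization of the trace norm: for any orthonormal systems $\{\phi_j\}$, $\{\psi_j\}$ in $L_2[R,\infty)$ one has $\sum_j\abs{\langle\psi_j,K\phi_j\rangle}\le\norm{K}_{\mathcal{J}_1}$, with the supremum over all such systems equal to $\norm{K}_{\mathcal{J}_1}$ and with finiteness of this supremum already forcing $K\in\mathcal{J}_1$. Writing $\langle\psi_j,K\phi_j\rangle=\int_{\R_+}a(t)\,\langle\psi_j,h_1(\cdot,t)\rangle\,\langle\overline{h_2(\cdot,t)},\phi_j\rangle\,dt$, I would bound $\sum_j\abs{\langle\psi_j,K\phi_j\rangle}$ by $\int_{\R_+}\abs{a(t)}\sum_j\abs{\langle\psi_j,h_1(\cdot,t)\rangle}\,\abs{\langle\overline{h_2(\cdot,t)},\phi_j\rangle}\,dt$, apply the Cauchy--Schwarz inequality in the index $j$, and then invoke Bessel's inequality for each orthonormal system to obtain $\sum_j\abs{\langle\psi_j,h_1(\cdot,t)\rangle}^2\le\norm{h_1(\cdot,t)}_{L_2[R,\infty)}^2$ and the analogous bound for $h_2$. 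Assembling these produces, inside the integral, the pointwise-in-$t$ factor $\abs{a(t)}\norm{h_1(\cdot,t)}_{L_2[R,\infty)}\norm{h_2(\cdot,t)}_{L_2[R,\infty)}$, and taking the supremum over $\{\phi_j\}$, $\{\psi_j\}$ yields the stated inequality.

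The one step needing care is the interchange of the summation over $j$ with the integration over $t$, and the passage from the matrix-element integrand to the product of $L_2$ norms. After taking absolute values all integrands are nonnegative, so Tonelli's theorem justifies the interchange, and the finiteness of the right-hand side guarantees the bounds are not vacuous. A conceptually cleaner alternative, which I would also mention, is to realize $K=\int_{\R_+}A_t\,dt$ as a Bochner integral in $\mathcal{J}_1$: Bochner integrability follows from the hypothesis $\int_{\R_+}\norm{A_t}_{\mathcal{J}_1}\,dt<\infty$ once weak measurability of $t\mapsto A_t$ is checked, whereupon $\norm{K}_{\mathcal{J}_1}\le\int_{\R_+}\norm{A_t}_{\mathcal{J}_1}\,dt$ is immediate and the identification of the Bochner integral with the given integral operator is carried out by comparing matrix elements via Fubini.

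In either route the functional-analytic content is light: it is the rank-one trace-norm identity together with Bessel's inequality. I therefore expect the main obstacle to be purely the measurability and Fubini/Tonelli bookkeeping needed to legitimately move the trace-class estimate under the $t$-integral; I would keep this explicit but brief, since the hypothesis of finiteness of the right-hand side supplies exactly the dominating integrand required.
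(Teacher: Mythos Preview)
Your proposal is correct and follows essentially the same approach as the paper: a variational characterization of the trace norm, Fubini to pull the estimate inside the $t$-integral, and Cauchy--Schwarz plus Bessel/Parseval to reduce each $t$-slice to $\abs{a(t)}\,\norm{h_1(\cdot,t)}_{L_2}\norm{h_2(\cdot,t)}_{L_2}$. The only cosmetic difference is that the paper uses the duality form $\norm{K}_{\mathcal{J}_1}=\sup_{\norm{B}=1}\abs{\tr(BK)}$ with a single orthonormal basis, whereas you use the equivalent characterization via pairs of orthonormal systems; your alternative Bochner-integral packaging is also fine but not what the paper does.
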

\begin{proof}
	Let $f, g\in L_2[R, \infty)$. Denote 
	$h^t_i(x) = h_i(x, t)$, $i=1, 2$. Then we have
	\begin{multline}\label{5_eq:l_5_1}
		\langle f, Kg\rangle_{L_2} = \int_R^\infty\int_R^\infty
		\left (\int_0^\infty a(t)h_1(x, t)h_2(y, t)dt\right )
		g(y)dy f^*(x)dx =\\
		=\int_0^\infty a(t)\left ( \int_R^\infty h_1(x, t)f^*(x)
		dx\right )\left ( \int_R^\infty h_2(y, t)g(y)dy\right) dt 
		=\\= \int_0^\infty a(t)\langle f, h^t_1\rangle_{L_2} 
		\langle (h^t_2)^*, g\rangle_{L_2} dt,
	\end{multline}
	where the Fubini theorem may be applied since the 
	function under the integral is absolutely integrable by 
	the assumption of the proposition. Recall that 
	$\mathfrak{B}(L_2[R, \infty)) \simeq \mathcal{J}_1^*(L_2[R, \infty))$ 
	with $A\in\mathfrak{B}(L_2[R, \infty)$ acting by $X\mapsto \tr(AX)$. 
	For the norm of $K$, considering it as a functional on 
	$\mathfrak{B}(L_2[R, \infty))$, using definition of trace 
	and expression \eqref{5_eq:l_5_1} we write 
	\[
		\norm{K}_{\mathcal{J}_1} = \sup_{B\in\mathfrak{B}(L_2), 
			\norm{B}=1}\abs{\tr(BK)} =\\= \sup_{B\in\mathfrak{B}(L_2), 
			\norm{B}=1}\abs*{\sum_{i\in \N}\int_0^\infty a(t)\langle f_i, 
			B h_1^t\rangle_{L_2}\langle (h_2^t)^*, f_i\rangle_{L_2}dt},
	\]
	where $\{f_i\}_{i\in\N}$ is an arbitrary orthonormal 
	basis in $L_2[R, \infty)$. We next use the Cauchy-Bunyakovsky-Schwarz 
	inequality to obtain
	\[
		\sum_{i\in \N}\abs{\langle f_i, B h_1^t\rangle_{L_2}
			\langle (h_2^t)^*, f_i\rangle_{L_2}} \le\\ \le 
			\left (\sum_{i\in\N}\abs{\langle f_i, B h_1^t\rangle_{L_2}}^2\right)^{1/2}
			\left (\sum_{i\in\N}\abs{\langle (h_2^t)^*, f_i\rangle_{L_2}}^2\right)^{1/2} 
			\le \norm{h_1^t}_{L_2}\norm{h_2^t}_{L_2},
	\]
	which finishes the proof.
\end{proof}
\begin{proposition}\label{4:trunc_trace}
	Let a kernel $K(x, y)$ of an integral operator 
	$K$ be absolutely continuous with respect to $y$ on 
	$[a, b]$. Assume that $K = K\chi_{[a, b]}$. Consider 
	the operator $\partial_yK$ with the kernel 
	$\frac{\partial K(x, y)}{\partial y}$ and assume 
	that $\partial_y K\in\mathcal{J}_2$. Then we have
	\begin{equation}\label{4:trunc_trace_eq}
		\norm{K}_{\mathcal{J}_1}\le\norm{K} + 
		\frac{(b-a)}{\sqrt{2}}\norm{\partial_y K}_{\mathcal{J}_2}.
	\end{equation}
\end{proposition}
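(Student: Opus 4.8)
The plan is to split $K$ into its average in the $y$-variable and a fluctuation governed by $\partial_y K$, and to estimate the two pieces separately. Since $K=K\chi_{[a,b]}$, the kernel vanishes for $y\notin[a,b]$, and absolute continuity in $y$ on $[a,b]$ gives $K(x,y)-K(x,s)=\int_s^y\partial_{y'}K(x,y')\,dy'$ for all $s,y\in[a,b]$. Averaging this identity over $s\in[a,b]$ produces the decomposition $K=K_1+K_2$ with
\[
K_1(x,y)=\frac{\chi_{[a,b]}(y)}{b-a}\int_a^b K(x,s)\,ds,\qquad
K_2(x,y)=\frac{\chi_{[a,b]}(y)}{b-a}\int_a^b\!\int_s^y\partial_{y'}K(x,y')\,dy'\,ds.
\]
I would then add the two bounds by the triangle inequality for $\norm{\cdot}_{\mathcal{J}_1}$; the only operator-theoretic input needed is the submultiplicativity $\norm{AB}_{\mathcal{J}_1}\le\norm{A}_{\mathcal{J}_2}\norm{B}_{\mathcal{J}_2}$ for Hilbert--Schmidt $A,B$. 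The decomposition simultaneously shows that $K$ is trace class.

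The term $K_1$ is rank one: $K_1g=\frac{1}{b-a}\,\phi\,\langle g,\chi_{[a,b]}\rangle$ with $\phi=K\chi_{[a,b]}$, i.e. $\phi(x)=\int_a^b K(x,s)\,ds$. Hence $\norm{K_1}_{\mathcal{J}_1}=\norm{\phi}_{L_2}\,\norm{\chi_{[a,b]}}_{L_2}/(b-a)=\norm{\phi}_{L_2}/\sqrt{b-a}$. The decisive point is that $\phi$ is the image under $K$ of the function $\chi_{[a,b]}$, so $\norm{\phi}_{L_2}\le\norm{K}\,\norm{\chi_{[a,b]}}_{L_2}=\norm{K}\sqrt{b-a}$, giving $\norm{K_1}_{\mathcal{J}_1}\le\norm{K}$, the first summand on the right-hand side.

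For $K_2$ I would write $\int_s^y\partial_{y'}K\,dy'=\int_a^b\operatorname{sgn}(y-s)\,\chi_{[s\wedge y,\,s\vee y]}(y')\,\partial_{y'}K(x,y')\,dy'$ and exchange the $s$- and $y'$-integrations (Fubini, justified by absolute integrability), which presents $K_2$ as the operator product $(\partial_y K)\,M$, where $M$ is the integral operator on $L_2[a,b]$ with the sawtooth kernel $M(y',y)=\frac{y'-a}{b-a}$ for $y'<y$ and $-\frac{b-y'}{b-a}$ for $y'>y$. Submultiplicativity then gives $\norm{K_2}_{\mathcal{J}_1}\le\norm{\partial_y K}_{\mathcal{J}_2}\norm{M}_{\mathcal{J}_2}$, and a direct evaluation of $\iint|M(y',y)|^2\,dy'\,dy$ yields $\norm{M}_{\mathcal{J}_2}=(b-a)/\sqrt6$, a fortiori $\le(b-a)/\sqrt2$. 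Adding the two estimates gives $\norm{K}_{\mathcal{J}_1}\le\norm{K}+\tfrac{b-a}{\sqrt2}\norm{\partial_y K}_{\mathcal{J}_2}$, as claimed.

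I expect the main obstacle to be exactly the control of the constant-in-$y$ (DC) component by the \emph{operator} norm $\norm{K}$. Had I used a single endpoint, $K(x,y)=K(x,a)+\int_a^y\partial_{y'}K(x,y')\,dy'$, the residual rank-one term would carry the boundary slice $K(\cdot,a)$, whose $L_2$ norm is not bounded by $\norm{K}$ (a spike of $K$ in $y$ near $a$ defeats any such estimate while leaving the endpoint value large). Averaging in $s$ is what replaces the uncontrolled boundary slice by $K\chi_{[a,b]}$, which is manifestly bounded by $\norm{K}$; this is the crux of the argument. The remaining points --- the Fubini exchange and the membership $M\in\mathcal{J}_2$ --- are routine once $\partial_y K\in\mathcal{J}_2$ is assumed.
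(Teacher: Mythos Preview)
Your proof is correct and is essentially the same as the paper's. The paper phrases the decomposition as $K=KP+KQ$ with $P$ the rank-one projector onto $\chi_{[a,b]}$ and $Q=\chi_{[a,b]}-P$; your $K_1$ is exactly $KP$, and your operator $M$ is precisely $-VQ$, where $V$ is the Volterra operator $Vf(y)=\int_a^y f$ that the paper introduces via integration by parts. The only difference is cosmetic: the paper bounds $\norm{VQ}_{\mathcal{J}_2}\le\norm{V}_{\mathcal{J}_2}=(b-a)/\sqrt{2}$, whereas you compute $\norm{M}_{\mathcal{J}_2}=\norm{VQ}_{\mathcal{J}_2}=(b-a)/\sqrt{6}$ directly, obtaining a slightly sharper constant before relaxing it to match the stated inequality.
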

\begin{proof}
	Let $Pf(x) = \frac{1}{b-a}\chi_{[a, b]}(x)\langle 
	\chi_{[a, b]}, f\rangle_{L_2}$ be a one-dimensional
	 projector onto $\chi_{[a, b]}$ and $Q=\chi_{[a, b]}-P$. 
	 Decompose the operator $K = KP + KQ$. The operator $KP$ 
	 is again a one-dimensional projector with the trace norm of at most
	\[
		\norm{KP}_{\mathcal{J}_1}\le \norm{K}\norm{P}_{\mathcal{J}_1} = \norm{K}.
	\]
	
	Introduce the Volterra operator $V$ on $L_2([a, b])$ by the 
	formula $Vf(x) = \int_a^x f(t)dt$. The Volterra operator is a 
	Hilbert-Schmidt operator with the kernel $V(x, y) = 
	\chi_{[a, x]}(y)$ and its Hilbert-Schmidt norm is equal to
	\[
		\norm{V}_{\mathcal{J}_2}^2 = \int_a^bdx\int_a^xdy = \frac{(b-a)^2}{2}.
	\]
	
	We can now express the remaining $KQ$ via integration 
	by parts as follows
	\[
		\int_a^b K(x, y)Qf(y)dy = \int_a^b K(x, y) d(VQf(y)) = 
		K(x, y)VQf(y)\bigg\rvert_a^b - \partial_yKVQf,
	\]
	where 
	\[
		VQf(a) = 0,\qquad VQf(b) = \langle \chi_{[a, b]}, 
		Qf\rangle_{L_2} = 0
	\]
	by the definition of $V$ and $Q$. Inequality 
	$\norm{\partial_yKVQ}_{\mathcal{J}_1}\le 
	\norm{\partial_yK}_{\mathcal{J}_2}\norm{V}_{\mathcal{J}_2}$ 
	finishes the proof. 
\end{proof}
\begin{corrolary}\label{4:trunc_corr}
	There exists a constant $C$ such that for any kernel 
	$K$ of an integral operator on $L_2(\R_+)$ with the 
	following estimate for some constant $A$
	\[
		\abs{K(x, y)} \le \frac{A}{(x+y)^2},\quad 
		\abs{\partial_y K(x, y)}\le \frac{A}{(x+y)^2}, \quad x, y>0
	\]
	we have for any $R>0$
	\begin{equation}
		\norm{\chi_{[R, \infty)}K\chi_{[R, \infty)}}_{\mathcal{J}_1} 
		\le AC\left(\frac{1}{R}+ \frac{1}{\sqrt{R}}\right).
	\end{equation}
\end{corrolary}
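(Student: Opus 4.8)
The plan is to reduce to Proposition \ref{4:trunc_trace}, which already turns a pointwise bound on $\partial_y K$ into a trace-norm bound on a \emph{finite} interval, and then to sum such estimates over a partition of $[R,\infty)$. The crucial observation is that the factor $(b-a)$ in \eqref{4:trunc_trace_eq} forces the use of intervals of \emph{bounded} length. A dyadic partition $[2^kR,2^{k+1}R)$ would make the length grow like $2^kR$, and since $\partial_y K$ is only assumed to decay like $(x+y)^{-2}$, no better than $K$ itself, each dyadic block would contribute a quantity of constant order and the series would diverge. I therefore partition in the $y$-variable alone into unit intervals $I_m=[R+m,R+m+1]$, $m\ge0$, keeping $x$ over all of $[R,\infty)$, and set $K_m=\chi_{[R,\infty)}K\chi_{I_m}$.

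By subadditivity of the trace norm, $\|\chi_{[R,\infty)}K\chi_{[R,\infty)}\|_{\mathcal J_1}\le\sum_{m\ge0}\|K_m\|_{\mathcal J_1}$, and to each summand I apply Proposition \ref{4:trunc_trace} with $[a,b]=I_m$, so that $b-a=1$ and $\|K_m\|_{\mathcal J_1}\le\|K_m\|+\tfrac1{\sqrt2}\|\partial_yK_m\|_{\mathcal J_2}$; the hypotheses (absolute continuity in $y$, $K_m=K_m\chi_{I_m}$, and $\partial_yK_m\in\mathcal J_2$) follow directly from the assumptions on $K$. I would bound the operator norm simply by the Hilbert--Schmidt norm, so that both terms are controlled by the single integral
\[
\|K_m\|_{\mathcal J_2}^2,\ \|\partial_yK_m\|_{\mathcal J_2}^2\ \le\ A^2\int_R^\infty\!\!\int_{I_m}\frac{dy\,dx}{(x+y)^4}.
\]

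Carrying out the $x$-integration first gives $\int_R^\infty (x+y)^{-4}dx=\tfrac13(R+y)^{-3}$, and the subsequent $y$-integration over $I_m$ yields the telescoping difference $\tfrac16\big[(2R+m)^{-2}-(2R+m+1)^{-2}\big]$. The point I must handle carefully is to \emph{not} replace this difference by the crude bound given by the maximum of the integrand times the length $1$: that would produce $(2R+m)^{-3}$, whose $m=0$ term behaves like $R^{-3/2}$ and would destroy the estimate for small $R$. Retaining the difference gives $\|K_m\|_{\mathcal J_2},\ \|\partial_yK_m\|_{\mathcal J_2}\le C A\,(2R+m)^{-1}(2R+m+1)^{-1/2}$, so that the $m=0$ block is of the correct order $A/R$.

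Finally I sum over $m$. Splitting off $m=0$ gives $(2R)^{-1}(2R+1)^{-1/2}\le(2R)^{-1}$, the source of the $1/R$ term (dominant for small $R$); for $m\ge1$ I bound $(2R+m+1)^{-1/2}\le(2R+m)^{-1/2}$ and compare the series $\sum_{m\ge1}(2R+m)^{-3/2}$ with $\int_0^\infty(2R+s)^{-3/2}ds=2(2R)^{-1/2}$, the source of the $1/\sqrt R$ term (dominant for large $R$). Assembling the two pieces yields $\|\chi_{[R,\infty)}K\chi_{[R,\infty)}\|_{\mathcal J_1}\le AC(\tfrac1R+\tfrac1{\sqrt R})$ for all $R>0$, as claimed. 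I expect the main obstacle to be exactly the bookkeeping that extracts \emph{both} powers of $R$ from one partition: choosing unit rather than dyadic intervals is what makes the derivative series converge, while evaluating the block integral through its telescoping difference rather than a crude upper bound is what keeps the $m=0$ block at the correct order $A/R$ for small $R$.
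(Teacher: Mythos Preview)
Your proposal is correct and follows essentially the same approach as the paper: unit-interval partition in $y$, Proposition~\ref{4:trunc_trace} on each block with the operator norm bounded by the Hilbert--Schmidt norm, the telescoping evaluation of the block integral to get $(2R+m)^{-1}(2R+m+1)^{-1/2}$, and the split into the $m=0$ term (yielding $1/R$) plus an integral comparison for $m\ge1$ (yielding $1/\sqrt{R}$). Your extra commentary on why dyadic intervals fail and why the crude bound on the $m=0$ block would spoil the small-$R$ estimate is correct and makes explicit what the paper leaves implicit.
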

\begin{proof}
	By Proposition \ref{4:trunc_trace} we have that
	\begin{multline*}
		\norm{\chi_{[R, \infty)}K\chi_{[R, \infty)}}_{\mathcal{J}_1} 
		\le \sum_{l=0}^\infty \norm{\chi_{[R, \infty)}K
			\chi_{[R + l,R + l+1]}}_{\mathcal{J}_1} \le\\
		\le \sum_{l=0}^\infty(\norm{\chi_{[R, \infty)} K 
			\chi_{[R+l, R+l+1]})}_{\mathcal{J}_2} + 
			\norm{\chi_{[R, \infty)} \partial_yK 
				\chi_{[R+l, R+l+1]})}_{\mathcal{J}_2}),
	\end{multline*}
	where for each summand we have by the assertions
	\begin{multline*}
		\norm{\chi_{[R, \infty)} K \chi_{[R+l, 
				R+l+1]})}_{\mathcal{J}_2}^2 + 
			\norm{\chi_{[R, \infty)} \partial_yK 
				\chi_{[R+l, R+l+1]})}_{\mathcal{J}_2}^2 \le\\
		\le 2A^2\int_R^\infty dx \int_{R+l}^{R+l+1} dy 
		\frac{1}{(x+y)^4} = 3 A^2
		\frac{4R + 2l + 1}{(2R+l)^2(2R+l+1)^2} 
		\le \frac{6A^2}{(2R+l)^2(2R+l+1)}.
	\end{multline*}
	Therefore the following holds for the trace norm
	\[
		\norm{\chi_{[R, \infty)}K\chi_{[R, \infty)}}_{\mathcal{J}_1} 
		\le \frac{\sqrt{3}A}{R} + 2\sqrt{3}A\sum_{l=1}^\infty 
		\frac{1}{(2R+l)^{3/2}}.
	\]
	Lastly, we have that
	\[
		\sum_{l=1}^\infty \frac{1}{(2R+l)^{3/2}} 
		\le \int_{\R_+}\frac{1}{(2R+x)^{3/2}}dx = \frac{1}{\sqrt{2R}},
	\]
	which finishes the proof.
\end{proof}

Let us recall several properties of the Bessel functions. 
Denote $\mathfrak{J}(x) = \sqrt{x}J_\nu(x)$ and 
$\mathfrak{D}(x) = \mathfrak{J}_\nu(x)-\sqrt{2/\pi}\cos(x-\phi_\nu)$, 
where $\phi_\nu = \frac{\pi}{4} + \frac{\pi}{2}\nu$. We have the 
following asymptotics for the Bessel function and its 
derivative as $x$ approaches infinity.
\begin{equation}\label{4:bess_asymptotic}
	\mathfrak{D}(x) = -\sqrt{\frac{2}{\pi}}
	\sin(x-\phi_\nu)\frac{\nu^2-\frac{1}{4}}{x} + O(x^{-2})
\end{equation}
\begin{equation}\label{4:bess_asymptotic_der}
	\mathfrak{D}'(x) = A_\nu\frac{\cos(x-\phi_\nu)}{x} + O(x^{-2}),
\end{equation}
where $A_\nu$ is some constant. These imply uniform 
on $\R_+$ estimates for $\nu> -1$ and some constant $C_\nu$
\begin{equation}\label{4:bessel_uniform}
	\abs{\mathfrak{D}(x)} \le \frac{C_\nu}{\sqrt{x}(1+\sqrt{x})},
\end{equation}
\begin{equation}\label{4:bessel_derivative}
	\abs{\mathfrak{D}'(x) - A_\nu\frac{\cos(x-\phi_\nu)}{x}} 
	\le \frac{C_\nu}{x^{3/2}(1+\sqrt{x})}.
\end{equation}
Recall the following improper integral.
\begin{proposition}[{\cite[Lemma 2.6]{Basor_2003}}]
	We have that
	\begin{equation}\label{4:bessel_formula}
		\int_0^\infty \left( \mathfrak{J}(xt)\mathfrak{J}(yt) 
		- \frac{2}{\pi}\cos(xt-\phi_\nu)\cos(yt-\phi_\nu) 
		\right)dt = -\frac{\sin(2\phi_\nu)}{\pi(x+y)}
	\end{equation}
\end{proposition}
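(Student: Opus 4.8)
The plan is to exploit the exact cancellation between the non-decaying parts of the two terms in the integrand and then to evaluate the remaining convergent integral by Abel regularization. Writing $c(u) = \sqrt{2/\pi}\cos(u-\phi_\nu) = \mathfrak{J}(u) - \mathfrak{D}(u)$, we have $c(xt)c(yt) = \tfrac{2}{\pi}\cos(xt-\phi_\nu)\cos(yt-\phi_\nu)$, so the integrand equals
\[
g(t) := c(xt)\mathfrak{D}(yt) + \mathfrak{D}(xt)c(yt) + \mathfrak{D}(xt)\mathfrak{D}(yt).
\]
First I would verify that $\int_0^\infty g(t)\,dt$ converges as an improper integral. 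Near $t=0$ the behaviour $\mathfrak{J}(u)=O(u^{\nu+1/2})$ gives $g(t)=O(t^{2\nu+1})$, integrable for $\nu>-1$, while $c$ and $\mathfrak{D}$ stay bounded. For large $t$ the bound \eqref{4:bessel_uniform} gives $\mathfrak{D}(xt)\mathfrak{D}(yt)=O(t^{-2})$, which is absolutely integrable, while \eqref{4:bess_asymptotic} turns $c(xt)\mathfrak{D}(yt)$ into a constant multiple of $t^{-1}\cos(xt-\phi_\nu)\sin(yt-\phi_\nu)$ plus an $O(t^{-2})$ remainder; a product-to-sum identity reduces the leading part to oscillatory integrals $\int^\infty t^{-1}\sin((x\pm y)t+\mathrm{const})\,dt$, which converge by Dirichlet's test since the frequency $x+y$ is always nonzero and the $x-y$ term either oscillates or vanishes identically.

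Next I would pass to the regularized integral $I_\epsilon := \int_0^\infty g(t)e^{-\epsilon t}\,dt$. Because $\int_0^T g$ converges and stays bounded, the continuous Abelian theorem (integrate by parts in $t$) gives $\lim_{\epsilon\to 0^+} I_\epsilon = \int_0^\infty g\,dt$. For fixed $\epsilon>0$ every piece is now absolutely integrable, so I may split it back, using $\mathfrak{J}(u)=\sqrt{u}J_\nu(u)$, as
\[
I_\epsilon = \sqrt{xy}\int_0^\infty t\,J_\nu(xt)J_\nu(yt)e^{-\epsilon t}\,dt - \frac{2}{\pi}\int_0^\infty \cos(xt-\phi_\nu)\cos(yt-\phi_\nu)e^{-\epsilon t}\,dt.
\]

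It then remains to evaluate the two pieces as $\epsilon\to0^+$, which I treat first for $x\ne y$. The cosine piece is elementary: product-to-sum together with $\int_0^\infty\cos(at)e^{-\epsilon t}\,dt=\epsilon/(\epsilon^2+a^2)$ and $\int_0^\infty\sin(at)e^{-\epsilon t}\,dt=a/(\epsilon^2+a^2)$ yields
\[
\frac{1}{\pi}\left(\frac{\epsilon}{\epsilon^2+(x-y)^2} + \cos(2\phi_\nu)\frac{\epsilon}{\epsilon^2+(x+y)^2} + \sin(2\phi_\nu)\frac{x+y}{\epsilon^2+(x+y)^2}\right),
\]
which for $x\ne y$ tends to $\sin(2\phi_\nu)/(\pi(x+y))$. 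For the Bessel piece I would invoke the classical Laplace-transform identity $\int_0^\infty e^{-pt}J_\nu(at)J_\nu(bt)\,dt = (\pi\sqrt{ab})^{-1}Q_{\nu-1/2}\big(\tfrac{a^2+b^2+p^2}{2ab}\big)$ (valid for $\nu>-1/2$, extended to $\nu>-1$ by analyticity in $\nu$); differentiating in $p$ and setting $a=x$, $b=y$, $p=\epsilon$ gives $-\tfrac{\epsilon}{\pi xy}Q'_{\nu-1/2}(z_\epsilon)$ with $z_\epsilon=\tfrac{x^2+y^2+\epsilon^2}{2xy}$. Since $z_0=\tfrac{x^2+y^2}{2xy}>1$ for $x\ne y$ and $Q'_{\nu-1/2}$ is finite away from its branch point at $1$, this piece tends to $0$. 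Subtracting gives $\int_0^\infty g\,dt = -\sin(2\phi_\nu)/(\pi(x+y))$, as claimed.

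The diagonal $x=y$ is the only delicate point: there the two regularized pieces each blow up like $1/(\pi\epsilon)$ — the cosine piece through its nascent-delta term $\epsilon/(\epsilon^2+(x-y)^2)$, the Bessel piece through the logarithmic branch point of $Q'_{\nu-1/2}$ as $z_\epsilon\to1^+$ — yet these divergences cancel so that $I_\epsilon$ still converges, and one may alternatively extend the identity to $x=y$ by continuity of $\int_0^\infty g\,dt$ on $\{x,y>0\}$. I expect precisely this cancellation, equivalently the evaluation of the Bessel integral via the Legendre function $Q_{\nu-1/2}$, to be the main obstacle, since it is the one step requiring genuine Bessel-function input beyond the supplied asymptotics \eqref{4:bess_asymptotic}--\eqref{4:bessel_derivative}. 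Conceptually it reflects the distributional closure relation $\sqrt{xy}\int_0^\infty tJ_\nu(xt)J_\nu(yt)\,dt=\delta(x-y)$, whose singular part is exactly matched, and cancelled, by the $\delta(x-y)$ arising from $\tfrac{1}{\pi}\int_0^\infty\cos((x-y)t)\,dt$, leaving only the regular term $-\sin(2\phi_\nu)/(\pi(x+y))$.
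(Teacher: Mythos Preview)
The paper does not actually prove this proposition: it is quoted verbatim from \cite[Lemma~2.6]{Basor_2003} and used as a black box, so there is no in-paper argument to compare against. Your Abel-regularization approach, reducing the Bessel piece to the Macdonald--Legendre formula $\int_0^\infty e^{-pt}J_\nu(at)J_\nu(bt)\,dt=(\pi\sqrt{ab})^{-1}Q_{\nu-1/2}\bigl(\tfrac{a^2+b^2+p^2}{2ab}\bigr)$ and evaluating the cosine piece explicitly, is a sound and self-contained route to the identity; the cancellation of the $\epsilon^{-1}$ singularities on the diagonal is exactly the distributional closure relation you describe, and handling $x=y$ by continuity is legitimate once the off-diagonal case is settled.

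One small point to clean up: your near-zero analysis is garbled. The integrand $g(t)=\mathfrak{J}(xt)\mathfrak{J}(yt)-c(xt)c(yt)$ is \emph{not} $O(t^{2\nu+1})$ as written, since the cosine part contributes a nonzero constant $\tfrac{2}{\pi}\cos^2\phi_\nu$ at $t=0$; and for $-1<\nu<-1/2$ the function $\mathfrak{D}(u)=\mathfrak{J}(u)-c(u)$ is \emph{not} bounded near $0$ because $\mathfrak{J}(u)\sim u^{\nu+1/2}\to\infty$. The correct statement is that $g(t)=O(t^{2\nu+1})+O(1)$, which is locally integrable precisely for $\nu>-1$; this is all you need. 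Similarly, the Macdonald formula as stated requires $\nu>-1/2$ for convergence of the integral itself, so your analytic-continuation remark should be applied to the \emph{differentiated} identity $\int_0^\infty t\,e^{-pt}J_\nu J_\nu\,dt$, whose integrand $O(t^{2\nu+1})$ is integrable down to $\nu>-1$.
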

Lastly, we establish certain bounds by $\normB{\cdot}$.
\begin{lemma}\label{4:B_space_prop}
	There exists a constant $C$ such that 
	for any $a\in L_1(\R_+)\cap L_\infty(\R_+)$ 
	satisfying $\normB{a}<\infty$ we have
	
	1. $\norm{a''}_{L_1} \le C(\normH{1}{a} + 
	\normH{2}{a} + \normH{2}{ta(t)})$
	
	2. $\norm{ta'''(t)}_{L_1} \le C(\normH{3}{a} + 
	\normH{1}{a} + \normH{2}{ta(t)} + \normH{3}{t^2a(t)})$
	
	3. $\lim_{t\to+\infty}a(t) = 0$, $\lim_{t\to+\infty}a'(t)=0$, 
	$\lim_{t\to+\infty}ta''(t)=0$.
	
	4. $\abs{a'(0)} \le \normH{1}{a} + \normH{2}{a}$.
	
	In particular, these estimates are bounded by $C\normB{a}$.
\end{lemma}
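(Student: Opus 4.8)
The plan is to reduce all four assertions to bounds on weighted $L_2$-norms of derivatives of $a$, using three elementary devices: the identification $\normH{p}{a} = (2\pi)^{-1/2}\norm{a^{(p)}}_{L_2(\R_+)}$ recorded above; the Cauchy--Schwarz inequality against the weight $(1+t)^{-1}$, which lies in $L_2(\R_+)$ with $\norm{(1+t)^{-1}}_{L_2(\R_+)} = 1$, to pass from $L_1$ to $L_2$; and the Leibniz rule together with the interpolation inequality $\norm{t\phi}_{L_2}^2 \le \norm{\phi}_{L_2}\norm{t^2\phi}_{L_2}$ (itself a Cauchy--Schwarz estimate), to trade a weight $t^k a^{(j)}$ for genuine derivatives of $t^k a$. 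I will also use the Fourier-side interpolation $\normH{2}{a}^2 \le \normH{1}{a}\normH{3}{a}$, obtained from $\int \lambda^4\abs{\hat a}^2 = \int(\lambda\abs{\hat a})(\lambda^3\abs{\hat a}) \le \normH{1}{a}\normH{3}{a}$ by Cauchy--Schwarz.

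For part 1, I would write $\norm{a''}_{L_1} \le \norm{(1+t)^{-1}}_{L_2}\norm{(1+t)a''}_{L_2} \le \norm{a''}_{L_2} + \norm{t a''}_{L_2}$. The first term is $\sqrt{2\pi}\,\normH{2}{a}$. For the second, the Leibniz identity $(ta)'' = ta'' + 2a'$ gives $ta'' = (ta)'' - 2a'$, so $\norm{ta''}_{L_2} \le \norm{(ta)''}_{L_2} + 2\norm{a'}_{L_2}$, controlled by $\normH{2}{ta} + \normH{1}{a}$. This yields part 1. Part 2 follows the same scheme with one more factor of $t$: from $\norm{ta'''}_{L_1} \le \norm{(1+t)^{-1}}_{L_2}\norm{(t+t^2)a'''}_{L_2}$ it suffices to bound $\norm{ta'''}_{L_2}$ and $\norm{t^2 a'''}_{L_2}$. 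Expanding $(t^2a)''' = t^2 a''' + 6t a'' + 6a'$ isolates $t^2 a''' = (t^2 a)''' - 6t a'' - 6a'$, whence $\norm{t^2 a'''}_{L_2}$ is bounded by $\normH{3}{t^2 a} + \normH{1}{a} + \norm{ta''}_{L_2}$, and the last term was already estimated in part 1. The intermediate norm $\norm{ta'''}_{L_2}$ is then obtained by interpolation, $\norm{ta'''}_{L_2} \le \tfrac12(\norm{a'''}_{L_2} + \norm{t^2a'''}_{L_2})$, controlled by $\normH{3}{a}$ together with the bound just proved. Collecting terms gives part 2.

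For part 3, I would use the one-dimensional fact that a function $g$ with $g, g' \in L_2(\R_+)$ tends to $0$ at infinity: indeed $g(t)^2 = -\int_t^\infty (g^2)' = -2\int_t^\infty g g'$, and the right-hand side tends to $0$ as $t\to\infty$ by Cauchy--Schwarz since $g, g' \in L_2$. Applying this to $g = a$ (using $a, a' \in L_2$), to $g = a'$ (using $a', a'' \in L_2$, the latter via $\normH{2}{a} \le \tfrac12(\normH{1}{a}+\normH{3}{a})$), and to $g = t a''$ (using that both $ta''$ and $(ta'')' = a'' + t a'''$ lie in $L_2$, which follows from the bounds established in parts 1 and 2) yields the three limits. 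For part 4, since $a'(t) \to 0$ as $t\to\infty$ by part 3, the fundamental theorem of calculus gives $a'(0)^2 = -\int_0^\infty (a'^2)' = -2\int_0^\infty a' a''$, hence $\abs{a'(0)} \le \sqrt{2}\,\norm{a'}_{L_2}^{1/2}\norm{a''}_{L_2}^{1/2} \le C(\normH{1}{a} + \normH{2}{a})$ by the arithmetic--geometric mean inequality.

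The claimed bound by $C\normB{a}$ for each part then follows from the Fourier-side interpolation $\normH{2}{a} \le \tfrac12(\normH{1}{a} + \normH{3}{a})$, which dominates every seminorm on the right-hand sides by $\normB{a}$. I expect the main obstacle to be part 2: isolating $t^2 a'''$ from $(t^2 a)'''$ and controlling the intermediate weight $t a'''$ requires the careful Leibniz bookkeeping and the interpolation step above, and one must ensure the identifications $\normH{p}{t^k a} \asymp \norm{(t^k a)^{(p)}}_{L_2}$ are legitimate and that no boundary contributions are lost --- the vanishing of the relevant boundary terms is precisely what part 3 supplies.
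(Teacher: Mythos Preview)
Your argument is correct. Parts 1 and 2 follow essentially the same route as the paper --- Cauchy--Schwarz against a decaying weight, then the Leibniz identity $ta'' = (ta)'' - 2a'$ (resp.\ $t^2 a''' = (t^2 a)''' - 6ta'' - 6a'$) to reduce to the given seminorms; the only cosmetic difference is that the paper splits the $L_1$-integral at the point $1$ while you multiply and divide by $(1+t)$.

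Parts 3 and 4 take a genuinely different route. You use a single Sobolev-embedding device: if $g, g' \in L_2(\R_+)$ then $g(t)\to 0$ as $t\to\infty$ and $g(0)^2 = -2\int_0^\infty g g'$, applied successively to $g = a, a', ta''$ for part 3 and to $g = a'$ for part 4. The paper instead invokes the Riemann--Lebesgue lemma (from $\hat a,\ \widehat{a'}\in L_1$) for the first two limits in part 3, an ``$L_1$ derivative $\Rightarrow$ limit exists, $L_2$ $\Rightarrow$ limit is zero'' argument for $ta''$, and for part 4 a direct Fourier-side bound $\abs{a'(0)} \le \pi^{-1}\int_0^\infty \lambda\abs{\hat a(\lambda)}\,d\lambda$ split at $\lambda = 1$. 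Your approach is more self-contained --- everything feeds off the $L_2$ bounds already established in parts 1--2 --- whereas the paper's Fourier-side argument in part 4 yields the inequality with constant $1/\pi < 1$, matching the statement as written without an extra constant; your route gives a constant $\sqrt{\pi}$, which is harmless for every later use of the lemma but does not literally recover the unnumbered inequality in item 4.
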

\begin{proof}
	1. Using the Cauchy-Bunyakovsky-Schwarz inequality 
	write for the $L_1$ norm
	\[
		\norm{a''}_{L_1} = \int_0^1\abs{a''(t)}dt + 
		\int_1^\infty \abs{a''(t)}dt \le \normH{2}{a} + 
		\norm{ta''(t)}_{L_2}.
	\]
	Since $ta''(t) = (at)'' - 2a'$ the second term 
	is estimated by $2\normH{1}{a} + \normH{2}{ta(t)}$.
	
	2. The proof is completely parallel 
	to the previous one.
	
	3. The statement for $a$ follows 
	from $\hat{a}\in L_1(\R)$ and the Riemann-Lebesgue lemma.
	
	Since $a'\in H_1(\R_+)$ we have that $\widehat{a'}$ is 
	absolutely integrable and the statement for $a'$ 
	again follows from the Riemann-Lebesgue lemma.
	
	By the first and second statements $(ta''(t))' = a''(t) + ta'''(t)$ 
	is absolutely integrable, so $ta''(t)$ tends to a finite limit 
	as $t$ approaches infinity. Since $(ta(t))''$ and $a'(t)$ are 
	square integrable, so is $ta''(t)$, which yields that the limit 
	is zero again.
	
	4. Expression of $a'(0)$ via the cosine transform 
	yields the following estimate
	\[
		\abs{b'(0)} \le \frac{1}{\pi}\int_0^\infty 
		\lambda\abs{\hat{b}(\lambda)}d\lambda,
	\]
	where by the Cauchy-Bunyakovsky-Schwarz inequality 
	and the Parseval theorem we get
	\[
		\int_0^1\lambda\abs{\hat{b}(\lambda)}d\lambda 
		\le \normH{1}{b}\quad\quad \int_1^\infty 
		\frac{\lambda^2}{\lambda}\abs{\hat{b}(\lambda)}
		d\lambda \le \normH{2}{b}.
	\]
\end{proof}
We devote the rest of this section to the proof 
of Lemma \ref{2:bess_diff_est}.
\begin{proof}[Proof of Lemma \ref{2:bess_diff_est}]
	Since $B_1 - W_1 = 0$, it is enough to proof the statement 
	for $b\in L_1(\R_+)\cap L_\infty(\R_+)$, satisfying $\normB{b}<\infty$. 
	Recall the formula for the kernel of the difference 
	$\mathcal{R}_b(x, y) = B_b(x, y) - W_b(x, y)$:
	\[
		\mathcal{R}_b(x, y) = \int_0^\infty \left( 
		\mathfrak{J}(xt)\mathfrak{J}(yt) - 
		\frac{1}{\pi}\cos( (x-y)t) \right)b(t)dt.
	\]
	Let us outline the plan of the proof. One can observe 
	from asymptotics \eqref{4:bess_asymptotic} and 
	\eqref{4:bess_asymptotic_der} that the integral above 
	contains difference of two asymptotically similar functions. 
	Therefore, it is reasonable to substitute $\mathfrak{J}(x) = 
	\sqrt{x}J_\nu(x) = \mathfrak{D}(x) + \sqrt{2/\pi}\cos(x-\phi_\nu)$ 
	into the expression for $\mathcal{R}_b(x, y)$. This substituion 
	and several integrations by parts represent the kernel as a 
	sum of different kernels, to which Corollary \ref{4:trunc_corr} 
	and Proposition \ref{4:est_separable} may be applied.
	
	To be precise, let us first introduce the notation. 
	We will do a sequence of decompositions of our 
	kernel, which we denote as follows
	\begin{equation}\label{4_eq:dec_1}
		\mathcal{R}_b(x, y) = 
		\mathcal{R}_1(x, y) - \mathcal{R}_2(x, y),
	\end{equation}
	\begin{equation}\label{4_eq:dec_2}
		\mathcal{R}_2(x, y) = 
		S(x, y) + T(x, y) + T(y, x),
	\end{equation}
	\begin{equation}\label{4_eq:dec_3}
		T(x, y) = T_0(x, y) +
		 T_1(x, y) + Z(x, y).
	\end{equation}
	Explicit formulae in the notation above will be given 
	below (see formulae \eqref{4_eq:dec_1_done}, 
	\eqref{4_eq:dec_2_done}, \eqref{4_eq:dec_3_done}). 
	We prove the estimate for trace norm of at most 
	constant times $\normB{b}/\sqrt{R}$ for each of 
	the introduced kernels separately. In particular, 
	the estimate of trace norm for $\chi_{[R, \infty)}
	\mathcal{R}_1\chi_{[R, \infty)}$ will follow from 
	Corollary \ref{4:trunc_corr}. Estimates on trace 
	norms of operators with the following kernels 
	\begin{align*}
		&\chi_{[R, \infty)^2}(x, y)S(x, y),\quad
		\chi_{[R, \infty)^2}(x, y)T_0(x, y),\\
		&\chi_{[R, \infty)^2}(x, y)T_1(x, y),\quad
		\chi_{[R, \infty)^2}(x, y)(Z(x, y) + Z(y, x))
	\end{align*}
	will follow from Proposition \ref{4:est_separable}.
	
	\textbf{Sequence of decompositions}
	
	\textbf{1. Decomposition \eqref{4_eq:dec_1}}
	
	First substitute the following formula into the kernel 
	$\mathcal{R}_b(x, y)$
	\[
		\frac{1}{\pi}\cos( (x-y)t) = \frac{2}{\pi}
		\cos(xt-\phi_\nu)\cos(yt-\phi_\nu) - 
		\frac{1}{\pi}\cos( (x+y)t-2\phi_\nu).
	\]
	By the third statement of Lemma \ref{4:B_space_prop} 
	the following integral may be integrated by parts 
	two times and, hence, expressed as follows
	\begin{multline*}
		\frac{1}{\pi}\int_0^\infty \cos( (x+y)t - 2\phi_\nu)b(t)dt =\\
		= \frac{\sin(2\phi_\nu)b(0)}{\pi(x+y)}
		- \frac{b'(0)\cos(2\phi_\nu)}{\pi(x+y)^2} - 
		\frac{1}{\pi(x+y)^2}\int_0^\infty \cos( (x+y)t - 
		2\phi_\nu)b''(t)dt.
	\end{multline*}
	Next we substitute expression \eqref{4:bessel_formula} 
	for $ \sin(2\phi_\nu)/(\pi(x+y))$ into the identity above.
	 These calculations prove decomposition \eqref{4_eq:dec_1} 
	 for the following $\mathcal{R}_1, \mathcal{R}_2$
	\begin{equation}\label{4_eq:dec_1_done}
		\begin{aligned}  
			&\mathcal{R}_1(x, y) = \frac{1}{\pi(x+y)^2}
			\left(\cos(2\phi_\nu)b'(0) + \int_0^\infty
			\cos( (x+y)t-2\phi_\nu)b''(t)dt\right),\\
			&\mathcal{R}_2(x, y) = \int_0^\infty 
			\left( \mathfrak{J}(xt)\mathfrak{J}(yt) - 
			\frac{2}{\pi}\cos(xt-\phi_\nu)\cos(yt-\phi_\nu) 
			\right)b_0(t)dt,
		\end{aligned}
	\end{equation}
	where $b_0(x) = b(x) - b(0)$.
	
	\textbf{2. Decomposition \eqref{4_eq:dec_2}}
	
	It may be directly verified that
	\eqref{4_eq:dec_2} holds if we take $S$, $T$ to be
	\begin{equation}\label{4_eq:dec_2_done}
		\begin{aligned}
			&S(x, y) = \int_0^\infty \mathfrak{D}(xt)
			\mathfrak{D}(yt)b_0(t)dt,\\
			&T(x, y) = \int_0^\infty \mathfrak{D}(xt)
			\sqrt{\frac{2}{\pi}}\cos(yt-\phi_\nu)b_0(t)dt.
		\end{aligned}
	\end{equation}
	
	\textbf{3. Decomposition \eqref{4_eq:dec_3}}
	
	Integrate by parts the expression for $T(x, y)$
	\begin{multline*}
		\int_0^\infty \mathfrak{D}(xt)\sqrt{\frac{2}{\pi}}
		\frac{1}{y}b_0(t)d(\sin(yt-\phi_\nu)) = \mathfrak{D}(xt)
		\sqrt{\frac{2}{\pi}}\frac{1}{y}b_0(t)\sin(yt-\phi_\nu)
		\bigg\rvert_0^{\infty} - \\ - \int_0^\infty \mathfrak{D}(xt)
		\sqrt{\frac{2}{\pi}}\frac{1}{y}\sin(yt-\phi_\nu)b'(t)dt -
		 \int_0^\infty x\mathfrak{D}'(xt)\sqrt{\frac{2}{\pi}}
		 \frac{1}{y}\sin(yt-\phi_\nu)b_0(t)dt,
	\end{multline*}
	where the first term is zero by Lemma \ref{4:B_space_prop} 
	and estimate \eqref{4:bessel_uniform}. We next take the 
	following kernels the decomposition \eqref{4_eq:dec_3}
	\begin{equation}\label{4_eq:dec_3_done}
		\begin{aligned}
			&T_0(x, y) = - \int_0^\infty \mathfrak{D}(xt)
			\sqrt{\frac{2}{\pi}}\frac{1}{y}\sin(yt-\phi_\nu)b'(t)dt,\\
			&T_1(x, y) = - \int_0^\infty x\left(\mathfrak{D}'(xt) 
			- A_\nu\frac{\cos(xt-\phi_\nu)}{xt}\right)
			\sqrt{\frac{2}{\pi}}\frac{1}{y}\sin(yt-\phi_\nu)b_0(t)dt,\\
			&Z(x, y) = - \int_0^\infty x A_\nu\frac{\cos(xt-\phi_\nu)}{xt}
			b_0(t)\sqrt{\frac{2}{\pi}}\frac{1}{y}\sin(yt-\phi_\nu)dt.
		\end{aligned}
	\end{equation}
	
	\textbf{Trace norm estimates}
	
	Before diving into calculations we give several inequalities for $b$. 
	Firstly, the Cauchy-Bunyakovsky-Schwarz inequality implies the following
	\begin{equation}\label{4:volt_est}
		\abs{b_0(t)} = \abs*{\int_0^tb'(x)dx} \le \sqrt{t}\normH{1}{b}.
	\end{equation}
	The same argument for derivative gives
	\begin{equation}\label{4:volt_est_der}
		\abs{b'(t)} \le \sqrt{t}(\abs{b'(0)} + \normH{2}{b}).
	\end{equation}
	Further, using inequality \eqref{4:volt_est_der} we have
	\begin{equation}\label{4:volt_est_str}
		\abs{b_0(t)} = \abs*{\int_0^tb'(x)dx} \le 
		(\abs{b'(0)} + \normH{2}{b})t^{3/2}.
	\end{equation}
	Lastly, observe the identity
	\[
		\frac{d}{dt}\left (\frac{b_0(t)}{t}\right ) = 
		\frac{1}{t^2}\left(\int_0^tb'(x)dx - b_0(t) + 
		\int_0^txb''(x)dx\right) = \frac{1}{t^2}\int_0^txb''(x)dx.
	\]
	The Cauchy-Bunyakovsky-Schwarz inequality implies 
	$\abs*{\int_0^txb''(x)du} \le t^{3/2}\normH{2}{b}$. 
	This inequality with the expression above yield
	\begin{equation}\label{4:frac_est}
		\abs*{\frac{d}{dt}\left (\frac{b_0(t)}{t}\right )}
		\le \frac{\normH{2}{b}}{\sqrt{t}}.
	\end{equation}
	
	\textbf{1. Estimate for $\mathcal{R}_1$}
	
	We immediately have
	\[
		\abs{\mathcal{R}_1(x, y)} \le 
		\frac{1}{(x+y)^2}(\abs{b'(0)} + \norm{b''}_{L_1}).
	\]
	By Lemma \ref{4:B_space_prop} $b''$ is absolutely integrable, 
	so we have the following expression for the derivative
	\begin{multline*}
		\partial_y\mathcal{R}_1(x, y) = -\frac{2}{\pi(x+y)^3}
		\left(\cos(2\phi_\nu)b'(0) + \int_0^\infty\cos( (x+y)t-2\phi_\nu)
		b''(t)dt\right) -\\
		- \frac{1}{\pi(x+y)^2}\int_0^\infty\sin( (x+y)t-
		2\phi_\nu)tb''(t)dt.
	\end{multline*}
	We next integrate the second term by parts
	\begin{multline*}
		\frac{1}{(x+y)}\int_0^\infty d(\cos( 
		(x+y)t-2\phi_\nu))tb''(t)dt = \cos((x+y)t - 2\phi_\nu)
		tb''(t)\bigg\rvert_0^{\infty} -\\
		-\frac{1}{(x+y)}\int_0^\infty \cos( (x+y)t-2\phi_\nu)
		(tb'''(t) + b''(t))dt,
	\end{multline*}
	where the limit in infinity is zero by Lemma \ref{4:B_space_prop}. 
	These calculations imply the following estimate for the derivative for $x, y\ge 1$
	\[
		\abs{\partial_y\mathcal{R}_1(x, y)} \le \frac{2}{(x+y)^2}
		(\abs{b'(0)} + \norm{b''}_{L_1} + \norm{tb'''(t)}_{L_1}).
	\]
	Finally, applying Corrollary \ref{4:trunc_corr} and Lemma 
	\ref{4:B_space_prop} we get the desired estimate for some 
	constant $C$ and $R\ge 1$
	\[
		\norm{\chi_{[R, \infty)}\mathcal{R}_1\chi_{[R, \infty)}}_{\mathcal{J}_1} 
		\le \frac{C\normB{b}}{\sqrt{R}}.
	\]
	
	\textbf{2. Estimate for $S$}
	
	Using estimate \eqref{4:bessel_uniform} we get
	\[
		\int_R^\infty \abs{\mathfrak{D}(xt)}^2 dx \le C_\nu^2\int_R^\infty 
		\frac{1}{xt(1+\sqrt{xt})^2}dx = \frac{2C_\nu^2}{t}\left(\ln
		\left(1+\frac{1}{\sqrt{tR}}\right)-\frac{1}{1+\sqrt{tR}}\right).
	\]
	Denote $G(x) = \ln(1+1/\sqrt{x})-1/(1+\sqrt{x})$. Using 
	Proposition \ref{4:est_separable} we have
	\[
		\norm{\chi_{[R, \infty)}S\chi_{[R, 
				\infty)}}_{\mathcal{J}_1} \le 2C_\nu^2
			\int_0^\infty \frac{\abs{b_0(t)}}{t}G(tR)dt.
	\]
	We next substitute inequality \eqref{4:volt_est} to obtain
	\[
		\norm{\chi_{[R, \infty)}S\chi_{[R, \infty)}}_{\mathcal{J}_1}
		 \le 2C_\nu^2\normH{1}{b}\int_0^\infty \frac{G(tR)}{\sqrt{t}}dt
		  = \frac{4C_\nu^2}{\sqrt{R}}\normH{1}{b}\le 
		  \frac{4C_\nu^2}{\sqrt{R}}\normB{b}.
	\]
	
	\textbf{3. Estimate for $T_0$}
	
	Again Proposition \ref{4:est_separable} together with estimate 
	\eqref{4:bessel_uniform} give
	\[
		\norm{\chi_{[R, \infty)}T_0\chi_{[R, \infty)}}_{\mathcal{J}_1} 
		\le \frac{C_\nu}{\sqrt{R}}\int_0^\infty 
		\frac{\abs{b'(t)}}{\sqrt{t}}\sqrt{G(tR)}dt.
	\]
	For the integral on $[0, 1]$ using inequality 
	\eqref{4:volt_est_der} write for some constant $C$
	\[
		\int_0^1\frac{\abs{b'(t)}}{\sqrt{t}}G^{1/2}(tR)dt \le 
		(\abs{b'(0)} + \normH{2}{b})\int_0^1\sqrt{G(tR)}dt \le 
		C(\abs{b'(0)} + \normH{2}{b}).
	\]
	And for the integral on $[1, \infty)$ use the 
	Cauchy-Bunyakovsky-Schwarz inequality
	\[
		\int_1^\infty\frac{\abs{b'(t)}}{\sqrt{t}}\sqrt{G(tR)}dt 
		\le \normH{1}{b}\sqrt{\int_1^\infty \frac{G(xR)}{x}dx} 
		\le \sqrt{2}\normH{1}{b}.
	\]
	These calculations yield the following estimate by 
	Lemma \ref{4:B_space_prop}
	\[
		\norm{\chi_{[R, \infty)}T_0\chi_{[R, \infty)}}_{\mathcal{J}_1} 
		\le \frac{2CC_\nu}{\sqrt{R}}(\abs{b'(0)} + \normH{1}{b} + 
		\normH{2}{b}) \le \frac{4CC_\nu}{\sqrt{R}}\normB{b}.
	\]
	
	\textbf{4. Estimate for $T_1$}
	
	Using estimate \eqref{4:bessel_derivative} we get
	\[
		\int_R^\infty x^2\left (\mathfrak{D}'(xt) - 
		A_\nu\frac{\cos(xt - \phi_\nu)}{xt}\right)^2
		dx \le \frac{C_\nu^2}{t^3}\int_{R}^\infty 
		\frac{1}{x(1+\sqrt{xt})^2}dx = 
		\frac{2C_\nu^2}{t^3}G(tR).
	\]
	Together with Proposition \ref{4:est_separable} this implies
	\[
		\norm{\chi_{[R, \infty)}T_1\chi_{[R, \infty)}}_{\mathcal{J}_1} 
		\le \frac{\sqrt{2}C_\nu}{\sqrt{R}}\int_0^\infty 
		\frac{\abs{b_0(t)}}{t^{3/2}}\sqrt{G(tR)}dt.
	\]
	For the integral on $[0, 1]$ use inequality \eqref{4:volt_est_str} 
	to obtain the following for some constant $C$
	\[
		\int_0^1 \frac{\abs{b_0(t)}}{t^{3/2}}\sqrt{G(tR)}dt 
		\le (\abs{b'(0)} + \normH{2}{b})\int_0^1\sqrt{G(tR)}dt 
		\le C(\abs{b'(0)} + \normH{2}{b}).
	\]
	For the integral on $[1, \infty)$ using inequality \eqref{4:volt_est}
	 we have for some constant $\tilde{C}$
	\[
		\int_1^\infty \frac{\abs{b_0(t)}}{t^{3/2}}\sqrt{G(tR)}dt 
		\le \normH{1}{b}\int_1^\infty\frac{\sqrt{G(tR)}}{t}dt 
		\le \tilde{C}\normH{1}{b}.
	\]
	Therefore, we conclude the following
	\[
		\norm{\chi_{[R, \infty)}T_1\chi_{[R, \infty)}}_{\mathcal{J}_1} 
		\le \frac{\sqrt{2}(C+\tilde{C})C_\nu}{\sqrt{R}}(\normH{1}{b} + 
		\normH{2}{b} + \abs{b'(0)}) \le 
		\frac{2\sqrt{2}(C+\tilde{C})C_\nu}{\sqrt{R}}\normB{b}.
	\]
	
	\textbf{5. Estimate for $Z(x, y) + Z(y, x)$}
	
	Denote the respective operator by $\tilde{Z}$. 
	Integrate its kernel by parts as follows
	\begin{multline*}
		\tilde{Z}(x, y) = Z(x, y) + Z(y, x) = A_\nu\sqrt{\frac{2}{\pi}}\int_0^\infty
		 \frac{b_0(t)}{xyt}\frac{d}{dt}(\sin(xt-\phi_\nu)\sin(yt-\phi_\nu))dt =\\
		= -A_\nu\sqrt{\frac{2}{\pi}}\frac{b'(0)}{xy}\sin^2(\phi_\nu) -
		 A_\nu\sqrt{\frac{2}{\pi}}\frac{1}{xy}\int_0^\infty\frac{d}{dt}
		 \left (\frac{b_0(t)}{t}\right)\sin(xt - \phi_\nu)
		 \sin(yt-\phi_\nu)dt.
	\end{multline*}
	The first term is a kernel of the form $h_1(x)\langle h_2(y), -\rangle_{L_2}$. 
	Trace norm of the respective operator is bounded by $\norm{h_1}_{L_2}\norm{h_2}_{L_2}$. 
	Therefore trace norm of the operator on $L_2[R, \infty)$, corresponding to the 
	first term, is estimated by $\abs{A_\nu b'(0)}/R \le \abs{A_\nu}\normB{b}/R$ 
	by the last statement of Lemma \ref{4:B_space_prop}. We employ Proposition 
	\ref{4:est_separable} for the operator with the kernel given by the
	 second term. This results in the following estimate
	\[
		\norm{\chi_{[R, \infty)}\tilde{Z}\chi_{[R, \infty)}}_{\mathcal{J}_1} 
		\le \frac{\abs{A_\nu}\normB{b}}{R} + \frac{\abs{A_\nu}}{R}\int_0^\infty
		\abs*{\frac{d}{dt}\left (\frac{b_0(t)}{t}\right)}dt.
	\]
	The integral on $[0, 1]$ is estimated via inequality \eqref{4:frac_est}
	\[
		\int_0^1\abs*{\frac{d}{dt}\left (\frac{b_0(t)}{t}\right)}dt 
		\le \normH{2}{b}.
	\]
	The integral on $[1, \infty)$ is estimated using inequality 
	\eqref{4:volt_est} and the Cauchy-Bunyakovsky-Schwarz inequality
	\[
		\int_1^\infty\abs*{\frac{d}{dt}\left (\frac{b_0(t)}{t}\right)}dt 
		\le \int_1^\infty\left (\frac{\abs{b'(t)}}{t} + 
		\frac{\normH{1}{b}}{t^{3/2}}\right )dt \le 3\normH{1}{b}.
	\]
	Therefore, we conclude the following estimate
	\[
		\norm{\chi_{[R, \infty)}\tilde{Z}\chi_{[R, \infty)}}_{\mathcal{J}_1} 
		\le \frac{5\abs{A_\nu}\normB{b}}{R}.
	\]
	This finishes the proof of Lemma \ref{2:bess_diff_est}.
\end{proof}

\section{Proof of Theorems \ref{1:BO_bessel} and \ref{1:KS_estimate}}
Recall the Jacobi-Dodgson identity.
\begin{proposition}[ 
	{\cite[Proposition~6.2.9]{simon2005orthogonal}}]\label{5:JD_form}
	Let $A$ be a determinant class invertible operator on a 
	separable Hilbert space $\mathcal{H}$. Let $P$ be an operator 
	of orthogonal projection and $Q=I-P$. Then the following relation holds
	\begin{equation}
		\det(PAP) = \det( A)\det(QA^{-1}Q).
	\end{equation}
\end{proposition}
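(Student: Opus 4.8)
The plan is to reduce the identity to the multiplicativity of the Fredholm determinant under a block (Schur) factorization of $A$ relative to the orthogonal decomposition $\mathcal{H} = P\mathcal{H}\oplus Q\mathcal{H}$. Writing $A$ in block form with corners $a = PAP$, $b = PAQ$, $c = QAP$, $d = QAQ$, I read $\det(PAP)$ as the determinant of the compression $a$ on $P\mathcal{H}$ and $\det(QA^{-1}Q)$ as that of the matching corner of $A^{-1}$. Since $A - I \in \mathcal{J}_1$, each of $a - P$, $b$, $c$, $d - Q$ is trace class, so every determinant that appears is well defined; this is the first thing I would record. One checks the scalar $2\times 2$ case to confirm the direction of the identity before proceeding.

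The core computation assumes the corner $a$ is invertible on $P\mathcal{H}$, which is the situation relevant to the application in \eqref{2:BO_second}. Then I would use the factorization
\[
A = \begin{pmatrix} I & 0\\ ca^{-1} & I\end{pmatrix}\begin{pmatrix} a & 0\\ 0 & s\end{pmatrix}\begin{pmatrix} I & a^{-1}b\\ 0 & I\end{pmatrix}, \qquad s := d - ca^{-1}b,
\]
into a lower triangular, a block diagonal, and an upper triangular factor. The off-diagonal blocks $ca^{-1}$ and $a^{-1}b$ are trace class, so each triangular factor has the form $I+N$ with $N\in\mathcal{J}_1$ and $N^2 = 0$; all its eigenvalues vanish, so its Fredholm determinant equals $1$. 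Hence $\det(A) = \det(a)\det(s)$, where $s - Q \in\mathcal{J}_1$ because $ca^{-1}b\in\mathcal{J}_1$. Inverting the same factorization block by block, the $Q$-corner of $A^{-1}$ is exactly $s^{-1}$, whence $\det(QA^{-1}Q) = \det(s)^{-1}$, and multiplying gives $\det(A)\det(QA^{-1}Q) = \det(a) = \det(PAP)$.

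The main obstacle is the degenerate case in which $a = PAP$ fails to be invertible: the factorization then breaks down and $\det(PAP)=\det(a)$ may vanish, and one must still show the right-hand side vanishes too. I would remove this obstruction by analytic continuation. Set $A_z = I + z(A-I)$, so that $A_z - I\in\mathcal{J}_1$ for all $z\in\C$ and $A_z$ is invertible off a discrete set. The left-hand side $z\mapsto\det(PA_zP)$ is entire, since $PA_zP - P = zP(A-I)P$ is trace class and the Fredholm determinant is entire in the trace-class perturbation, while the right-hand side $z\mapsto\det(A_z)\det(QA_z^{-1}Q)$ is meromorphic. On the nonempty open set where $PA_zP$ is invertible the computation of the previous paragraph applies and the two functions coincide; hence they agree everywhere by the identity theorem, and in particular at $z=1$. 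The remaining points are routine consequences of the ideal property of $\mathcal{J}_1$ and the continuity of the Fredholm determinant recalled in Section~4: the trace-class membership of the blocks and of $ca^{-1}$, $a^{-1}b$, $ca^{-1}b$, and the vanishing of the Fredholm determinants of the nilpotent triangular factors.
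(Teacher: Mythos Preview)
The paper does not supply its own proof of this proposition; it is quoted as a known result from Simon's book \cite[Proposition~6.2.9]{simon2005orthogonal}, and the paper only uses it (in the proof of Corollary~\ref{5:JD_ext}) as a black box. So there is nothing in the paper to compare against.

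Your argument via the Schur (LDU) block factorization is correct and is essentially the standard proof. The trace-class bookkeeping is right: $b=P(A-I)Q$ and $c=Q(A-I)P$ are in $\mathcal{J}_1$, so the off-diagonal blocks $ca^{-1}$, $a^{-1}b$ are trace class, the unipotent triangular factors have Fredholm determinant $1$, and $s-Q\in\mathcal{J}_1$. The identification $QA^{-1}Q|_{Q\mathcal{H}}=s^{-1}$ from inverting the factorization is exactly what is needed, and multiplicativity then gives $\det(A)=\det(a)\det(s)$ and $\det(QA^{-1}Q)=\det(s)^{-1}$.

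One small remark on the degenerate case: calling the right-hand side ``meromorphic'' in $z$ is a slight overstatement and, more to the point, is not needed. What you actually use is that both $z\mapsto\det(PA_zP)$ and $z\mapsto\det(A_z)\det(QA_z^{-1}Q)$ are holomorphic on the connected open set $V=\{z:\det(A_z)\neq 0\}$, that they agree on the nonempty open subset where additionally $PA_zP$ is invertible (a neighbourhood of $z=0$ suffices), and that $z=1\in V$ since $A$ is assumed invertible. That is enough for the identity theorem, and avoids having to analyse the behaviour of $\det(QA_z^{-1}Q)$ near the zeros of $\det(A_z)$.
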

We use the following variation of this identity.
\begin{corrolary}\label{5:JD_ext}
	Let $P_1, P_2$ be commuting orthogonal projectors in 
	a separable Hilbert space $\mathcal{H}$. 
	Let $Q_i=I-P_i$, $i=1, 2$. The following relation holds
	\begin{equation}\label{5:JD_ext_eq}
		\frac{\det(P_1AP_1)}{\det(Q_1A^{-1}Q_1)}=
		\frac{\det(P_2AP_2)}{\det(Q_2A^{-1}Q_2)}
	\end{equation}
	if $A$ is invertible, $A-I$ is Hilbert-Schmidt and 
	all present determinants are well defined.
\end{corrolary}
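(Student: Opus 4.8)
The plan is to reduce the statement to the case of comparable projections and then to invoke the Jacobi--Dodgson identity (Proposition \ref{5:JD_form}) twice, each time applied to an operator that genuinely differs from the identity by a trace class operator. Since $P_1,P_2$ commute, $P_1P_2$ is an orthogonal projection with $P_1P_2\le P_1$ and $P_1P_2\le P_2$; granting that the compressions attached to $P_1P_2$ are again of determinant class (automatic in the application, where the projections $\chi_{[0,R]}$ are already nested), it suffices to prove \eqref{5:JD_ext_eq} for two nested projections $P\le P'$ and then combine the pairs $(P_1P_2,P_1)$ and $(P_1P_2,P_2)$. So fix $P\le P'$, set $R=P'-P$, $Q=I-P$, $Q'=I-P'$, and split $\mathcal H=\operatorname{ran}P\oplus\operatorname{ran}R\oplus\operatorname{ran}Q'$. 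Writing $D(\Pi)=\det(\Pi A\Pi)\big/\det((I-\Pi)A^{-1}(I-\Pi))$, the goal is $D(P)=D(P')$.

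First I would extract two auxiliary identities. The operator $A'=P'AP'+Q'$ equals $I$ plus $P'(A-I)P'$, which is trace class because $\det(P'AP')$ is assumed well defined, and $A'$ is invertible (its $\operatorname{ran}P'$ block has nonzero determinant, and it is the identity on $\operatorname{ran}Q'$). Applying Proposition \ref{5:JD_form} to $A'$ with the projection $P$, and using $PA'P=PAP$, $\det A'=\det(P'AP')$ together with $(I-P)A'^{-1}(I-P)=R(P'AP')^{-1}R\oplus I$ (the inverse taken within $\operatorname{ran}P'$, the identity acting on $\operatorname{ran}Q'$), gives
\begin{equation*}
	\det(PAP)=\det(P'AP')\,\det\!\big(R(P'AP')^{-1}R\big).
\end{equation*}
The symmetric choice $B'=QA^{-1}Q+P$ is $I$ plus the trace class operator $Q(A^{-1}-I)Q$ and is invertible; Proposition \ref{5:JD_form} applied to $B'$ with the projection $Q'$ yields in the same way
\begin{equation*}
	\det(Q'A^{-1}Q')=\det(QA^{-1}Q)\,\det\!\big(R(QA^{-1}Q)^{-1}R\big).
\end{equation*}
Dividing $D(P)$ by $D(P')$ and substituting these two relations, all the full compressions cancel and $D(P)=D(P')$ becomes equivalent to
\begin{equation*}
	\det\!\big(R(P'AP')^{-1}R\big)\cdot\det\!\big(R(QA^{-1}Q)^{-1}R\big)=1 ,
\end{equation*}
an identity between two operators on the single block $\operatorname{ran}R$, each of the form identity plus trace class (since $(P'AP')^{-1}-I$ and $(QA^{-1}Q)^{-1}-I$ are trace class).

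To finish I would show that these two compressions are mutually inverse on $\operatorname{ran}R$, so that the product of their determinants is $\det I=1$ by multiplicativity. This is the step where $AA^{-1}=I$ is used, and I would argue by pre-images rather than by Schur complement formulae, so as not to require invertibility of the individual diagonal blocks. Given $u\in\operatorname{ran}R$, let $w\in\operatorname{ran}P'$ solve $(P'AP')w=u$; then $Aw=u+s$ with $s\in\operatorname{ran}Q'$, so $w=A^{-1}(u+s)$ has no $\operatorname{ran}Q'$ component and $R(P'AP')^{-1}R\,u=Rw$. Putting $w'=u+s\in\operatorname{ran}Q$, one computes $(QA^{-1}Q)w'=QA^{-1}w'=Qw=Rw$, hence $R(QA^{-1}Q)^{-1}R\,(Rw)=Rw'=u$; thus $R(QA^{-1}Q)^{-1}R$ is a one-sided, hence (by symmetry) two-sided, inverse of $R(P'AP')^{-1}R$. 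The hard part is exactly this middle-block identity together with the surrounding bookkeeping: because the off-diagonal blocks of $A$ are merely bounded, one must check at each stage that the operators fed into Proposition \ref{5:JD_form} are determinant class and invertible and that the compressions appearing are trace class perturbations of the identity; once the pre-image computation above is in place, continuity and multiplicativity of the Fredholm determinant close the proof.
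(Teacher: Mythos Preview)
Your route is quite different from the paper's. The paper gives a short approximation argument: since $P_1,P_2$ commute, pick a joint orthonormal eigenbasis $\{e_i\}$ and let $R_n$ project onto $\operatorname{span}(e_1,\dots,e_n)$. Because $K=A-I$ is compact, $R_nKR_n\to K$ in norm, so $I+R_nKR_n$ is eventually invertible; as $R_nKR_n$ has finite rank, the classical Jacobi--Dodgson identity (Proposition~\ref{5:JD_form}) applies and makes \emph{both} ratios equal to $\det(R_nAR_n)$, after which one passes to the limit $n\to\infty$. No block decompositions, no auxiliary operators $A',B'$, and no middle-block inverse identity are needed; all of the bookkeeping you rightly identify as ``the hard part'' simply disappears.

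For nested projections your argument is essentially correct, and the Schur-type fact you establish (that $R(P'AP')^{-1}R$ and $R(QA^{-1}Q)^{-1}R$ are mutual inverses on $\operatorname{ran}R$) is a nice observation. But the reduction to the nested case via $P_1P_2$ is a genuine gap at the stated generality: you need $(I-P_1P_2)(A^{-1}-I)(I-P_1P_2)$ to be trace class, and writing $I-P_1P_2=Q_1+P_1Q_2$ produces the cross-term $Q_1(A^{-1}-I)Q_2$, which the hypotheses (only the \emph{diagonal} compressions $Q_i(A^{-1}-I)Q_i$ trace class) do not control. You flag this and correctly note that in the paper's application the projections $\chi_{[0,R_1]},\chi_{[0,R_2]}$ are already nested, so the reduction step is vacuous there. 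A smaller point: your nested-case argument tacitly uses $\det(P'AP')\neq 0$ and $\det(QA^{-1}Q)\neq 0$ in order to form the inverses $(P'AP')^{-1}$ and $(QA^{-1}Q)^{-1}$; this would require a separate perturbation/continuity patch. The paper's finite-rank approximation sidesteps both issues at once.
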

\begin{remark}
	The statement does not require $A-I$ to be trace class.
\end{remark}
\begin{proof}
	Let $A = I + K$, $K$ is Hilbert-Schmidt. Choose joint orthogonal basis 
	$\{e_i\}_{i\in\N}$ of eigenvectors of $P_1, P_2$. Let $R_n$ 
	be an operator of orthogonal projection on 
	$\text{span}(\{e_i\}_{i\in 1..n})$. Then $R_n\mathcal{H}$ is 
	a separable Hilbert space. Since $P_i, Q_i$ commute with $R_n$, 
	they remain orthogonal projectors in $R_n\mathcal{H}$. 
	
	Since $K$ is compact, $R_nKR_n \to K$ in operator norm. 
	This implies that for all large enough $n$ the operator 
	$I + R_n K R_n$ is invertible. The operator $R_nKR_n$ is 
	finite-dimensional, so applying Proposition \ref{5:JD_form} we have
	\begin{equation}\label{5:JD_eq_finite}
		\frac{\det(R_nP_1AP_1R_n)}{\det(R_nQ_1A^{-1}Q_1R_n)}=
		\frac{\det(R_nP_2AP_2R_n)}{\det(R_nQ_2A^{-1}Q_2R_n)} = \det(R_nAR_n).
	\end{equation}
 We next show that $(R_nAR_n)^{-1}\to R_nA^{-1}R_n\to 0$ as $n\to \infty$ in trace norm. We have for the difference
 \[
 (R_n+R_nKR_n)^{-1}R_n(I+K)^{-1}R_n = (R_nKR_n)^2(R_n+R_nKR_n)^{-1}-R_nK^2(I+K)^{-1}R_n,
 \]
 where $(R_n + R_nKR_n)^{-1}\to (I+K)^{-1}$, $R_n\to I$ strongly and $(R_nKR_n)^2\to K$, $R_nK^2\to K^2$ in trace norm since $K$ is Hilbert-Schmidt. Therefore, we have
 \[
\lim_{n\to\infty}\det(Q_i(R_nAR_n)^{-1}Q_i)=\lim_{n\to\infty}\det(R_nQ_iA^{-1}Q_iR_n)=\det(Q_iA^{-1}Q_i).
 \]
 The equality \eqref{5:JD_ext_eq} follows by taking limits in relation \eqref{5:JD_eq_finite} as $n\to\infty$, since $\det(R_nP_iAP_iR_n)\to \det(P_iAP_i)$.
\end{proof}

Let us extend definition \eqref{1:bess_def} to all 
functions from $L_\infty (\R_+)$. Introduce the Hankel transform 
$H_\nu$ on $L_2(\R_+)$ by the following formula
\[
	H_\nu f(\lambda) = \int_0^\infty \sqrt{\lambda x}J_\nu (\lambda x) f(x)dx.
\]
As the Fourier transform, we firstly define it on $L_1(\R_+)\cap L_2(\R_+)$, 
prove that it is an isometry and extend the operator by continuity. 
In particular, we have $H_\nu^* = H_\nu^{-1} = H_\nu$. 
Now for $b\in L_\infty(\R_+)$ we define the Bessel operator to be
\begin{equation}\label{5:bess_def_inf}
	B_b = H_\nu b H_\nu.
\end{equation}
This definition coincides with definition \eqref{1:bess_def} for 
$b\in L_1(\R_+)\cap L_\infty(\R_+)$, but now it is clear that 
$\mathbf{B}:f\to B_f$, $L_\infty(\R_+)\to \mathfrak{B}(L_2(\R_+))$ 
is a Banach algebra homomorphism and in particular that $B_{e^b}^{-1} = B_{e^{-b}}$.

Next we want to show that for $b\in \mathcal{B}$ the operator 
$B_b - W_b$ is Hilbert-Schmidt. As shown by Lemma \ref{2:bess_diff_est}, 
$\chi_{[1, \infty)}(B_b - W_b)\chi_{[1, \infty)}$ is trace class. 
The operator $\chi_{[0, 1]}(B_b - W_b)\chi_{[0, 1]}$ is trace class 
by Theorem \ref{3_0:mercer_th}. The following statement may be directly verified.
\begin{lemma} [{\cite[Lemmata 3.1, 3.2]{Basor_2003}}]
	For $b\in H_1(\R_+)\cap L_1(\R_+)$ we have 
	that $\chi_{[0, 1]}B_b$ and $\chi_{[0, 1]}W_b$ are Hilbert-Schmidt.
\end{lemma}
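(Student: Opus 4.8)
The plan is to treat the two operators separately, in each case exhibiting the integral kernel and checking that it lies in $L_2$ of the relevant strip, since an integral operator belongs to $\mathcal{J}_2$ exactly when its kernel is square-integrable.

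For $\chi_{[0,1]}W_b$ the argument is short. Embedding $H_1(\R_+)$ into $H_1(\R)$ gives $\norm{b}_{H_{1/2}}<\infty$, and since $(1+\abs{\lambda})\hat b\in L_2$ the Cauchy--Bunyakovsky--Schwarz inequality yields $\hat b\in L_1$; hence Lemma \ref{3:WH_kernels} applies and $W_b$ is the integral operator with kernel $\hat b(x-y)$. Then
\[
	\norm{\chi_{[0,1]}W_b}_{\mathcal{J}_2}^2 = \int_0^1\int_0^\infty \abs{\hat b(x-y)}^2\,dy\,dx \le \int_0^1\int_\R \abs{\hat b(s)}^2\,ds\,dx = \norm{\hat b}_{L_2}^2,
\]
which is finite because $b\in L_2(\R_+)$.

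For $\chi_{[0,1]}B_b$ I would use the factorization $B_b=H_\nu b H_\nu$ from \eqref{5:bess_def_inf}. Since $H_\nu$ is unitary and $\mathcal{J}_2$ is a two-sided ideal, it suffices to show $\chi_{[0,1]}H_\nu b\in\mathcal{J}_2$; its kernel is $\chi_{[0,1]}(\lambda)\mathfrak{J}(\lambda x)b(x)$, and after the substitution $u=\lambda x$ in the inner integral its Hilbert--Schmidt norm takes the form
\[
	\norm{\chi_{[0,1]}H_\nu b}_{\mathcal{J}_2}^2 = \int_0^\infty \abs{b(x)}^2\,\Phi(x)\,dx, \qquad \Phi(x)=\frac1x\int_0^x\abs{\mathfrak{J}(u)}^2\,du.
\]
It then remains to bound $\Phi$ and integrate against $\abs{b}^2$, using that $\chi_{[0,1]}B_b=(\chi_{[0,1]}H_\nu b)H_\nu$ inherits membership in $\mathcal{J}_2$.

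The main obstacle is the behaviour of $\Phi$ near the origin, which is precisely where the uniform bound \eqref{4:bessel_uniform} is too crude (it only gives $\abs{\mathfrak{J}(u)}^2\lesssim u^{-1}$, whose integral diverges at $0$). I would instead invoke the standard large-argument asymptotic $\mathfrak{J}(u)\to\sqrt{2/\pi}\cos(u-\phi_\nu)$ to get $\Phi(x)\le C$ for $x\ge 1$, and the small-argument asymptotic $\mathfrak{J}(u)\sim u^{\nu+1/2}/(2^\nu\Gamma(\nu+1))$ to get $\Phi(x)\le C x^{2\nu+1}$ for $x\le 1$. The hypothesis $\nu>-1$ guarantees $2\nu+1>-1$, so $x^{2\nu+1}$ is integrable at the origin; combined with $b\in H_1(\R_+)\hookrightarrow L_\infty(\R_+)$ this makes $\int_0^1\abs{b}^2\Phi$ finite, while the tail $\int_1^\infty\abs{b}^2\Phi$ is dominated by $\norm{b}_{L_2}^2$. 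This establishes $\chi_{[0,1]}H_\nu b\in\mathcal{J}_2$ and hence $\chi_{[0,1]}B_b\in\mathcal{J}_2$, completing the proof.
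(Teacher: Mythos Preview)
Your argument is correct. The paper does not supply its own proof here: it simply states that the lemma ``may be directly verified'' and refers to \cite[Lemmata~3.1, 3.2]{Basor_2003}. Your computation is exactly such a direct verification --- bounding the $L_2$-norm of the kernel $\hat b(x-y)$ on the strip $[0,1]\times\R_+$ for the Wiener--Hopf part, and factoring $\chi_{[0,1]}B_b=(\chi_{[0,1]}H_\nu b)H_\nu$ together with the two-regime bound on $\Phi(x)=x^{-1}\int_0^x\abs{\mathfrak{J}(u)}^2\,du$ for the Bessel part --- and matches the spirit of the cited lemmata in Basor--Ehrhardt.
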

This concludes that $B_b - W_b$ is Hilbert-Schmidt.
\begin{proof}[Proof of Lemma \ref{2:det_relation}]
	For arbitrary $R_1, R_2 >0$ we put $P_1 = \chi_{[0, R_1]}$, $P_2 = \chi_{[0, R_2]}$, 
	$A = W_{e^{-b_+}}B_{e^b}W_{e^{-b_-}}$. By the first statement of 
	Proposition \ref{3:WH_properties} and from the extended definition of the 
	Bessel operator we have that $A$ is invertible and the inverse is 
	$W_{e^{b_-}}B_{e^{-b}}W_{e^{b_+}}$. We also have that 
	$Q_1 = \chi_{[R_1, \infty)}$, $Q_2 = \chi_{[R_2, \infty)}$. 
	Using second and third statements of Proposition 
	\ref{3:WH_properties} write the following 
	\begin{equation}\label{5:denominator_expr}
		Q_i A^{-1} Q_i = \chi_{[R_i, \infty)} + 
		\chi_{[R_i, \infty)}W_{e^{b_-}}\chi_{[R_i, \infty)}\mathcal{R}_{e^{-b}}
		\chi_{[R_i, \infty)}W_{e^{b_+}}\chi_{[R_i, \infty)},
	\end{equation}
	where $\mathcal{R}_b = B_b - W_b$. Hence by assumptions and due 
	to the shown above fact that $\mathcal{R}_{e^{-b}}$ is compact 
	we can apply Corollary \ref{5:JD_ext} to obtain
	\begin{equation}\label{5:det_class_expr}
		\frac{\det(\chi_{[0, R_1]}W_{e^{-b_+}}B_{e^{b}}W_{e^{-b_-}}
			\chi_{[0, R_1]})}{\det(\chi_{[R_1, \infty)}W_{e^{b_-}}B_{e^{-b}}
			W_{e^{b_+}}\chi_{[R_1, \infty)})} =
		\frac{\det(\chi_{[0, R_2]}W_{e^{-b_+}}B_{e^{b}}W_{e^{-b_-}}
			\chi_{[0, R_2]})}{\det(\chi_{[R_2, \infty)}W_{e^{b_-}}
			B_{e^{-b}}W_{e^{b_+}}\chi_{[R_2, \infty)})}= Z(b),
	\end{equation}
	where all of determinants are well defined by assertions.
\end{proof}

Recall the asymptotic result of Basor and Ehrhardt.
\begin{theorem} [
	{\cite[Theorem~1.1]{Basor_2003}}]\label{5:bessel_asymp}
	Suppose the function $b\in L_1(\R_+)\cap L_\infty(\R_+)$ 
	satisfies the following conditions
	\begin{itemize}
		\item it is continuous and piecewise 
		$C^2$ on $[0, \infty)$, and $\lim_{t\to\infty}b(t)=0$.
		\item $(1+t)^{-1/2}b'(t)\in L_1(\R_+)$, $b''(t)\in L_1(\R_+)$.
	\end{itemize}
	Then the following asymptotic formula holds as $R\to\infty$
	\begin{equation}
		\det(\chi_{[0, R]}B_{e^{b}}\chi_{[0, R]}) = 
		\exp(Rc_1^{\mathcal{B}}(b) + c_2^{\mathcal{B}}(b) + 
		c_3^{\mathcal{B}}(b))Q_R^{\mathcal{B}}(b), 
		\quad Q_R^{\mathcal{B}}(b)\to 1,
	\end{equation}
	where $c_i^\mathcal{B}(b)$ are given as in Theorem \ref{1:BO_bessel}.
\end{theorem}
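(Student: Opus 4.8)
The plan is to reduce the statement, by means of the identities already established, to the evaluation of a single $R$-independent constant, and then to compute that constant. By Lemma \ref{2:bessel_div} we have $\det(\chi_{[0,R]}B_{e^b}\chi_{[0,R]}) = e^{Rc_1^{\mathcal{B}}(b)}\det(\chi_{[0,R]}W_{e^{-b_+}}B_{e^b}W_{e^{-b_-}}\chi_{[0,R]})$; Lemma \ref{2:bess_diff_est} supplies the trace-class hypothesis required by Lemma \ref{2:det_relation}, so the second factor equals $Z(b)\,Q_R^{\mathcal{B}}(b)$ with $Z(b)$ independent of $R$. The same estimate, read through the representation \eqref{5:denominator_expr}, shows that the trace norm of the perturbation defining $Q_R^{\mathcal{B}}(b)$ is $O(R^{-1/2})$, so $Q_R^{\mathcal{B}}(b)\to 1$. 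Hence the asymptotic formula holds with leading factor $\exp(Rc_1^{\mathcal{B}}(b))\,Z(b)$, and the entire content of the theorem is the identity $Z(b)=\exp(c_2^{\mathcal{B}}(b)+c_3^{\mathcal{B}}(b))$. Since these lemmas are stated for $b\in\mathcal{B}$, I would first argue for such $b$ and then recover the stated hypotheses by approximating in the relevant norm, using continuity of the determinant.

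To identify $Z(b)$ I would deform along the segment $s\mapsto sb$, $s\in[0,1]$, and set $\zeta(s)=\log Z(sb)$. This is legitimate because, for any fixed $R$, the numerator and denominator of the ratio \eqref{2:det_relation_eq} are Fredholm determinants of trace-class perturbations of the identity on $L_2[0,R]$ and on $L_2[R,\infty)$, hence smooth in $s$, while their ratio is $Z(sb)$ by Lemma \ref{2:det_relation}. As $Z(0)=1$ we have $\zeta(0)=0$, so it suffices to compute $\zeta'(s)$ and integrate. Differentiating $\log$ of the ratio and using that $\mathbf{B}$ and $\mathbf{W}$ are homomorphisms, $\zeta'(s)$ becomes a difference of traces of explicit trace-class operators built from $b$, from $B_{sb}-W_{sb}$, and from the Wiener-Hopf factors; the diagonal-trace calculus of Section 4 and the kernel identity \eqref{4:bessel_formula} are the tools for evaluating them.

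I expect two contributions to survive. The first is quadratic in $b$ and reproduces $c_3^{\mathcal{B}}(b)=\tfrac12\int_{\R_+}x\hat{b}(x)^2\,dx$; this is the exact analogue of the sine-kernel case and can be extracted, as in Step 3 of the outline of Theorem \ref{2:BO_identity}, from a commutator trace via the Helton-Howe formula (Theorem \ref{3_0:HH_th}), noting that for the even extension $\hat{b}(-x)=\hat{b}(x)$, so that $c_3^{\mathcal{B}}$ coincides with $c_2^{\mathcal{S}}$. The second is linear in $b$ and localized at the hard edge, giving $c_2^{\mathcal{B}}(b)=-\tfrac{\nu}{2}b(0)$; this is the genuinely Bessel term, absent for the sine kernel. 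It reflects the $O(x^{-1})$ correction in the Bessel asymptotics \eqref{4:bess_asymptotic}, which carries the index $\nu$, and in the trace it couples to the value $b(0)$ through the constant part of $b$ that is separated off (as in the decomposition of Section 7) and resummed using the singularity $-\sin(2\phi_\nu)/(\pi(x+y))$ of \eqref{4:bessel_formula}.

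The main obstacle is precisely the clean isolation of the linear term $-\tfrac{\nu}{2}b(0)$. Because $W_{e^{-b_+}}B_{e^b}W_{e^{-b_-}}-I$ is not known to be trace class, $Z(b)$ is not literally a Fredholm determinant of a full-line operator, so the deformation must be carried out on the two truncated determinants at finite $R$, where the $\nu$-dependent edge contribution emerges only after an $R$-uniform analysis of the diagonal of the relevant resolvent kernel near $t=0$; disentangling this finite constant from the extensive term $Rc_1^{\mathcal{B}}(b)$ and from the vanishing factor $Q_R^{\mathcal{B}}(b)$ is the delicate point. As a consistency check, in the integrable cases $\nu=\pm\tfrac12$, where $B_f=W_f\pm H_f$, the computation collapses to a Hankel determinant and must return the Basor-Ehrhardt-Widom value, matching \eqref{1:pmot_sp}.
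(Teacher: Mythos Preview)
The paper does not prove this statement at all: Theorem~\ref{5:bessel_asymp} is quoted verbatim from Basor--Ehrhardt \cite{Basor_2003} and used as a black box. In the paper's logic the flow is the reverse of yours: Lemmata~\ref{2:bessel_div}, \ref{2:det_relation}, \ref{2:bess_diff_est} establish the identity \eqref{1:bessel_ident} with an unknown $R$-independent constant $Z(b)$ and show $Q_R^{\mathcal{B}}(b)\to 1$; then the cited Theorem~\ref{5:bessel_asymp} is invoked to read off $Z(b)=\exp(c_2^{\mathcal{B}}(b)+c_3^{\mathcal{B}}(b))$. Your proposal attempts to go the other way---to compute $Z(b)$ directly and thereby rederive the Basor--Ehrhardt asymptotics. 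That would be a genuinely different, self-contained argument, and if carried through it would render the citation unnecessary.

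There are, however, two real gaps. First, the heart of your plan---the evaluation of $Z(b)$ via $\zeta'(s)$---is only a sketch. You correctly anticipate that the quadratic piece $c_3^{\mathcal{B}}(b)$ should fall out of a Helton--Howe/Widom-type commutator trace, but the linear edge term $-\tfrac{\nu}{2}b(0)$ does not: the deformation has to be done on the truncated determinants at finite $R$ (since $W_{e^{-b_+}}B_{e^b}W_{e^{-b_-}}-I$ is not known to be trace class), and extracting an $R$-independent $\nu$-dependent constant from a diagonal trace that also carries the extensive term $R\hat b(0)$ requires an analysis you have not supplied. This is precisely the nontrivial computation that Basor--Ehrhardt perform in \cite{Basor_2003}, and you have not indicated a mechanism that avoids it. Second, your reduction uses Lemmata~\ref{2:bessel_div}--\ref{2:bess_diff_est}, which are stated for $b\in\mathcal{B}$; the hypotheses of Theorem~\ref{5:bessel_asymp} are strictly weaker (only piecewise $C^2$, $b''\in L_1$, etc.). Your proposed fix---approximate in ``the relevant norm'' and pass to the limit by continuity of the determinant---does not work as written: the asymptotic statement involves a factor $e^{R\hat b(0)}$, so any approximation must be uniform in $R$, and there is no obvious topology in which $\mathcal{B}$ is dense in the Basor--Ehrhardt class while the full expression $e^{-R\hat b(0)}\det(\chi_{[0,R]}B_{e^b}\chi_{[0,R]})$ is continuous uniformly in $R$. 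At best your argument, once the computation of $Z(b)$ is completed, proves the theorem only for $b\in\mathcal{B}$.
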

Clearly assumptions of Theorem \ref{1:BO_bessel} are more 
restrictive by Lemma \ref{4:B_space_prop}. The following 
lemma establishes that $b\in \mathcal{B}$ is sufficient 
for $e^{-b}$ to satisfy conditions of Lemma \ref{2:bess_diff_est}.

\begin{lemma}\label{5:B_space_prop_pl}
	Let $b\in\mathcal{B}$. Then we have that $e^b - 1\in L_1\cap L_\infty$. 
	There exists a constant $C$ such that for any $b\in\mathcal{B}$ we have
	\[
	\normB{e^b}\le Ce^{\norm{b}_{L_\infty}}(1 + \norm{xb'(x)}_{L_\infty}^2 + 
	\normB{b}^2)\normB{b}.
	\]
\end{lemma}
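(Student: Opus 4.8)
The plan is to reduce the claim to the single function $g=e^{b}-1$. Each of the four constituents of $\normB{\cdot}$ is, for integer order, a constant multiple of an $L_2$-norm of a derivative (of $f$, of $xf$, or of $x^2f$), and each of these annihilates the constant function; since $\normB{\cdot}$ is therefore a seminorm with $\normB{1}=0$, we get $\normB{e^{b}}=\normB{g}$, which is well defined because $g\in L_1(\R_+)\cap L_\infty(\R_+)\subset L_2(\R_+)$. The inclusion $g\in L_1\cap L_\infty$ follows from the pointwise bound $\abs{e^{b(x)}-1}\le\abs{b(x)}e^{\norm{b}_{L_\infty}}$ together with $b\in L_1\cap L_\infty$: here $b\in L_1$ is part of the $\mathcal{B}$-norm, while $b\in L_\infty$ comes from the Agmon inequality $\norm{b}_{L_\infty}^2\le 2\norm{b}_{L_2}\norm{b'}_{L_2}$ (legitimate since $b(t)\to0$ by Lemma~\ref{4:B_space_prop}), giving $\norm{b}_{L_\infty}\le C\norm{b}_{\mathcal{B}}$.

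First I would record three auxiliary bounds, each dominated by $\normB{b}$: the interpolation inequality $\normH{2}{b}^2\le\normH{1}{b}\normH{3}{b}$ (Cauchy--Bunyakovsky--Schwarz on the Fourier side); the Agmon bound $\norm{b'}_{L_\infty}^2\le 2\norm{b'}_{L_2}\norm{b''}_{L_2}$, which is valid because $b'(t)\to0$ by the third statement of Lemma~\ref{4:B_space_prop} and which yields $\norm{b'}_{L_\infty}\le C\normB{b}$; and the identity $xb''=(xb)''-2b'$, which gives $\norm{xb''}_{L_2}\le C\normB{b}$.

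Then I would differentiate, using $g'=b'e^{b}$, $g''=(b''+(b')^2)e^{b}$ and $g'''=(b'''+3b'b''+(b')^3)e^{b}$, and expand the four constituents of $\normB{g}$ by the Leibniz rule. After factoring out $e^{b}$, whose modulus is at most $e^{\norm{b}_{L_\infty}}$, each constituent is a sum of a linear part and nonlinear products. The decisive point is that the linear parts reassemble exactly into the weighted derivatives built into $\normB{b}$: one checks $(xg)''=e^{b}\bigl((xb)''+x(b')^2\bigr)$ and $(x^2g)'''=e^{b}\bigl((x^2b)'''+3x^2b'b''+x^2(b')^3+6x(b')^2\bigr)$, so the linear contributions of $g',g''',(xg)'',(x^2g)'''$ are controlled by $\normH{1}{b},\normH{3}{b},\normH{2}{xb},\normH{3}{x^2b}$ respectively, hence by $\normB{b}$.

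Each nonlinear product is then estimated by $\norm{uv}_{L_2}\le\norm{u}_{L_\infty}\norm{v}_{L_2}$ after factoring to expose $xb'$: I would use $x^2b'b''=(xb')(xb'')$, $x^2(b')^3=(xb')^2b'$, $x(b')^2=(xb')b'$, and treat $b'b''$ and $(b')^3$ directly. Combined with the auxiliary bounds, the nonlinear parts contribute terms of order $\normB{b}^2$, $\normB{b}^3$, $\norm{xb'}_{L_\infty}\normB{b}$ and $\norm{xb'}_{L_\infty}^2\normB{b}$. Summing everything, factoring out one power of $\normB{b}$, and using $\norm{xb'}_{L_\infty}\le1+\norm{xb'}_{L_\infty}^2$ together with $\normB{b}\le1+\normB{b}^2$ to absorb the intermediate powers yields the asserted estimate. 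The main obstacle is the bookkeeping for the top-order term $(x^2g)'''$: recognising that its linear part collapses to $(x^2b)'''$ and that the nonlinear remainder factors cleanly through $xb'$ is exactly what makes the four-term definition of $\normB{\cdot}$ and the appearance of $\norm{xb'}_{L_\infty}^2$ in the bound fit together.
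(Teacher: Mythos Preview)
Your proposal is correct and follows essentially the same route as the paper: compute $(e^b)'$, $(e^b)'''$, $(xe^b)''$, $(x^2e^b)'''$, factor out $e^b$, recognise the linear parts as exactly $(b)'$, $(b)'''$, $(xb)''$, $(x^2b)'''$, and estimate each nonlinear remainder by peeling off factors of $b'$ or $xb'$ in $L_\infty$. The only cosmetic difference is that the paper bounds $\norm{b'}_{L_\infty}$ via $\int\abs{\lambda}\abs{\hat b(\lambda)}\,d\lambda\le\normH{1}{b}+\normH{2}{b}$ rather than through the Agmon inequality you use, and it does not spell out the interpolation $\normH{2}{b}^2\le\normH{1}{b}\normH{3}{b}$ explicitly; both lead to the same conclusion.
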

\begin{proof}
	The first statement follows directly from $L_\infty$ being a Banach 
	algebra with pointwise multiplication and
	\[
	\norm{e^b - 1}_{L_1} \le \norm{b}_{L_1}\norm*{\frac{e^b-1}{b}}_{L_\infty}.
	\]
	
	For the estimate we immediately have
	\[
	\normH{1}{e^b} \le e^{\norm{b}_{L_\infty}}\normH{1}{b}.
	\]
	The third derivative of $e^b$ is
	\[
	(e^b)''' = e^b(b''' + (b')^3 + 3b'b'').
	\]
	Observe that 
	$$\norm{b'}_{L_\infty} \le \norm{\lambda 
		\hat{b}(\lambda)}_{L_1} \le \normH{1}{b} + \normH{2}{b}$$
	This yields the following estimate
	\[
	\normH{3}{e^b} \le e^{\norm{b}_{L_\infty}}
	(\normH{3}{b} + \normB{b}^2\normH{1}{b} + 3\normB{b}\normH{2}{b}).
	\]
	
	We next write the second derivative of $xe^{b(x)}$
	\[
	(xe^{b(x)})'' = e^{b(x)}((xb(x))'' + x(b'(x))^2).
	\]
	Therefore the following estimate holds
	\[
	\normH{2}{xe^{b(x)}} \le e^{\norm{b}_{L_\infty}}
	(\normH{2}{xb(x)} + \norm{xb'(x)}_{L_\infty}\normH{1}{b}).
	\]
	
	The third derivative of $x^2e^{b(x)}$ is
	\[
	(x^2e^{b(x)})''' = e^{b(x)}(6x(b'(x))^2 +3x^2b'(x)b''(x) 
	+ (x^2b(x))''' + x^2(b'(x))^3).
	\]
	This implies that
	\[
	\normH{3}{x^2e^{b(x)}}\le e^{\norm{b}_{L_\infty}}
	(3\norm{xb'(x)}_{L_\infty}\normH{2}{xb(x)} + \normH{3}{x^2b(x)} + 
	\norm{xb'(x)}_{L_\infty}^2\normH{1}{b}).
	\]
\end{proof}
\begin{proof}[Proof of Theorem \ref{1:BO_bessel}]
	Recall the following inequality
	\[
		\abs{\det(I+K) - 1} \le \norm{K}_{\mathcal{J}_1}
		\exp(\norm{K}_{\mathcal{J}_1}).
	\]
	Also for any trace class operator $A$ and bounded $B$ we have 
	$\norm{AB}_{\mathcal{J}_1} \le \norm{A}_{\mathcal{J}_1}\norm{B}$. 
	By the definition of Wiener-Hopf operators we have $\norm{W_b} = 
	\norm{b}_{L_\infty}$. These facts and Lemmata \ref{2:bess_diff_est}, 
	\ref{5:B_space_prop_pl} with expression \eqref{5:denominator_expr} 
	prove the estimate \eqref{1:bessel_speed}. 
	
	For the derivation of $Z(b)$ observe that the denominator in 
	expression \eqref{2:det_relation_eq} approaches one by the argument above. 
	From Lemma \ref{2:bessel_div} and Theorem \ref{5:bessel_asymp} we have 
	for the numerator in \eqref{2:det_relation_eq}
	\[
		\det(\chi_{[0, R]}W_{e^{-b_+}}B_{e^{b}}W_{e^{-b_-}}\chi_{[0, R]}) 
		\to \exp(c_2^{\mathcal{B}}(b) + c_3^{\mathcal{B}}(b)), 
		\quad \text{as }R\to \infty.
	\]
	It is now clear that $Z(b) = \exp(c_2^{\mathcal{B}}(b) + 
	c_3^{\mathcal{B}}(b))$. This finishes the proof.
\end{proof}

We now proceed to the proof of Theorem \ref{1:KS_estimate}. 
Firstly we establish an estimate for the speed of convergence 
of the expectation of $S_f^R$. The argument is based on proof 
of Proposition 5.1 in \cite{Basor_2003}.

\begin{lemma}\label{5:exp_conv}
	There exists a constant $C$ such that 
	for any $b\in\mathcal{B}$ we have
	\[
	\abs*{\mathbb{E}_{J_\nu}S_f^R - (\hat{b}(0) - 
		\frac{\nu}{2}b(0))} \le \frac{C\normB{b}}{\sqrt{R}}.
	\]
\end{lemma}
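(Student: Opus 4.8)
The plan is to realise the expectation as a trace and to compare the Bessel diagonal against the Wiener--Hopf one. Differentiating the identity $\mathbb{E}_{J_\nu}e^{\lambda S_b^R}=\det(I+\chi_{[0,R]}B_{e^{\lambda b}-1}\chi_{[0,R]})$ of Proposition~\ref{6:lap_det} in the parameter at $\lambda=0$ gives
\[
	\mathbb{E}_{J_\nu}S_b^R = \tr(\chi_{[0,R]}B_b\chi_{[0,R]}) = \int_0^R B_b(x,x)\,dx,
\]
the trace being computed over the diagonal as in Section~4. Since the Wiener--Hopf kernel has constant diagonal $W_b(x,x)=\hat{b}(0)$, setting $\mathcal{R}_b=B_b-W_b$ splits this as
\[
	\mathbb{E}_{J_\nu}S_b^R = R\hat{b}(0) + \int_0^R \mathcal{R}_b(x,x)\,dx,
	\qquad \mathcal{R}_b(x,x)=\int_0^\infty\Bigl(\mathfrak{J}(xt)^2-\tfrac1\pi\Bigr)b(t)\,dt .
\]
Thus the bulk term $R\hat{b}(0)=R\,c_1^{\mathcal{B}}(b)$ is already exact, and it remains to show that $\int_0^R\mathcal{R}_b(x,x)\,dx = c_2^{\mathcal{B}}(b)+O(\normB{b}/\sqrt{R})$.

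Next I would peel off the $R$-dependence. Writing $\int_0^R\mathcal{R}_b(x,x)\,dx=\int_0^\infty\mathcal{R}_b(x,x)\,dx-\tr(\chi_{[R,\infty)}\mathcal{R}_b\chi_{[R,\infty)})$, the tail term is exactly the quantity estimated in Lemma~\ref{2:bess_diff_est}: taking $z=0$ there yields
\[
	\abs*{\tr(\chi_{[R,\infty)}\mathcal{R}_b\chi_{[R,\infty)})}
	\le \norm{\chi_{[R,\infty)}\mathcal{R}_b\chi_{[R,\infty)}}_{\mathcal{J}_1}
	\le \frac{C}{\sqrt{R}}\normB{b}.
\]
This one application of Lemma~\ref{2:bess_diff_est} supplies the required rate and, in particular, guarantees that the $R$-independent constant $\int_0^\infty\mathcal{R}_b(x,x)\,dx$ is finite; the problem therefore reduces to evaluating it and identifying it with $c_2^{\mathcal{B}}(b)=-\tfrac\nu2 b(0)$.

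Finally I would compute $\int_0^\infty\mathcal{R}_b(x,x)\,dx=-\tfrac\nu2 b(0)$. For finite $R$ Fubini is legitimate (the integrand is bounded and $b\in L_1$), and after the substitution $s=xt$ one obtains $\int_0^R\mathcal{R}_b(x,x)\,dx=\int_0^\infty \tfrac{b(t)}{t}\Phi(Rt)\,dt$ with $\Phi(u)=\int_0^u(\mathfrak{J}(s)^2-\tfrac1\pi)\,ds$; the further substitution $u=Rt$ turns this into $\int_0^\infty\tfrac{b(u/R)}{u}\Phi(u)\,du$, which should tend to $b(0)\int_0^\infty\Phi(u)\,\tfrac{du}{u}$. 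The value of this last constant is $-\tfrac\nu2$: it can be obtained from the Lommel identity $\int_0^u sJ_\nu(s)^2\,ds=\tfrac{u^2}2\bigl(J_\nu(u)^2-J_{\nu-1}(u)J_{\nu+1}(u)\bigr)$ combined with the refined asymptotics \eqref{4:bess_asymptotic}--\eqref{4:bessel_derivative} and the integral \eqref{4:bessel_formula}, and it is transparent in the solvable cases $\nu=\pm\tfrac12$, where $\mathfrak{D}\equiv0$, so $\Phi(u)=\tfrac1{2\pi}\sin(2u-2\phi_\nu)$ and the Dirichlet integral gives $-\tfrac\nu2$. The main obstacle is exactly this passage to the limit: since $\mathfrak{J}(s)^2-\tfrac1\pi$ does not decay, $\Phi(u)$ has no limit but merely oscillates with bounded amplitude, so the naive interchange of integrations diverges and $\int_0^\infty\Phi(u)\,du/u$ converges only conditionally. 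I would resolve this by splitting $b(t)=b(0)+b_0(t)$, handling the remainder through the bound $\abs{b_0(t)}\le\sqrt{t}\,\normH{1}{b}$ from \eqref{4:volt_est} together with the Riemann--Lebesgue lemma, while the $b(0)$-part is evaluated against the oscillatory tail of $\Phi$ using the Dirichlet integral and \eqref{4:bessel_formula}.
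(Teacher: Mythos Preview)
Your route to the rate is sound and genuinely different from the paper's. Both arguments subtract the Wiener--Hopf diagonal and arrive at the same integral $\int_0^R\mathcal{R}_b(x,x)\,dx=\int_0^\infty b(x/R)\bigl(B_{\chi_{[0,1]}}(x,x)-\tfrac1\pi\bigr)\,dx$, but the paper then integrates by parts once against the antiderivative $F(\xi)=-\int_\xi^\infty\bigl(B_{\chi_{[0,1]}}(t,t)-\tfrac1\pi\bigr)\,dt$, which satisfies $|F(\xi)|\le C(1+\xi)^{-1}$: the boundary term at $\xi=0$ is $-b(0)F(0)=-\tfrac\nu2\,b(0)$, and the remainder $-\tfrac1R\int_0^\infty b'(x/R)F(x)\,dx$ is bounded by $C\normH{1}{b}/\sqrt R$ directly. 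This is self-contained and actually gives the sharper constant $C\normH{1}{b}$. Your detour through Lemma~\ref{2:bess_diff_est} is a pleasant reuse of earlier work and delivers the rate without any new estimate, at the price of having to identify the limiting constant separately.

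That identification is where your sketch has a real gap. The Riemann--Lebesgue step does not apply as written: since $b(t)\to0$ one has $b_0(t)\to-b(0)$, hence $b_0(t)/t\sim-b(0)/t$ at infinity, so $b_0(t)/t\notin L_1(\R_+)$ whenever $b(0)\neq0$, and the integral $\int_0^\infty\tfrac{b_0(t)}{t}\Phi(Rt)\,dt$ is only conditionally convergent. The cleanest repair is to notice that your $\Phi(u)/u$ equals $B_{\chi_{[0,1]}}(u,u)-\tfrac1\pi=F'(u)$, so a single integration by parts in $\int_0^\infty b(u/R)\,\Phi(u)u^{-1}\,du$ is precisely the paper's computation; it identifies the constant (and re-derives the rate), after which your appeal to Lemma~\ref{2:bess_diff_est} becomes redundant. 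Alternatively, one can push your oscillatory argument through by using the finer asymptotic $\Phi(u)=c\sin(2u-2\phi_\nu)+O(u^{-1})$ rather than mere boundedness, splitting the $O(u^{-1})$ part off absolutely and then integrating the sinusoidal part by parts once; but this is more work than the paper's single integration by parts.
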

\begin{proof}
	The expression for expectation of additive functional is
	\[
	\mathbb{E}_{J_\nu}S_f^R = \int_{\R_+}f(x/R)
	B_{\chi_{[0, 1]}}(x, x)dx.
	\]
	Recall that the Bessel kernel $B_{\chi_{[0, 1]}}(x, y)$ 
	is defined on the diagonal by the following formula
	\[
	B_{\chi_{[0, 1]}}(x, x) = \frac{x}{2}(J_\nu^2(x) - 
	J_{\nu+1}(x)J_{\nu-1}(x)).
	\]
	The asymptotic of the diagonal as $x\to\infty$ is
	\[
	B_{\chi_{[0, 1]}}(x, x) = \frac{1}{\pi} + 
	\frac{\sin(2(x-\phi_\nu))}{x} + O(x^{-2}).
	\]
	This implies that the following function is well defined
	\[
	F(\xi) = -\int_\xi^\infty\left (\frac{t}{2}(J_\nu^2(t) - 
	J_{\nu+1}(t)J_{\nu-1}(t)) - \frac{1}{\pi}\right)dt.
	\]
	It is clear that $F(\xi) = O(\xi^{-1})$ as $\xi\to\infty$. 
	We therefore have $\abs{F(\xi)} \le C(1+\xi)^{-1}$ for 
	some constant $C$. We use $F$ for integration by parts
	\begin{multline}\label{5_eq:7_7}
		\int_{\R_+}f(x/R)B_{\chi_{[0, 1]}}(x, x)dx - \frac{1}{\pi}\int_{\R_+}f(x/R)dx 
		= f(x/R)F(x)\bigg\rvert_0^{\infty} - \\-\frac{1}{R}\int_{\R_+}f'(x/R)F(x)dx,
	\end{multline}
	where by Lemma \ref{4:B_space_prop} we have $f(\infty)F(\infty)=0$. 
	For the value in zero we write using $J_{\nu-1}(t) + J_{\nu+1}(t) 
	= \frac{2\nu}{t}J_\nu(t)$
	\begin{multline*}
	F(0) = \int_0^\infty \left (\frac{t}{2}(J_\nu^2(t) - 
	J_{\nu+1}(t)J_{\nu-1}(t)) - \frac{1}{\pi}\right)dt =\\= 
	\int_0^\infty\left(\frac{t}{2}(J_\nu^2(t) + J_{\nu+1}^2(t)) 
	- \frac{1}{\pi}\right)dt - \nu\int_0^\infty J_{\nu+1}(t)J_\nu(t)dt.
	\end{multline*}
	The second integral is equal to $1/2$ (see \cite[Sect.~6.512-3]{grad1994}). 
	For the first term use the following integral
	\begin{multline*}
	\int_0^T xJ_\nu^2(x)dx = \frac{T^2}{2}(J_\nu^2(T) - 
	J_{\nu+1}(T)J_{\nu-1}(T))=\\= \frac{T}{\pi}+\sin(2(T-\phi_\nu))+o(1) 
	\quad \text{as }T\to\infty.
	\end{multline*}
	Recall that $\phi_{\nu+1} = \phi_{\nu}+\frac{\pi}{2}$. 
	Therefore the term is zero. We conclude that $F(0)b(0) = 
	\frac{\nu}{2}b(0)$.
	
	What is left to do is estimate the second term in \eqref{5_eq:7_7}. 
	Using $F(\xi)\le C(1+\xi)^{-1}$ we can write
	\[
	\abs*{\int_{\R_+}f'(x)F(Rx)dx}\le \int_{\R_+}\frac{C\abs{b'(x)}}{1+Rx}dx,
	\]
	which is at most $\frac{C\normH{1}{b}}{\sqrt{R}}$. 
	This completes the proof.
\end{proof}
\begin{proof}[Proof of Theorem \ref{1:KS_estimate}]
Recall that by $F_{R, b}$ and $F_{\mathcal{N}}$ we denoted cumulative distribution functions of additive functionals $\overline{S_b^R}$ and standard Gaussian $\mathcal{N}(0, 1)$ respectively.
	Using the Feller smoothing estimate (see \cite[p.~538]{feller1966}), 
	Theorem \ref{1:BO_bessel} and Proposition \ref{6:lap_det} we have for any $T>0$
	\begin{multline}\label{5_eq:th_2}
		\sup_{x}\abs{F_{R, b} - F_{\mathcal{N}}} \le 
		\frac{24}{\sqrt{2\pi^3}T} +\\+ \frac{1}{\pi}\int_{-T}^T
		\frac{1}{\abs{k}}\abs*{e^{ikR\hat{b}(0) - ik\frac{\nu}{2}b(0)-ik
				\mathbb{E}_{J_\nu}S^R_b}Q_R^{\mathcal{B}}(kb) - 1}dk.
	\end{multline}
	By the second statement of Theorem \ref{1:BO_bessel} and Lemma 
	\ref{5:exp_conv} the expression under the integral may be 
	estimated by the following expression for some $C>0$, depending 
	only on $\normN{b_+}$ and $L(b)$
	\[
	\frac{C}{\sqrt{R}}(1 + \abs{k}^2)\exp\left(\frac{C\abs{k}}{\sqrt{R}}(1 + 
	\abs{k}^2)e^{C\abs{k}}\right).
	\]
	Observe that if $\abs{k} \le C_1\ln R$, where $C_1C <1/2$, then there 
	exists a constant $\widetilde{C}$ such that for any $R\ge 1$ the 
	expression above is at most
	\[
	\frac{\widetilde{C}(1+ (\ln R)^2)}{\sqrt{R}}.
	\]
	Therefore if we choose $T = C_1\ln R$ then the integral in 
	\eqref{5_eq:th_2} is at most
	\[
	\frac{2\widetilde{C}C_1\ln R(1+(\ln R)^2)}{\sqrt{R}}.
	\]
	And the statement of the theorem follows.
\end{proof}


\begin{thebibliography}{99}

\bibitem{Zhig2021} Z.~Bao, Y.~He,
\textit{Quantitative CLT for linear eigenvalue statistics of Wigner matricess}, 
arXiv:2103.05402.

	\bibitem{BChen2003} E. L. Basor, Y. Chen, \textit{A Note on 
		Wiener-Hopf Determinants and the Borodin-Okounkov 
		Identity}, Integral Equations Operator Theory 
		\textbf{45}(2003), 301-308.
		
	\bibitem{Basor_2003} E.L. Basor, T. Ehrhardt, 
	\textit{Asymptotics of determinants of Bessel operators}, 
	Comm. Math. Phys. \textbf{234}(2003), 491–516.
	
    \bibitem{BE_2008} E.L. Basor, T. Ehrhardt,
     \textit{Determinant computations for some classes 
     	of Toeplitz-Hankel matrices}, arXiv: 0804.3073.
     	
    \bibitem{BEW_2003} E.L. Basor, T. Ehrhardt, H. Widom, 
    \textit{On the Determinant of a Certain Wiener-Hopf + 
    	Hankel Operator}, arXiv: math/0304002.
    	
	\bibitem{BasorWidom2000} E. L. Basor, H. Widom, 
	\textit{On a Toeplitz determinant identity of Borodin 
		and Okounkov}, Integral Equations Operator Theory 
		\textbf{37}(2000), 397-401.
		
    \bibitem{BerBuf2021} S. Berezin, A. I. Bufetov , 
    \textit{On the rate of convergence in the central 
    	limit theorem for linear statistics of Gaussian, 
    	Laguerre, and Jacobi ensembles}, 
    	Pure and Applied Functional Analysis \textbf{6}(2021), 57-99.

    \bibitem{Betea_2018} D. Betea, 
    \textit{Correlations for symplectic and orthogonal 
    	Schur measures}, arXiv: 1804.08495.
    
	\bibitem{boettcher2001determinant} A. B\"{o}ttcher, 
	\textit{On the determinant formulas by Borodin,
		 Okounkov, Baik, Deift, and Rains}. In: 
		 A. Böttcher, I. Gohberg, P. Junghanns (eds), 
		 \textit{Toeplitz Matrices and Singular Integral Equations}, 
		 Operator Theory: Advances and Applications, 
		 vol. 135, Birkhäuser, Basel, 2002.
	
	\bibitem{Bttcher1990} A. B\"{o}ttcher, B. Silbermann, 
	\textit{Analysis of Toeplitz Operators}, 
	Springer Monographs in Mathematics, 2nd ed., 
	Springer Berlin, Heidelberg, 2006.
	
	\bibitem{borodin1999fredholm} A. Borodin, A. Okounkov, 
	\textit{A Fredholm determinant formula for Toeplitz determinants}, 
	Integral Equations Operator Theory \textbf{37}(2000), 386-396.
	
	\bibitem{Buf2024} A. I. Bufetov,
	\textit{The expectation of a multiplicative functional under the sine-process}, Funct. Anal. Appl. \textbf{58}(2024).
	
	\bibitem{bufetov2019sineprocess} A. I. Bufetov, 
	\textit{The sine-process has excess one}, arXiv: 1912.13454.
	
	\bibitem{RH} P. Deift, X. Zhou, 
	\textit{A steepest descent method for oscillatory Riemann-Hilbert problems. 
		Asymptotics for the MKdV equation}, Ann. of Math. \textbf{137}(1993), 295-368.
	
	\bibitem{article} T. Ehrhardt, 
	\textit{A generalization of Pincus' formula and Toeplitz 
		operator determinants}, Arch. Math. \textbf{80}(2003), 302-309.
	
	\bibitem{feller1966} W. Feller, 
	\textit{An Introduction to Probability Theory 
		and Its Applications, Vol. 2}, John Wiley \& Sons, 1966.
	
	\bibitem{grad1994} I.S. Gradshteyn, I.M. Ryzhik, 
	\textit{Table of Integrals, Series, and Products}, 
	5th ed., Acadmic Press, San Diego, 1994.
	
    \bibitem{Joh1997} K. Johansson, 
    \textit{On Random Matrices from the Compact Classical Groups}, 
    Ann. Math. \textbf{145}(1997), 519-545.
    
    \bibitem{KMS2010} J. P. Keating, F. Mezzadri, B. Singphu, 
    \textit{Rate of convergence of linear functions on the unitary group}, 
    J. Phys. A \textbf{44}(2010), 035204.
    
    \bibitem{LLW2019} G. Lambert, M. Ledoux, C. Webb, 
    \textit{Quantitive normal approximation of linear 
    	statistics of $\beta$-ensembles}, Ann. Probab. 
    	\textbf{47}(2019), 2619-2685.
    
	\bibitem{Macchi1975} O. Macchi, 
	\textit{The coincidence approach to stochastic point processes}, 
	Adv. in Appl. Probab. \textbf{7}(1975), 83-122.
	
	\bibitem{Simon2015} B. Simon, \textit{Operator Theory}, 
	Amer. Math. Soc., Providence, RI, 2015.
	
	\bibitem{simon2005orthogonal} B. Simon, 
	\textit{Orthogonal Polynomials on the Unit Circle, 
		Part 1: Classical Theory}, AMS Colloquium Series, 
		vol. 54, Amer. Math. Soc., Providence,
	RI, 2005.
	
	\bibitem{Simon2010} B. Simon, \textit{Trace Ideals and Their Applications}, 
	Mathematical Surveys and Monographs, 
	vol. 120, 2nd ed., Amer. Math. Soc., Providence, RI, 2005.
	
	\bibitem{Soshnikov_2000} A. Soshnikov, 
	\textit{Determinantal random point fields}, 
	Russian Math. Surveys \textbf{55}(2000), 923-975.
	
	\bibitem{Tracy_1994} C. Tracy, H. Widom, 
	\textit{Level spacing distributions and the Bessel kernel}, 
	Comm. Math. Phys. \textbf{161}(1994), 289-309.
	
	\bibitem{Widom1982-qq} H. Widom, 
	\textit{A trace formula for Wiener-Hopf operators}, 
	Operator Theory \textbf{8}(1982), 279-298.
\end{thebibliography}
\end{document}